\documentclass[microtype,12pt]{article}
\usepackage{custarticle}
\usepackage{comment}

\title{Bogomolny Monopoles on Asymptotically Cylindrical $3$-Manifolds}

\author{Ryosuke Kimura\footnote{\texttt{kimura.ryosuke.n0@s.mail.nagoya-u.ac.jp}}, Wenhan Wang\footnote{\texttt{wenhan-wang@outlook.com}}}

\begin{document}

\maketitle

\begin{abstract}
    We construct irreducible smooth $\SU(2)$-monopoles on asymptotically cylindrical $3$-manifolds with vanishing first and second Betti numbers, using a gluing construction. We also construct reducible singular $\U(1)$-Dirac monopoles on arbitrary asymptotically cylindrical $3$-manifolds.

\end{abstract}

\tableofcontents

\section{Introduction}\label{section: Introduction}

Over the past four decades, the study of the instanton, a special solution to the Yang-Mills equation in gauge theory, has led to major advances in the topology of $4$-manifolds. In this paper, we study its dimensional reduction, namely the (Bogomolny) monopole.

\begin{definition}[Bogomolny Monopoles]
    Let $(M,g)$ be an oriented Riemannian $3$-manifold. Let $P \to M$ be a principal $\SU(2)$-bundle, and let $\mathfrak{g}_P=P\times_{\Ad}\su(2)$ be the associated adjoint bundle. Let $A$ be a connection on $P$, and let the Higgs field $\Phi$ be a section of $\mathfrak{g}_P$. The pair $(A,\Phi)$ is called a \emph{(Bogomolny) monopole} if it satisfies the \emph{Bogomolny equation}
    \begin{equation}
        *F_A=d_A\Phi,
        \label{equation: Bogomolny equation}
    \end{equation}
    where $*$ is the Hodge star determined by the Riemannian metric $g$ and the orientation of $M$.   
\end{definition}

A critical point of the \emph{Yang-Mills-Higgs functional}
\begin{equation*}
    \mathcal{YMH}(A,\Phi) = \frac{1}{2} \int_M (\lvert F_A \rvert^2 + \lvert d_A\Phi \rvert^2 ) \vol_g
\end{equation*}
is a solution to the \emph{Yang-Mills-Higgs equation}
\begin{equation*}
    d_A^* F_A = *[d_A \Phi , \Phi], \qquad d_A^*d_A\Phi=0.
\end{equation*}
Therefore, the (Bogomolny) monopole is a special solution to the Yang-Mills-Higgs equation and minimizes the Yang-Mills-Higgs functional.

Inspired by the role played by the instanton moduli space in $4$-dimensional topology, we wish to study the monopole moduli space in $3$-dimensional topology. Taubes \cite{MR614447} first proved the existence of the $\SU(2)$-monopole with charge $k$ on $\R^3$. Atiyah and Hitchin \cite{MR934202} showed that the moduli space of centered $\SU(2)$-monopoles with charge $k$ is a $(4k-4)$-dimensional hyperk\"ahler manifold. 

However, every smooth monopole is necessarily trivial ($*F_A=d_A\Phi=0$) on a compact $3$-manifold. This suggests that we either consider more general non-compact manifolds \cite{Oliveira_2016} or turn to singular monopoles \cite{esfahani_PhD}. Therefore, in this paper, we study smooth monopoles on non-compact manifolds, namely asymptotically cylindrical $3$-manifolds.

\begin{definition}[Asymptotically Cylindrical $3$-Manifolds]
    A complete non-compact oriented Riemannian $3$-manifold $(M,g)$ is called \emph{asymptotically cylindrical (ACyl) of rate $\delta>0$}, if there exists a compact set $K \subset M$, a closed Riemannian surface $(\Sigma,g_\Sigma)$, and a diffeomorphism 
    \begin{equation*}
        \varphi: (1,+\infty)_r \times \Sigma \to M\setminus K, 
    \end{equation*}
    such that the metric $g_C=(dr)^2+g_\Sigma$ on the cylinder $C(\Sigma)=(1,\infty)_r \times \Sigma$ and its Levi-Civita connection $\nabla_C$ satisfy
    \begin{equation*}
        |\nabla^j_C(\varphi^*g-g_C)|_{g_C}= O(e^{-\delta r}), \qquad \text{as } r \to +\infty,
    \end{equation*} 
    for all $j \in \Z_{\geq 0}$. The smooth function $\rho:M \to \R_{> 0}$ is called a \emph{distance function} if it satisfies $\rho|_{M\setminus K}=r\circ \varphi^{-1}$.
    \label{definition: Asymptotically Cylindrical 3-Manifolds}
\end{definition}

If $\Sigma = \partial \overline{M}$ is not connected, then there are $l=b^0(\Sigma)$ connected components, namely $\Sigma = \bigsqcup\limits_{i=1}^l \Sigma_i$, and we say that $M$ has $l$ ends. The $i$-th end is given by $\varphi((1,+\infty) \times \Sigma_i)$.

We first prove the following theorem about the existence of $\U(1)$-Dirac monopoles in Subsection~\ref{subsection: Local Models of U(1)-Dirac Monopoles}. 
\begin{theorem}[Existence of $\U(1)$-Dirac Monopoles;Theorem~\ref{proposition:existence of harmonic function PhiD}]
    Let $(M,g)$ be an asymptotically cylindrical $3$-manifold. Given $\vec{m}^O= (m_1^O, \cdots, m_l^O) \in (\R_{\geq 0})^l$ and $(k_1^I, \cdots, k_s^I) \in \Z^s$ satisfying $\sum\limits_{i=1}^{s} k_i^I = \sum\limits_{i=1}^l k_i^O$, there exists, on a principal $\U(1)$-bundle $L \to M \setminus \{p_1,\cdots,p_n\}$, a reducible singular $\U(1)$-Dirac monopole $(a_D,\phi_D)$ with charge $k_1^I+\cdots+k_s^I$ and mass $\vec{m}^O$, whose singularities $p_1,\cdots,p_s$ have charges $k_1^I,\cdots,k_s^I$, such that $\phi_D$ has the following asymptotic expansions near the singularities and ends:
    \begin{align*}
        \left. \phi_D \right|_{B_{\varepsilon} ( p_i )} &= m_i^I - \frac{k_i^I}{4 \pi r_i} + O ( r_i ), \qquad \text{as }r_i \to 0, \\
        \left. \phi_D \right|_{U ( \Sigma_i )} &= m_i^O +\frac{k_i^O}{\vol(\Sigma_i)} \rho + O ( \rho^{-1} ), \qquad \text{as }\rho \to \infty, 
    \end{align*}
    where $r_i = \operatorname{dist} ( \cdot, p_i ) : B_{\varepsilon} ( p_i ) \setminus \{ p_i \} \to \R_{>0}$, and $\rho$ is a distance function in Definition~\ref{definition: Asymptotically Cylindrical 3-Manifolds}. 
\end{theorem}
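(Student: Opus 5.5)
Since $\U(1)$ is abelian, the Bogomolny equation \eqref{equation: Bogomolny equation} for $(a_D,\phi_D)$ reads $*F_{a_D}=d\phi_D$, so the task reduces to a problem about harmonic functions. I will build $\phi_D$ first and then recover $a_D$. If $\phi_D$ is harmonic on $M\setminus\{p_1,\dots,p_s\}$, then $*d\phi_D$ is a closed $2$-form there. Its flux through a small sphere around $p_i$ equals $k_i^I$ (up to the normalisation fixed by the $-k_i^I/(4\pi r_i)$ term), and its flux through $\{\rho\}\times\Sigma_i$ equals $k_i^O$ (since $\partial_\rho\phi_D \to k_i^O/\vol(\Sigma_i)$). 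Once these periods are integral, Chern--Weil theory gives a $\U(1)$-bundle $L$ and a connection $a_D$ with $F_{a_D}=*d\phi_D$. Monopole and Higgs-field conventions carry a factor of $i$ or $2\pi$; I will fix these so that integer charges correspond to integral Chern classes on each small sphere and each $\Sigma_i$. Integrality over all of $H_2(M\setminus\{p_j\})$ also needs checking, and I address it at the end. The remaining task is therefore to construct a harmonic $\phi_D$ with the prescribed singular and asymptotic behaviour.

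\textbf{Step 1: ansatz.} Choose cutoffs $\chi_i$ supported in $B_{2\varepsilon}(p_i)$ and equal to $1$ on $B_\varepsilon(p_i)$. Choose cutoffs $\eta_i$ on the $i$-th end, equal to $1$ for $\rho$ large and zero on $K$. Set
\begin{equation*}
  \phi_0=\sum_{i=1}^s\chi_i\Bigl(-\frac{k_i^I}{4\pi r_i}\Bigr)+\sum_{i=1}^l\eta_i\Bigl(m_i^O+\frac{k_i^O}{\vol(\Sigma_i)}\rho\Bigr).
\end{equation*}
Near $p_i$, in geodesic normal coordinates, $\Delta_g(1/r_i)=O(r_i^{-1})$, because the metric is Euclidean up to $O(r_i^2)$. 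On the ends, $\Delta_g\rho=\Delta_{g_C}r+O(e^{-\delta\rho})=O(e^{-\delta\rho})$. Hence $f:=\Delta_g\phi_0$ lies in $L^p$ near the points for every $p<3$, decays exponentially on the ends, and is smooth elsewhere.

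\textbf{Step 2: solving $\Delta u=-f$.} By the divergence theorem on $M_R=\{\rho\le R\}\setminus\bigcup B_\epsilon(p_i)$, we have $\int_M f=\lim(\text{boundary fluxes of } d\phi_0)$. Up to a common sign, this equals $\sum_i k_i^O-\sum_i k_i^I=0$ by the charge hypothesis. This vanishing is the key input. I then use Lockhart--McOwen / Melrose theory for the Laplacian on the ACyl manifold. On weighted spaces $e^{\beta\rho}L^2_{k}$ with small $\beta>0$ (allowing slight growth), $\Delta$ is Fredholm. Its cokernel is dual to the harmonic functions of decay rate $-\beta$, and those are the constants. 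Equivalently, in the decaying weight $-\beta$ there is no kernel, and the obstruction to solving $\Delta u=f$ with $u$ asymptotic to constants $c_i$ on each end plus decaying terms is exactly $\int_M f=0$. The zero-mode index jump accounts for the constants $c_i$ and the linear functions $\rho$ on each end. Near $p_i$, elliptic regularity gives $u\in W^{2,p}$, hence $C^{1,\alpha}$. So $u(p_i)$ is finite and $u=u(p_i)+O(r_i)$; this defines $m_i^I$ and gives the $O(r_i)$ term.

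\textbf{Step 3: matching the masses.} The solution $u$ is determined only up to harmonic functions bounded on the ends. Let $\mathcal H$ denote the space of bounded harmonic functions, each tending to a constant on each end. This space has dimension $l$: by Fredholm index theory it is spanned by functions with asymptotic values realising all of $\R^l$. The map to $\R^l$ is injective by the maximum principle, and surjective by solving the Dirichlet problem with the prescribed end values. I can therefore adjust $u$ so that its limiting constants vanish on every end. Then $\phi_D=\phi_0+u$ has exactly mass $m_i^O$. The rate $O(\rho^{-1})$ (in fact exponential) follows from the decay of $f$ and the spectral gap of $\Delta_{\Sigma_i}$ on nonconstant modes.

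I expect the main obstacle to be the weighted Fredholm analysis in Step 2 at the critical zero weight. There I must correctly identify the cokernel and the extended index, and handle the $\rho$-linear modes. The remaining technical point is integrality of the periods of $*d\phi_D$ over classes of $H_2(M)$ not generated by the spheres and the $\Sigma_i$. I would handle these by noting that such classes can be represented away from the ends. If necessary, I would then take $L$ with Chern class matching, modifying $a_D$ by a flat connection. The curvature class itself is determined by the fluxes, which are integers for cycles homologous to combinations of the spheres and the $\Sigma_i$.
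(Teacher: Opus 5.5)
Your Steps~1--2 follow the paper: the same cut-off ansatz, and the same flux computation giving $\int_M f\,\vol_g=0$. The paper then solves $\Delta v=-f$ directly among \emph{decaying} functions via Lemma~\ref{lemma: d*d(L(M)) = {intM u vol=0}}, so the end constants are exactly the $m_i^O$ built into the ansatz. You instead solve with slight growth and then correct the end constants in Step~3, and that correction is where the argument fails.

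Bounded harmonic functions on an ACyl manifold are constant. If $h$ is harmonic on $M$ and $h\to a_i$ on the $i$-th end, then $dh$ decays exponentially, and Green's identity gives $\int_M\lvert dh\rvert^2\vol_g=\lim_{R\to\infty}\int_{\{\rho=R\}}h\,\partial_\rho h=0$. So the ``Dirichlet problem at infinity'' you invoke has only constant solutions. The $l$-dimensional space that index theory produces is the space of harmonic functions asymptotic to $a_i+b_i\rho$, and in it the constants $a_i$ and the slopes $b_i$ are coupled. Hence the $c_i$ from Step~2 can only be shifted all together.

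For $l=1$ this is harmless: you subtract $c_1$, and your construction of $\phi_D$ goes through. For $l\ge 2$ it cannot be repaired, because the statement is false there. Two solutions $\phi_D,\tilde\phi_D$ with the same $p_j,k_j^I,k_i^O$ differ by a function harmonic on all of $M$, since the singular parts cancel. That function tends to $m_i^O-\tilde m_i^O$ on each end, so it is a constant. Therefore the differences $m_i^O-m_j^O$ are forced by the data and cannot be prescribed. For example, on $\R\times\Sigma$ without singular points every admissible $\phi_D$ is $a+bt$, which forces $m_+^O=m_-^O$.

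The paper's proof has the same defect, hidden in Lemma~\ref{lemma: d*d(L(M)) = {intM u vol=0}}. The kernel of $d^*d$ on slowly growing functions is this $l$-dimensional space, not $H^0(M)$, so $\int_M f\,\vol_g=0$ is only one of $l$ solvability conditions. Both constructions of $\phi_D$ are sound for one end, which covers the case $b^2(M)=0$ (forcing $l=1$) used later in the paper.

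Your final paragraph has a second gap. A flat connection changes holonomy, not curvature. Suppose the period of $*d\phi_D$ over a class of $H_2(M)$ not spanned by the small spheres and the $\Sigma_i$ is non-integral, in the normalisation where the small spheres have periods $k_j^I$. Then no line bundle on $M\setminus\{p_j\}$ carries a connection with curvature $*d\phi_D$. Since $\phi_D$ is fixed up to a constant, these periods are determined by the metric and the positions of the $p_j$, and in general they are not integers. This is a genuine obstruction, not bookkeeping. The paper isolates it as the class $b(p_1,\dots,p_n;[\vec m^O])\in H^2(M;\R/2\pi\Z)$ of Theorem~\ref{theorem:well definedness of b(p,[mO])} and uses $b^2(M)=0$ to kill it. Without such a hypothesis, the existence of $(L,a_D)$ does not follow.

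A smaller point concerns the behaviour near $p_i$. With the bare $1/r_i$ ansatz, $f=O(r_i^{-1})$ lies only in $L^p$ for $p<3$. In dimension three, $W^{2,p}$ with $p<3$ does not embed in $C^{1,\alpha}$, so you only get $u=u(p_i)+O(r_i^\alpha)$ for $\alpha<1$. The Ricci/scalar-curvature correction in the paper's $\tilde u_i^I$ cancels the $O(r_i^{-1})$ term, makes $f$ bounded, and yields the claimed $O(r_i)$ remainder.
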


\begin{remark}
    In order to construct the approximate solution $(A_0,\Phi_0)$, we may assume $\{p_1,\cdots,p_s\}:=\{p_1,\cdots,p_{n}\}$ with $n\geq s$, so that $k_1^I=\cdots=k_{n}^I=1$.
\end{remark}

Subsequently, we prove the following main theorem about the existence of $\SU(2)$-monopoles in the rest of this paper. 
\begin{theorem}[Existence of $\SU(2)$-Monopoles]
    Let $(M,g)$ be an asymptotically cylindrical $3$-manifold with $b^1(M)=b^2(M)=0$. Given $n \in \Z_{>0}$ and sufficiently large $m \in \R^{> 0}$, for all $\beta \in (\beta^*=\min\{-\delta,-\sqrt{\lambda_1(\Sigma)}\},0)$, there exists, on a principal $\SU(2)$-bundle $P \to M$, an irreducible smooth $\SU(2)$-monopole $(A,\Phi) = (A_0,\Phi_0) + (a,\varphi)$ with charge $n$ and mass $m$, such that the perturbation $(a,\varphi)$ is sufficiently small in the weighted Sobolev norm,
    \begin{equation*}
        \Vert (a,\varphi) \Vert_{W^{1,2}_{\beta}} \leq C m^{-\frac{7}{4}},
    \end{equation*}
    where $C >0$ is a constant independent of $m$. 
    \label{theorem: main theorem}
\end{theorem}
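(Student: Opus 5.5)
The plan is a gluing construction modelled on Oliveira's construction of monopoles on asymptotically conical manifolds, adapted to the cylindrical ends.

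\textbf{Approximate solution.} Choose $n$ distinct points $p_1,\dots,p_n\in M$ with all singular charges $k_i^I=1$, as in the Remark, and let the masses at the ends be large. Apply Theorem~\ref{proposition:existence of harmonic function PhiD} to get a Dirac monopole $(a_D,\phi_D)$ on $M\setminus\{p_i\}$, with $\phi_D$ of order $m$ away from the singular points. Embed it as a reducible $\SU(2)$-configuration on $\underline{\C}\oplus L$ (or $L\oplus L^{-1}$). Near each $p_i$, excise a ball of radius of order $m^{-1/2}$ and glue in a rescaled BPS monopole of mass $m$ centred at $p_i$. The BPS monopole converges exponentially fast, at scale $1/m$, to the Dirac monopole of charge $1$ in a suitable gauge. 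The charge-$1$ clutching of $L$ at each $p_i$ is exactly what lets $P$ extend as an $\SU(2)$-bundle over the whole of $M$. Interpolating with cutoffs gives $(A_0,\Phi_0)$.

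I would then estimate the error $e_0=*F_{A_0}-d_{A_0}\Phi_0$ in $L^2_{\beta-1}$. It is supported in the gluing annuli and has two sources:
\begin{itemize}
\item the $O(r)$ correction terms in the Dirac expansion of Theorem~\ref{proposition:existence of harmonic function PhiD}, compared against the flat model;
\item exponentially small BPS tails.
\end{itemize}
Optimising the gluing radius should give $\|e_0\|\lesssim m^{-7/4}$, possibly times a power of $m$ that the linear estimate later absorbs.

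\textbf{Linear theory.} Write $(A,\Phi)=(A_0+a,\Phi_0+\varphi)$ and impose the Coulomb gauge $d_{A_0}^*a+[\Phi_0,\varphi]=0$. The equation becomes $D_0(a,\varphi)+Q(a,\varphi)=-e_0$. Here $D_0$ is the Dirac-type deformation operator, so that $D_0D_0^*$ is a Laplacian $\nabla^*\nabla-\mathrm{ad}(\Phi_0)^2+$(curvature and $F_{A_0}$ terms), and $Q$ is quadratic.

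Split into the off-diagonal and diagonal parts relative to $\Phi_0$.
\begin{itemize}
\item \emph{Off-diagonal part.} The term $-\mathrm{ad}(\Phi_0)^2\ge c\,m^2$ away from the cores gives a massive Laplacian. On the cores, use the invertibility of the BPS deformation operator after fixing translations. This yields bounds with explicit $m$-dependence.
\item \emph{Diagonal (abelian) part.} The operator reduces to $d+d^*$ acting on $\Omega^0\oplus\Omega^1$ on the ACyl manifold, in weighted spaces $W^{1,2}_\beta$. The range $\beta\in(\beta^*,0)$ avoids the indicial roots coming from $\lambda_1(\Sigma)$ and the metric decay rate $\delta$. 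By Lockhart--McOwen theory this operator is Fredholm with no wall-crossing. Its cokernel is $L^2$-harmonic forms, which are identified with the images of compactly supported cohomology. These vanish because $b^1=b^2=0$, giving surjectivity of $D_0D_0^*$ on the weighted spaces.
\end{itemize}

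\textbf{Main obstacle.} The key step is a uniform right-inverse estimate
\[
\|D_0^*u\|_{W^{1,2}_\beta}\le C\,m^{\gamma}\,\|u\|_{L^2_{\beta-1}}
\]
with $\gamma$ small enough that the contraction argument closes against $\|e_0\|\sim m^{-7/4}$. I would prove it by contradiction and blow-up: take $m\to\infty$ and a sequence of unit-norm $u$ with $D_0D_0^*u\to0$. Rescaling at the cores produces a nonzero kernel element of the BPS operator orthogonal to translations, which is impossible. Away from the cores the limit is a decaying abelian harmonic element in the weighted space, which is excluded by $b^1=b^2=0$ and the choice of $\beta$.

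\textbf{Conclusion.} Combine the linear estimate with the bound $\|Q(x)-Q(y)\|\le C\|x-y\|(\|x\|+\|y\|)$. This quadratic bound follows from weighted Sobolev multiplication in dimension $3$; the rescaled BPS cores contribute bounded constants after scaling. The contraction mapping principle then gives a solution with $\|(a,\varphi)\|_{W^{1,2}_\beta}\le Cm^{-7/4}$. Smoothness follows from elliptic regularity in Coulomb gauge. Irreducibility holds because $d_A\Phi\neq0$ near the cores, where the BPS profile dominates the small perturbation. The charge is $n$ and the mass is $m$ because the perturbation decays at the ends.
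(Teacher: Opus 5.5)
Your architecture is the paper's: a harmonic Dirac monopole, BPS cores on balls of radius $m^{-1/2}$, the $\underline{\mathbb{R}}\oplus L$ splitting with the mass term off the diagonal, Lockhart--McOwen theory with $b^1=b^2=0$ on the diagonal, and a contraction mapping. Your linear step differs: you use a gauge-fixed $D_0$ and a blow-up argument, whereas the paper inverts $d_2$ alone through a direct coercive estimate for $d_2^*$, with Weitzenb\"ock away from the cores, a cut-off argument near them, and no BPS linear analysis. That is a legitimate alternative, but the quantitative conclusion $\Vert(a,\varphi)\Vert_{W^{1,2}_\beta}\le Cm^{-7/4}$ is not derived, for two reasons.

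\textbf{The error term.} The error is not supported in the gluing annuli. For non-flat $g$, the pulled-back BPS monopole fails the Bogomolny equation on the whole ball. In the paper this core term, $e_0^{(1)}=(*_g-*_{g_0})F_{\eta_i^*(A^{\lambda_i}_{BPS})}=O(\overline{m})$ on $B_{2\varepsilon_i}(p_i)$, is the dominant one, and you omit it. The exponent then comes from the specific norm, not from optimising a radius. With ball weight $W_0=\overline{m}^{-2}$ and $\mathrm{vol}(B_{2\varepsilon_i})\sim\overline{m}^{-3/2}$, one gets $\Vert e_0\Vert^2_{W^{0,2}_\beta}=\overline{m}^{-4}\cdot O(\overline{m}^2)\cdot O(\overline{m}^{-3/2})$, so $-\tfrac74=-2+1-\tfrac34$. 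This is paired with an $\overline{m}$-\emph{independent} bound $\Vert d_2^*f\Vert_{W^{0,2}_{-\beta}}\ge C\Vert f\Vert_{W^{1,2}_{-\beta}}$. You never fix how your norms scale on the cores. You only say the error ``should'' be $m^{-7/4}$ ``possibly times a power of $m$'', and you allow a loss $Cm^{\gamma}$ in the inverse. The linear estimate cannot absorb such powers: the solution is bounded by the right-inverse norm times $\Vert e_0\Vert$. Any $\gamma>0$, or any extra power in the error, gives a worse exponent than the one stated.

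\textbf{The abelian cokernel.} You need decay at the ends anyway, to keep mass $m$ and charge $n$, and in these spaces $\beta<0$ forces exponential decay. So the cokernel at $\beta$ is the kernel of the adjoint at the positive weight $-\beta$. These are bounded (extended $L^2$) harmonic objects, not $L^2$ harmonic forms. For your gauge-fixed operator this kernel is nonzero. Already in the abelian model, the constants in the $\Omega^0$-slot pair to zero with $-d^*a$ for every decaying $a$. For a monopole, $(0,\Phi)$ is annihilated by $D_0D_0^*$ and lies in the dual space, since $\Phi$ grows only linearly. So the claimed surjectivity of $D_0D_0^*$ fails. You must either invert $d_2$ alone, as the paper does with $d_2^*(d_2d_2^*)^{-1}$ (its adjoint acts only on $\Omega^1$), or show that your right-hand side $(-e_0-Q,0)$ is annihilated by this cokernel. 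For $1$-forms the bounded harmonic forms do vanish, but this needs two inputs. First, $b^1(M)=0$ kills the tangential limit. Second, the single end forced by $b^2(M)=0$ excludes an asymptotic $c\,d\rho$ term by Stokes. This is the content of the paper's lemma on $d+d^*$ at positive weight.

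Finally, for a right inverse, what must vanish on the cores is $\ker D^*_{BPS}$, which follows from the Weitzenb\"ock formula. Translations lie in $\ker D_{BPS}$ and play no role here.
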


\begin{remark}
    Since $b^2(M)=0$, there is only one end of $M$, denoted by $\Sigma$. The mass $m=m^O_1$ coincides with the average mass $\overline{m}$, so our mass is not finite (~\cite{Oliveira_2016}).
\end{remark}

Our strategy is a gluing method, which can be used to construct solutions to various non-linear partial differential equations. The organization of this paper coincides with this gluing approach.
\begin{itemize}
    \item In section~\ref{section: Dirac Monopoles}, we construct a scaled $\SU(2)$-Dirac monopole $(A_D^{\overline{m}},\Phi_D^{\overline{m}})$ with singularities $\{p_1,\cdots,p_{n}\}$ on the asymptotically cylindrical $3$-manifold by lifting a $\U(1)$-Dirac monopole, and show that its Higgs field can be made sufficiently large away from the singularities by increasing the average mass $\overline{m}$.   

    \item In section~\ref{section: Approximate Solutions}, we construct an approximate solution $(A_0,\Phi_0)$ by gluing pull-back scaled BPS monopoles $(\eta^*_i(A_{BPS}^{\lambda_i}),\eta^*_i(\Phi_{BPS}^{\lambda_i}))$ onto these singularities, and bound the error term $e_0$ by the average mass.

    \item In section~\ref{section: Genuine Solutions}, we add a small perturbation $(a,\varphi)$ to the approximate solution to deform it into a genuine solution. In suitable weighted Sobolev spaces, we prove the surjectivity of the linearized operator $d_2$, and then apply the Banach space implicit function theorem to prove the existence of the solution to the Bogomolny equation.
\end{itemize}

The main difficulties are threefold. First, in Section~\ref{section: Dirac Monopoles}, we must deal with the linear growth of the Higgs field along the cylindrical ends. Second, in Section~\ref{section: Approximate Solutions}, we must choose the appropriate scaling factors for BPS monopoles. Third, in Section~\ref{section: Genuine Solutions}, we must choose suitable weighted Sobolev spaces and cut-off functions.

In fact, the gluing method developed in this paper can be extended to some other noncompact manifolds. We expect that this approach can be generalized in future work to the general constructions of Calabi-Yau and $G_2$ monopoles.

\begin{acknowledgements}
    The authors thank Masashi Hamanaka for his interest in our work and Weifeng Sun for helpful discussions. The second author is most grateful to his father, Junhua Wang, and his mother, Huizhen Xu, for their constant selfless love. 
\end{acknowledgements}

\section{Preliminaries}\label{section: Preliminaries}

\subsection{Deformation Complexes}\label{subsection: Deformation Complexes}

In this subsection, we introduce the deformation complex associated with the Bogomolny equation.

For our later purpose, to find a genuine solution to the Bogomolny equation near the approximate solution $(A_0,\Phi_0)$, we look for a ``small'' (in the sense of a suitable norm) perturbation $(a,\varphi)$, such that $(A_0+a,\Phi_0+\varphi)=(A_0,\Phi_0)+(a,\varphi)$ is a genuine solution.

The condition that $(A_0+a,\Phi_0+\varphi)$ is a genuine solution can be transformed into an equation for $(a,\varphi)$:
\begin{align}
        & & *F_{A_0+a}-d_{A_0+a}(\Phi_0+\varphi) &=0 \notag \\
        &\implies &\underbrace{(*d_{A_0}a-d_{A_0}\varphi-[a,\Phi_0])}_{\text{linear term}}+ \underbrace{(*\frac{1}{2}[a\wedge a]-[a,\varphi])}_{\text{quadratic term}}+\underbrace{(*F_{A_0}-d_{A_0}\Phi_0)}_{\text{error term}} &=0.
        \label{equation: three terms}
\end{align}

This equation~\eqref{equation: three terms} can be divided into three parts--- the linear term, the quadratic term, and the error term. We discuss these three parts in turn.
\begin{itemize}
    \item The first part is the linear term. Let
    \begin{align*}
        d_2 := d_2^{(A_0,\Phi_0)}:\Omega^1(M;\mathfrak{g}_P) \oplus \Omega^0(M;\mathfrak{g}_P)  &\to \Omega^1(M;\mathfrak{g}_P) \\
        (a,\varphi) &\mapsto *d_{A_0}a-d_{A_0}\varphi-[a,\Phi_0]
    \end{align*}
    be the \emph{linearized operator} of the Bogomolny equation at $(A_0,\Phi_0)$. For simplicity, we always omit its superscript $(A_0,\Phi_0)$ and use the notation $d_2$. The formal adjoint of $d_2$ with respect to the $L^2$-inner product is
    \begin{align}
        d_2^*: \Omega^1(M;\mathfrak{g}_P) &\to \Omega^1(M;\mathfrak{g}_P) \oplus \Omega^0(M;\mathfrak{g}_P) \notag \\
        u &\mapsto (*d_{A_0}u+[u,\Phi_0] , -d_{A_0}^*u) \label{equation:d2*u=(*dA0u+[u,Phi0],-dA0*u)}
    \end{align}

    \item The second part is the quadratic term. Let
    \begin{align*}
        Q := Q^{(A_0,\Phi_0)}:\Omega^1(M;\mathfrak{g}_P) \oplus \Omega^0(M;\mathfrak{g}_P) &\to \Omega^1(M;\mathfrak{g}_P) \\
        (a,\varphi) &\mapsto *\frac{1}{2}[a\wedge a]-[a,\varphi]
    \end{align*}
    be the \emph{quadratic operator} of the Bogomolny equation at $(A_0,\Phi_0)$. For simplicity, we always omit its superscript $(A_0,\Phi_0)$ and use the notation $Q(a,\varphi)=Q((a,\varphi),(a,\varphi))$. 

    \item The third part is the error term. Let
    \begin{align}
        e_0:\Omega^1(M;\mathfrak{g}_P) \oplus \Omega^0(M;\mathfrak{g}_P)  &\to \Omega^1(M;\mathfrak{g}_P) \notag \\
        (A_0,\Phi_0) &\mapsto *F_{A_0}-d_{A_0}\Phi_0 
        \label{equation: error terms}
    \end{align}
    be the \emph{error operator}, measuring the difference between the approximate solution $(A_0,\Phi_0)$ and the genuine solution.
\end{itemize}

Using the notation above, the Bogomolny equation~\eqref{equation: three terms} can be written as
\begin{equation}
    d_2(a,\varphi)+Q((a,\varphi),(a,\varphi))+e_0 = 0.
    \label{equation: not elliptic}
\end{equation}
However, this equation is not elliptic since it is gauge invariant under the gauge group $\SU(2)$.

The \emph{infinitesimal gauge action} at $(A_0,\Phi_0)$ is
\begin{align*}
    d_1 := d_1^{(A_0,\Phi_0)}:\Omega^0(M;\mathfrak{g}_P) &\to \Omega^1(M;\mathfrak{g}_P) \oplus \Omega^0(M;\mathfrak{g}_P) \\
    \xi &\mapsto (-d_{A_0}\xi,-[\Phi_0,\xi]). 
\end{align*} 
For simplicity, we always omit its superscript $(A_0,\Phi_0)$ and use the notation $d_1$. The formal adjoint of $d_1$ with respect to the $L^2$-inner product is
\begin{align*}
    d_1^*: \Omega^1(M;\mathfrak{g}_P) \oplus \Omega^0(M;\mathfrak{g}_P) &\to \Omega^0(M;\mathfrak{g}_P) \\
    (a,\varphi) &\mapsto -d_{A_0}^*a+[\Phi_0,\varphi]. 
\end{align*}

In the remainder of this subsection, we introduce the \emph{deformation complex}
\begin{equation}
    \Omega^0(M;\mathfrak{g}_P) \xrightarrow{d_1} \Omega^1(M;\mathfrak{g}_P) \oplus \Omega^0(M;\mathfrak{g}_P) \xrightarrow{d_2} \Omega^1(M;\mathfrak{g}_P)
    \label{equation: deformation complex}
\end{equation}
associated to the Bogomolny equation. If $(A_0,\Phi_0)$ is a monopole, then the sequence~\eqref{equation: deformation complex} is an elliptic complex since $d_2 \circ d_1=0$. The elliptic operator
\begin{equation*}
    D:= D^{(A_0,\Phi_0)} = d_2 \oplus d_1^* :  \Omega^1(M;\mathfrak{g}_P) \oplus \Omega^0(M;\mathfrak{g}_P) \to \Omega^1(M;\mathfrak{g}_P) \oplus \Omega^0(M;\mathfrak{g}_P)
\end{equation*}
defines the gauge-fixed elliptic Bogomolny equation
\begin{equation*}
    D(a,\varphi) = (f,0),
\end{equation*}
where $d_1^*(a,\varphi)=0$ describes the local slice for the gauge action at $(A_0,\Phi_0)$.

As a special version of the Weitzenb\"ock formula for connections on a vector bundle, we have the following monopole Weitzenb\"ock formula in the context of the Bogomolny equation.

\begin{lemma}[Monopole Weitzenb\"ock Formulas]
    Let $(M,g)$ be an oriented Riemannian $3$-manifold, and let $(A,\Phi)$ be a configuration on $M$. The \emph{monopole Weitzenb\"ock formula} is
    \begin{equation}
        d_2d_2^* w =\nabla^*_{A}\nabla_{A}w -[\Phi,[\Phi,w]] + \Ric(w) +*[e_0\wedge w], \label{equation:d2d2*=Weitzenbock}
    \end{equation}
    where $w \in \Omega^1(M;\mathfrak{g}_P)$.
\end{lemma}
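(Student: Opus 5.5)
Starting from the formula \eqref{equation:d2*u=(*dA0u+[u,Phi0],-dA0*u)} for $d_2^*$ (with $(A_0,\Phi_0)$ replaced by a general configuration $(A,\Phi)$), we compute $d_2d_2^*w$ directly by substituting into the definition of $d_2$. Writing $d_2^*w=(a,\varphi)$ with $a=*d_Aw+[w,\Phi]$ and $\varphi=-d_A^*w$, we get
\begin{equation*}
    d_2d_2^*w = *d_A(*d_Aw+[w,\Phi]) + d_Ad_A^*w - [*d_Aw+[w,\Phi],\Phi].
\end{equation*}
We then sort the terms by their order in $\Phi$.

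\emph{Second-order terms.} The terms with no $\Phi$ are $*d_A*d_Aw+d_Ad_A^*w$. On $1$-forms in dimension $3$, $d_A^*=(-1)^{?}*d_A*$ with the sign fixed so that $*d_A*d_A=d_A^*d_A$. Hence these terms combine to the covariant Hodge Laplacian $\Delta_A w=(d_Ad_A^*+d_A^*d_A)w$. We then invoke the standard Weitzenb\"ock formula for bundle-valued $1$-forms,
\begin{equation*}
    \Delta_A w=\nabla_A^*\nabla_Aw+\Ric(w)+*[*F_A\wedge w],
\end{equation*}
where the curvature term is written with the paper's convention for $[\cdot\wedge\cdot]$. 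Concretely, it arises from $\sum e^i\wedge\iota_{e_j}[F_{ij},w]$, which in dimension $3$ can be rewritten using $*$. This identification of the curvature contribution in $3$-dimensional form, with the right sign, is the step I expect to be the main obstacle. I would verify it in a local orthonormal frame, writing $F_A=\tfrac12F_{ij}e^i\wedge e^j$ and $*F_A=\sum_k B_k e^k$.

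\emph{First-order terms.} These are $*d_A[w,\Phi]-[*d_Aw,\Phi]$. Expanding gives
\begin{equation*}
    *d_A[w,\Phi]=*[d_Aw,\Phi]-*[w\wedge d_A\Phi],
\end{equation*}
with the sign coming from the graded Leibniz rule. The $[d_Aw,\Phi]$ pieces cancel, since $*$ commutes with bracketing by the $0$-form $\Phi$. What remains is $-*[w\wedge d_A\Phi]=*[d_A\Phi\wedge w]$, using the symmetry of the bracket on odd forms $[\alpha\wedge\beta]=[\beta\wedge\alpha]$ for $\mathfrak g$-valued $1$-forms, up to the appropriate sign.

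\emph{Zeroth-order term.} This is $-[[w,\Phi],\Phi]=-[\Phi,[\Phi,w]]$.

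Finally we combine the curvature term from the second-order part with the first-order remainder. We should get
\begin{equation*}
    *[*F_A\wedge w]-*[d_A\Phi\wedge w]=*[e_0\wedge w],
\end{equation*}
where $e_0=*F_A-d_A\Phi$ as in \eqref{equation: error terms}, and the two sign conventions must be checked to match. This yields \eqref{equation:d2d2*=Weitzenbock}. The key consistency check is that when $(A,\Phi)$ is a monopole the cross terms cancel exactly, as they must by the standard monopole Weitzenb\"ock identity on $\R^3$. I would use that flat case, with $\Ric=0$, to pin down all signs.
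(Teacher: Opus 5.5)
Your proposal is correct and follows the route the paper only gestures at; the paper gives no proof beyond invoking the vector-bundle Weitzenb\"ock formula. In an orthonormal frame, $(\Delta_A w)_k=(\nabla_A^*\nabla_A w+\Ric(w))_k+\sum_i[F_{ik},w_i]$ with $\sum_i[F_{ik},w_i]=(*[*F_A\wedge w])_k$, and the cross terms give $-*[d_A\Phi\wedge w]$, so everything combines into $*[e_0\wedge w]$ exactly as you predict. There is one slip in your first-order step: since $[\alpha\wedge\beta]=[\beta\wedge\alpha]$ for $\mathfrak{g}_P$-valued $1$-forms, $-*[w\wedge d_A\Phi]$ equals $-*[d_A\Phi\wedge w]$, not $+*[d_A\Phi\wedge w]$, and this corrected sign is the one your final combination already uses.
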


The Bogomolny equation inherits a scaling property from the conformal invariance of the ASD equation in dimension $4$. This yields a one-to-one correspondence between monopoles on $(M,g)$ and those on $(M,\delta^2 g)$.
\begin{proposition}[Scaling Invariance]
    Let $(M,g)$ be an oriented Riemannian $3$-manifold, and let $(A,\Phi)$ be a monopole on $M$. Then $(A,\delta^{-1}\Phi)$ is a monopole on $(M,\tilde{g} = \delta^2 g)$.
    \label{proposition: Scaling Invariance}
\end{proposition}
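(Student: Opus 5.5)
The plan is a direct computation: track how each ingredient of the Bogomolny equation~\eqref{equation: Bogomolny equation} transforms under the constant rescaling $\tilde g=\delta^2 g$, with $\delta>0$ a constant. The connection $A$, its curvature $F_A$ and the covariant derivative $d_A$ do not involve the metric, so $F_A$ and $d_A(\delta^{-1}\Phi)=\delta^{-1}d_A\Phi$ are unchanged apart from the explicit factor $\delta^{-1}$. The only metric-dependent object is the Hodge star acting on $\mathfrak{g}_P$-valued $2$-forms, and it acts only on the form part. So it suffices to compute $\tilde *$ on $2$-forms on an oriented $3$-manifold.

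For this computation, I fix a point and an oriented $g$-orthonormal coframe $e^1,e^2,e^3$. Then $\tilde e^i=\delta e^i$ is an oriented $\tilde g$-orthonormal coframe. Since $*(e^1\wedge e^2)=e^3$, we get
\begin{equation*}
    \tilde *(e^1\wedge e^2)=\tilde *\bigl(\delta^{-2}\tilde e^1\wedge\tilde e^2\bigr)=\delta^{-2}\tilde e^3=\delta^{-1}e^3,
\end{equation*}
and similarly for the cyclic permutations. Hence $\tilde *=\delta^{3-2\cdot 2}*=\delta^{-1}*$ on $\Omega^2(M;\mathfrak{g}_P)$. Equivalently, one can use the general formula $\tilde *=\delta^{n-2k}*$ on $k$-forms in dimension $n$.

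Putting these together, we obtain
\begin{equation*}
    \tilde * F_A=\delta^{-1}*F_A=\delta^{-1}d_A\Phi=d_A(\delta^{-1}\Phi),
\end{equation*}
where the middle equality is the hypothesis that $(A,\Phi)$ is a monopole on $(M,g)$. This is exactly the Bogomolny equation for $(A,\delta^{-1}\Phi)$ on $(M,\tilde g)$.

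The map $(A,\Phi)\mapsto(A,\delta^{-1}\Phi)$ has the inverse $(A,\Psi)\mapsto(A,\delta\Psi)$, and the same computation with $\delta$ replaced by $\delta^{-1}$ applies to it. This gives the claimed one-to-one correspondence. There is no real obstacle; the only step that needs care is getting the exponent of $\delta$ in the Hodge star on $2$-forms right, since the sign of that exponent decides whether $\Phi$ is scaled by $\delta^{-1}$ or by $\delta$.
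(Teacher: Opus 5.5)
Your proposal is correct and follows the paper's proof: both use $\tilde{*} = \delta^{3-2\cdot 2}* = \delta^{-1}*$ on $2$-forms, then $\tilde{*}F_A = \delta^{-1}d_A\Phi = d_A(\delta^{-1}\Phi)$. You also check the Hodge star exponent with an orthonormal coframe and state explicitly that $\delta$ is a positive constant, which is needed so that $d_A(\delta^{-1}\Phi) = \delta^{-1}d_A\Phi$; the paper leaves both points implicit.
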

\begin{proof}
    The transformation $*_{\tilde{g}} = \delta^{n-2k}*_g$ for $n$-dimensional Hodge star operators acting on $k$-forms induces
    \begin{equation*}
        *_{\tilde{g}}F_A = \delta^{3-2\times 2}*_g F_A 
        = \delta^{-1} (*_g F_A) 
        = \delta^{-1} (d_A \Phi) 
        =d_A (\delta^{-1}\Phi).
    \end{equation*}
\end{proof}

\subsection{Analysis on Asymptotically Cylindrical $3$-Manifolds}\label{subsection: Analysis on Asymptotically Cylindrical $3$-Manifolds}

In this subsection, we introduce the Lockhart-McOwen theory for the elliptic operators $d^*d$ and $d+d^*$ on the asymptotically cylindrical $3$-manifold $(M,g)$.

\begin{definition}[Asymptotically Cylindrical Bundles]\label{definition:cylindrical vector bundle, asymptotically cylindrical vector bundle}
    Let $E_0 \to \R \times \Sigma$ be a vector bundle equipped with a fiber metric $h_{E_0}$ and a connection $\nabla_{E_0}$ compatible with $h_{E_0}$. The vector bundle $E_0$ is called \emph{cylindrical} if it is invariant under translations in the $\R$-direction, i.e. for all $t \in \R$, $\tau_t^* (E_0 ) \cong E_0$, $\tau_t^* (h_{E_0} ) = h_{E_0}$, and $\tau_t^* (\nabla_{E_0} ) = \nabla_{E_0}$, where
    \begin{align*}
        \tau_t : \R \times \Sigma &\to \R \times \Sigma \\
        (r, x ) &\mapsto (r+t, x ).
    \end{align*} 
    Let $E \to M$ be a vector bundle equipped with a fiber metric $h_E$ and a connection $\nabla_E$ compatible with $h_E$. The vector bundle $E$ is called \emph{asymptotically cylindrical (ACyl)} to $E_0$ if $\varphi^* (E ) \cong E_0$,
    \begin{align*}
        \lvert \varphi^* (h_E ) - h_{E_0} \vert_{h_{E_0}}=O (e^{- \delta r} ),\; \; \lvert \varphi^* (\nabla_E ) - \nabla_{E_0} \rvert_{h_{E_0}} = O (e^{- \delta r} ), \qquad \text{as }r \to \infty
    \end{align*}
    on $(1,\infty)_r \times \Sigma$.
\end{definition}

\begin{definition}[Lockhart-McOwen Sobolev Spaces]
    Let $(M,g)$ be an asymptotically cylindrical $3$-manifold, and let $\rho$ be a distance function. For $p \geq 1$, $k \geq 0$, and $\beta \in \R$, we define the norm
    \begin{align*}
        \Vert u \Vert_{L^p_{k, \beta}}^p = {\sum_{i=0}^k \int_M e^{- \beta \rho} \lvert \nabla_E^i u \rvert_{h_E}^p \vol_g}
    \end{align*}
    for all sections $u \in \operatorname{Sec} (E )$. Similarly, $L^p_{k, \beta} (E )$ is the set of all sections $u \in \operatorname{Sec} (E )$ such that $u$ is $k$-times weakly differentiable and $\Vert u \Vert_{L^p_{k, \beta}} < \infty$ holds for $p \geq 1$, $k \geq 0$, and $\beta \in \R$. The Banach space $(L^p_{k, \beta} (E ), \Vert - \Vert_{L^p_{k, \beta}} )$ is called the \emph{Lockhart-McOwen Sobolev Space}. 
\end{definition}

\begin{theorem}[Weighted Sobolev Embedding Theorem]\label{Theorem:weighted sobolev embedding theorem}
    For all $k \geq l \geq 0$, $p,q>1$, and $\beta \leq \gamma$, if $\frac{1}{p} - \frac{1}{q} \leq \frac{k-l}{3}$, then there exists a continuous embedding
    \begin{equation*}
        L^p_{k, \beta} (E ) \hookrightarrow L^q_{l, \gamma} (E ),
    \end{equation*}
    that is, for all $u \in L^p_{k, \beta} (E ) \subset L^q_{l,\gamma}(E)$,
    \begin{equation*}
        \Vert u \Vert_{L^q_{l, \gamma}} \leq C \Vert u \Vert_{L^p_{k, \beta}}.
    \end{equation*}
\end{theorem}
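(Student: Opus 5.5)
We would prove Theorem~\ref{Theorem:weighted sobolev embedding theorem} by reducing it to the unweighted Sobolev embedding on uniformly controlled pieces of $M$, and then summing the local estimates against the weight. Using $\varphi$ from Definition~\ref{definition: Asymptotically Cylindrical 3-Manifolds}, we cover $M$ by the compact set $K' = K \cup \varphi((1,3]\times\Sigma)$ together with the slabs $U_j = \varphi((j-1,j+2)\times\Sigma)$ for $j\ge 2$. Each slab is the translate of the fixed model $(-1,2)\times\Sigma$. By Definition~\ref{definition: Asymptotically Cylindrical 3-Manifolds} and Definition~\ref{definition:cylindrical vector bundle, asymptotically cylindrical vector bundle}, the pulled-back metric $\varphi^*g$, the fiber metric $h_E$ and the connection $\nabla_E$ on $U_j$ converge to the translation-invariant model with all derivatives at rate $O(e^{-\delta j})$. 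So the data on $U_j$ lie in a compact family of smooth structures on a fixed manifold with boundary.

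\textbf{Step 1 (uniform local embedding).} On the model slab the classical Sobolev embedding $L^p_k \hookrightarrow L^q_l$ holds when $\frac1p-\frac1q\le\frac{k-l}{3}$. Since the constant depends continuously on the $C^k$ data of the metric and connection, we get one constant $C_0$, independent of $j$, such that
\begin{equation*}
\|u\|_{L^q_l(V_j)} \le C_0 \|u\|_{L^p_k(U_j)},
\end{equation*}
where $V_j=\varphi([j,j+1]\times\Sigma)\subset U_j$. The same estimate holds on $K'$ with its own constant. This step replaces a global argument by a bounded-geometry argument, and it is where the ACyl hypothesis is used.

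\textbf{Step 2 (weights).} On $U_j$ we have $|\rho-j|\le 2$, so $e^{-\beta\rho}$ is comparable to $e^{-\beta j}$ with constants depending only on $\beta$. Raising the local estimate to the $q$-th power and inserting weights gives
\begin{equation*}
\int_{V_j} e^{-\gamma\rho}|\nabla^i u|^q \le C\, e^{-\gamma j} e^{\beta j q/p}\, \Bigl(\int_{U_j} e^{-\beta\rho}\textstyle\sum_{i\le k}|\nabla^i u|^p\Bigr)^{q/p}.
\end{equation*}
We then sum over $j$. The overlaps of the $U_j$ have bounded multiplicity, namely three. If $q\ge p$, we use $\ell^p\subset\ell^q$, i.e.\ $\sum a_j^{q/p}\le(\sum a_j)^{q/p}$. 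This requires $e^{-\gamma j+\beta jq/p}$ to be bounded, that is $\gamma/q\ge\beta/p$. If $q<p$, we instead apply H\"older's inequality in $j$, which requires the factor $e^{(-\gamma/q+\beta/p)j}$ to be $\ell^{s}$-summable, with $\frac1s=\frac1q-\frac1p$. That holds when $\gamma/q>\beta/p$ strictly.

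\textbf{Main obstacle.} We expect the hardest part to be this weight bookkeeping, not the analysis. With the weight placed inside the $p$-th power as in the definition of $\|\cdot\|_{L^p_{k,\beta}}$, the condition $\beta\le\gamma$ alone gives the needed inequality $\gamma/q\ge\beta/p$ when $p=q$, and more generally when the signs are favourable (e.g.\ $\beta\le 0$ and $q\ge p$). In the remaining cases we would first check whether the paper intends the normalized weight $e^{-\beta\rho/p}$, under which the condition becomes exactly $\beta\le\gamma$, with strict inequality for $q<p$. We would then state the proof under that convention. In the paper the theorem is applied with $p=q=2$, for which every convention agrees and the argument above is complete.
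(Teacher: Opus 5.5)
The paper gives no proof of this theorem; it is quoted as standard Lockhart--McOwen background, so there is no argument of the authors' to compare yours with. Your slab decomposition is the standard proof and it is sound: uniform local Sobolev constants on the translates $U_j$, overlaps of multiplicity three, and $e^{-\beta\rho}\simeq e^{-\beta j}$ on $U_j$. Your exponent bookkeeping is also right. The paper's norm is $\sum_i\int_M e^{-\beta\rho}\lvert\nabla^i u\rvert^p\vol_g=\sum_i\lVert e^{-\beta\rho/p}\nabla^i u\rVert_{L^p}^p$, so what your argument proves is the embedding under $\gamma/q\ge\beta/p$ when $q\ge p$, and under $\gamma/q>\beta/p$ when $q<p$.

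One small repair in Step 1. The paper's definition of an ACyl bundle controls $h_E$ and $\nabla_E$ only to zeroth order, so you cannot appeal to a compact family of bundle data. Instead, use Kato's inequality $\bigl\lvert d\lvert\nabla^i u\rvert\bigr\rvert\le\lvert\nabla^{i+1}u\rvert$ to reduce to scalar Sobolev inequalities on the slabs. These need only the bounded geometry of $g$.

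What you flag as the main obstacle is a genuine defect of the statement, not of your method. Under the stated hypothesis $\beta\le\gamma$ the theorem is false, so any proof must change it.

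\begin{itemize}
\item Take $E$ trivial and $u=\chi e^{\rho/3}$, where $\chi$ is a cutoff supported on the end and equal to $1$ for $\rho\ge 2$. Then $\lVert u\rVert^2_{L^2_{1,1}}\approx\int^\infty e^{-r/3}\,dr<\infty$, while $\lVert u\rVert^6_{L^6_{0,1}}\approx\int^\infty e^{r}\,dr=\infty$. Yet $k=1$, $l=0$, $p=2$, $q=6$, $\beta=\gamma=1$ satisfy every hypothesis.
\item Likewise $u=\chi\rho^{-1/2}$ lies in $L^4_{0,0}$ but not in $L^2_{0,0}$. So for $q<p$ the case $\beta=\gamma$ fails whatever the weighting convention.
\end{itemize}

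The right move is to state and prove the theorem with hypothesis $\gamma/q\ge\beta/p$ (strict if $q<p$), which is exactly what your Step 2 delivers.

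There are two slips in your hedging.
\begin{itemize}
\item $e^{-\beta\rho/p}$ is precisely the paper's own convention. The convention under which $\beta\le\gamma$ is the correct condition for $q\ge p$ is a $p$-independent multiplier such as $\lVert e^{-\beta\rho/2}u\rVert_{L^p_k}$, which agrees with the paper's only when $p=2$.
\item The paper does not use only $p=q=2$. The proof of Lemma~\ref{lemma: quadratic term estimation} uses $L^2_{1,\beta}\hookrightarrow L^6_{0,\beta}$ with $\beta\in(\beta^*,0)$. That case lies in your favourable regime $\beta\le 0$, $q\ge p$, so the corrected statement still covers every application in the paper.
\end{itemize}
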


Let $E, E' \to M$ be asymptotically cylindrical vector bundles to cylindrical vector bundles $E_0, E'_0 \to \R \times \Sigma$ respectively. 

\begin{definition}\label{definition:cylindrical LPDO, asymptotically cylindrical LPDO}
    A linear partial differential operator $F_0 : \Gamma (E_0 ) \to \Gamma (E'_0 )$ is called \emph{cylindrical} if it is invariant under translations in the $\R$-direction. A linear partial differential operator $F : \Gamma (E ) \to \Gamma (E' )$ of order $k$ is called \emph{asymptotically cylindrical (ACyl)} to $F_0$ if
    \begin{align*}
        \lvert \varphi^* (F ) - F_0 \rvert = O (e^{- \delta r} ), \qquad \text{as }r \to \infty 
    \end{align*}
    on $(1, \infty )_r \times \Sigma$. 
\end{definition}

For $p\geq1$, $l \geq 0$, and $\beta \in \R$, it is standard that $F$ can be extended to a bounded linear operator
\begin{align*}
    F^p_{l+k, \beta} : L^p_{l+k, \beta} (E ) \to L^p_{l, \beta} (E' ). 
\end{align*}

\begin{theorem}[{\cite[(2.4)]{MR837256}}]\label{Theorem:elliptic estimate}
    If the operator $F$ is elliptic, then there is a constant $C>0$ such that
    \begin{align*}
        \Vert u \Vert_{L^p_{l+k, \beta}} \leq C (\Vert Fu \Vert_{L^p_{l, \beta}} + \Vert u \Vert_{L^p_{0,\beta}} ), \qquad \forall u \in L^p_{l+k, \text{loc}} (E ). 
    \end{align*}
\end{theorem}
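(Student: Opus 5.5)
The plan is to reduce the weighted elliptic estimate to two standard ingredients: the interior (local) elliptic estimate on balls of uniformly bounded geometry, and a partition of unity subordinate to a uniformly locally finite cover of $M$ by such balls. The weight $e^{-\beta\rho}$ is essentially constant on each ball, so it can be moved in and out of the local estimates at the cost of a uniform constant.

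\textbf{Step 1: uniformly controlled cover.} Since $M$ is asymptotically cylindrical, cover the compact set $K$ by finitely many coordinate balls. Cover the end $(1,\infty)\times\Sigma$ by translates $\varphi(\tau_j(U_\alpha))$, $j\in\Z_{\geq 0}$, where $\{U_\alpha\}$ is a fixed finite cover of a slab $(0,3)\times\Sigma$ by coordinate charts. Write $B_{j,\alpha}$ for each such set and $\tilde B_{j,\alpha}\supset B_{j,\alpha}$ for a slightly enlarged version. The cover then has bounded multiplicity. On each translate, the pulled-back data $\varphi^*g$, $\varphi^*h_E$, $\varphi^*\nabla_E$ and the coefficients of $\varphi^*F$ converge exponentially to translation-invariant data, by Definitions~\ref{definition:cylindrical vector bundle, asymptotically cylindrical vector bundle} and~\ref{definition:cylindrical LPDO, asymptotically cylindrical LPDO}. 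Consequently these coefficients, together with the ellipticity constant of $F$ in the charts, are bounded uniformly in $j$.

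\textbf{Step 2: local estimates.} On each pair $B_{j,\alpha}\subset\tilde B_{j,\alpha}$, apply the classical interior $L^p$ estimate (Agmon--Douglis--Nirenberg / Calder\'on--Zygmund):
\begin{equation*}
\Vert u\Vert_{L^p_{l+k}(B_{j,\alpha})}\leq C\bigl(\Vert Fu\Vert_{L^p_l(\tilde B_{j,\alpha})}+\Vert u\Vert_{L^p(\tilde B_{j,\alpha})}\bigr).
\end{equation*}
The constant $C$ depends only on the coefficient bounds and the ellipticity constant, so by Step 1 it is independent of $j$ and $\alpha$. The $\Vert u\Vert_{L^p}$ term absorbs the intermediate-order norms by interpolation. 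I expect this uniformity to be the main point requiring care. The cleanest way to secure it is to note that the operators on the translated charts lie in a compact family, namely small perturbations of the single cylindrical operator $F_0$, for which one fixed constant works.

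\textbf{Step 3: insert the weight and sum.} On $\tilde B_{j,\alpha}$ the distance function satisfies $\lvert\rho-j\rvert\leq 4$. Hence $e^{-\beta\rho}$ is comparable to $e^{-\beta j}$ there, up to a factor $e^{4\lvert\beta\rvert}$. Raise the local estimate to the $p$-th power, multiply by $e^{-\beta j}$, and replace $e^{-\beta j}$ by $e^{-\beta\rho}$ inside each integral. Summing over all $(j,\alpha)$ and using the bounded multiplicity of the enlarged cover gives
\begin{equation*}
\Vert u\Vert^p_{L^p_{l+k,\beta}}\leq C\bigl(\Vert Fu\Vert^p_{L^p_{l,\beta}}+\Vert u\Vert^p_{L^p_{0,\beta}}\bigr).
\end{equation*}
Taking $p$-th roots yields the claim, with the convention that the right-hand side may be infinite. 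The compact part $K$ contributes only finitely many balls and is handled the same way.
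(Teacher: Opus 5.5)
The paper does not prove this estimate; it only cites Lockhart--McOwen. Your localization argument is the standard proof behind that citation, and its mechanics are sound. On each unit slab the weight $e^{-\beta\rho}$ is comparable to the constant $e^{-\beta j}$, so it passes through the interior estimates at the cost of a constant depending only on $\beta$. Bounded multiplicity of the enlarged cover then lets you sum. An equivalent route is to conjugate: use $\Vert u\Vert_{L^p_{k,\beta}}\simeq\Vert e^{-\beta\rho/p}u\Vert_{L^p_{k,0}}$ and the fact that $e^{-\beta\rho/p}Fe^{\beta\rho/p}$ is again an ACyl elliptic operator, which reduces everything to $\beta=0$. Your version avoids computing the conjugated operator, because $\lvert d\rho\rvert$ is bounded and so the weight is locally constant up to a uniform factor.

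The one inference that fails as written is in Step 1. Uniform ellipticity in $j$ does not follow from pointwise ellipticity of $F$ plus convergence of its coefficients, since a limit of invertible symbols can degenerate. For example, take $F=-\partial_r^2+e^{-r}\Delta_\Sigma$ on the end. It is elliptic at every point and ACyl to $F_0=-\partial_r^2$, which is not elliptic. Now take $u=\chi(r-T)\psi$ with $\chi$ a fixed bump, $\Delta_\Sigma\psi=\mu\psi$, $\lVert\psi\rVert_{L^2(\Sigma)}=1$ and $\mu\approx e^{T}$. Then $\lVert Fu\rVert_{L^2_{0,\beta}}+\lVert u\rVert_{L^2_{0,\beta}}$ and $\lVert u\rVert_{L^2_{2,\beta}}$ differ by a factor of order $\mu\to\infty$, because the weight is comparable to a constant on $\operatorname{supp}u$. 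So the statement as printed, with only ``$F$ elliptic'', is false.

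You must add that $F_0$ is elliptic, as the paper assumes in Definition~\ref{definition: Fredholm}. With that hypothesis your compactness remark closes the step: the translated operators for $j$ large are small perturbations of the elliptic $F_0$ on a fixed slab, and the finitely many remaining charts are handled by compactness.

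Two further hypotheses should also be stated explicitly. First, the Calder\'on--Zygmund estimate in Step 2 holds only for $1<p<\infty$; it fails at $p=1$ even locally. The theorem must therefore be read in that range, despite the surrounding ``$p\geq 1$''. Second, uniform interior constants for the $L^p_{l+k}$ estimate need uniform $C^{l}$ bounds on the coefficients, and for $l=0$ a uniform modulus of continuity of the leading coefficients. So the exponential convergence of $\varphi^*h_E$, $\varphi^*\nabla_E$ and $\varphi^*F$ has to include derivatives, as it does for the metric in the ACyl definition.
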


\begin{definition}\label{definition: Fredholm}
    Suppose that $F, F_0$ are elliptic. Denote by $-\otimes \mathbb C$ the complexification, and extend $F_0$ to a complex linear operator $F_0 : \Gamma(E_0 \otimes \mathbb C) \longrightarrow \Gamma(E'_0 \otimes \mathbb C)$. With this convention, the set $\mathcal D_{F_0} \subset \mathbb R$ is defined as follows:
    \begin{equation*}
        \beta \in \mathcal D_{F_0}
    \iff
    \exists\; \gamma \in \mathbb R,\;
    \exists\; 0\neq u \in \Gamma(E_0 \otimes \mathbb C),
    \end{equation*}
    such that $u$ is invariant under translations in the $\R$-direction and $F_0(e^{(\beta + i\gamma)r} u) = 0$. 
\end{definition}

\begin{theorem}[{\cite[Theorem 1.1]{MR837256}}]
    The set $\mathcal{D}_{F_0} \subset \R$ is discrete. For $p\geq 1$, $l \geq 0$, and $\beta \in \R$, the bounded linear operator $F^p_{l+k, \beta} : L^p_{l+k, \beta} (E ) \to L^p_{l, \beta} (E' )$ is Fredholm if and only if $\beta \notin \mathcal{D}_{F_0}$.
    \label{theorem: Fredholm operator}
\end{theorem}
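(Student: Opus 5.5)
This theorem is the Lockhart--McOwen Fredholm criterion, cited from \cite[Theorem 1.1]{MR837256}, so the plan is to reconstruct the standard argument. There are three steps: discreteness of $\mathcal{D}_{F_0}$ via the Fourier--Laplace transform in $r$; the Fredholm property off $\mathcal{D}_{F_0}$ via a parametrix; and failure of closed range on $\mathcal{D}_{F_0}$.

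\textbf{Discreteness.} Write the cylindrical elliptic operator as $F_0=\sum_{j\le k} B_j\,\partial_r^j$, where each $B_j$ is a differential operator on $\Sigma$. For each $\lambda=\beta+i\gamma\in\mathbb{C}$, consider the operator pencil $F_0(\lambda)=\sum_j \lambda^j B_j:\Gamma(E_0|_\Sigma\otimes\mathbb{C})\to\Gamma(E_0'|_\Sigma\otimes\mathbb{C})$. Translation-invariant solutions $e^{\lambda r}u$ correspond exactly to elements of $\ker F_0(\lambda)$.

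Ellipticity of $F_0$ makes $F_0(\lambda):L^p_{l+k}(\Sigma)\to L^p_l(\Sigma)$ a holomorphic family of Fredholm operators of index $0$, because $\Sigma$ is closed. Parameter-ellipticity along the imaginary direction shows that $F_0(\lambda)$ is invertible for $|\operatorname{Im}\lambda|$ large with $\operatorname{Re}\lambda$ bounded. The analytic Fredholm theorem then makes the non-invertible set discrete in $\mathbb{C}$, and finite in each vertical strip. Projecting this set to its real parts gives a discrete $\mathcal{D}_{F_0}$.

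\textbf{Fredholm for $\beta\notin\mathcal{D}_{F_0}$.} First treat the model. Conjugate by the weight, $u=e^{\beta r/p}v$, which converts $L^p_{\cdot,\beta}$ into unweighted spaces. Then take the Fourier transform in $r$. On the line $\operatorname{Re}\lambda=\beta/p$ the resolvent $F_0(\lambda)^{-1}$ exists and is uniformly bounded, by the large-$|\gamma|$ estimates. This gives an inverse for $F_0$ on the full cylinder: directly for $p=2$ by Plancherel, and for general $p$ by a Mikhlin-type multiplier theorem with operator-valued symbols.

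Next, patch. Cover $M$ by the compact piece $K$ (enlarged) and the end. Use a local interior parametrix on $K$ and the cylinder inverse on the end, joined by cutoffs. The errors consist of compactly supported smoothing terms, which are compact by Rellich, plus the terms from $\varphi^*F-F_0=O(e^{-\delta r})$. I must check that the latter are compact between weighted spaces: they are small far out and compact on truncations, so they are a norm limit of compact operators.

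This gives a two-sided parametrix, hence the Fredholm property. Alternatively, finite-dimensionality of the kernel and closed range follow from Theorem~\ref{Theorem:elliptic estimate} improved to $\|u\|_{L^p_{l+k,\beta}}\le C(\|Fu\|+\|u\|_{L^p(K')})$, which uses the cylinder invertibility.

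\textbf{Necessity, and the main obstacle.} If $\beta\in\mathcal{D}_{F_0}$, take $e^{\lambda r}u$ with $\operatorname{Re}\lambda$ equal to the critical weight. Cut it off smoothly to long intervals $[T,2T]$ placed far down the end. Normalized, these cutoffs have $\|F u_T\|\to0$ while they converge weakly to $0$, so the sequence has no convergent subsequence. This contradicts the estimate $\|u\|\le C(\|Fu\|+\|\text{compact}\|)$ that the Fredholm property forces.

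I expect the main obstacle to be the uniform resolvent bound for large $|\operatorname{Im}\lambda|$ together with the $L^p$ ($p\ne2$) multiplier step. The $L^2$ case is straightforward. For $L^p$ I would first try the Lockhart--McOwen device of working on $L^2$, then bootstrapping with the local elliptic estimates of Theorem~\ref{Theorem:elliptic estimate} on unit-length cylinder segments, which are uniform by translation invariance.
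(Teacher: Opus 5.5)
Your outline is the standard Lockhart--McOwen route: the indicial pencil plus analytic Fredholm theory for discreteness; weight conjugation, Fourier--Laplace inversion on the cylinder and parametrix patching for sufficiency; cut-off indicial solutions for necessity. The paper gives no argument of its own beyond citing that reference. But there is a genuine gap exactly where the hypothesis is used.

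With the paper's norm $\sum_i\int_M e^{-\beta\rho}\lvert\nabla^i u\rvert^p$, your own conjugation $u=e^{\beta r/p}v$ shows that the critical line is $\operatorname{Re}\lambda=\beta/p$. You then assert that $F_0(\lambda)^{-1}$ exists and is uniformly bounded on that line. The hypothesis $\beta\notin\mathcal{D}_{F_0}$ does not give this; it only excludes indicial roots with $\operatorname{Re}\lambda=\beta$. The same mismatch appears in the necessity step. There $\beta\in\mathcal{D}_{F_0}$ produces a root with real part $\beta$, not $\beta/p$. Cut-offs of $e^{\lambda r}u$ at a non-critical exponent are not a normalized approximate kernel in $L^p_{l+k,\beta}$, because their mass piles up at one end of $[T,2T]$ and the commutator term does not become small.

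Completed honestly, your argument proves ``Fredholm iff $\beta/p\notin\mathcal{D}_{F_0}$''. This is not cosmetic: with the paper's normalization, the statement as written is false for every $p>1$. For $F=d^*d$ one has $F_0=-\partial_r^2+\Delta_\Sigma$ and $\mathcal{D}_{F_0}=\{\pm\sqrt{\mu_j}\}$, where the $\mu_j$ are the eigenvalues of $\Delta_\Sigma$. Take $\beta=\sqrt{\mu_1}$, with $\mu_1$ the first positive eigenvalue. Then $\beta\in\mathcal{D}_{F_0}$ but $\beta/p\notin\mathcal{D}_{F_0}$, so the operator is Fredholm. You needed to detect this and either switch to Lockhart--McOwen's normalization $\lVert e^{-\beta\rho}u\rVert_{L^p_k}$ or replace $\beta$ by $\beta/p$ in the criterion.

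The range $p\geq 1$ is also out of reach. Local elliptic estimates of the type in Theorem~\ref{Theorem:elliptic estimate} and the multiplier theorem you invoke hold only for $1<p<\infty$. At $p=1$ there is no $L^1$ elliptic estimate, the range is not closed, and the Fredholm property genuinely fails.

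Two smaller points. First, $(\varphi^*F-F_0)Q_{\mathrm{cyl}}$ is an order-zero operator. It is not compact in general, even after truncation, so describing it as a ``norm limit of compact operators'' is wrong. The right mechanism is to start the end at large $R$, so this term has norm less than $1$, and absorb it by a Neumann series; your enlarged $K$ permits exactly this. Second, for $p\neq 2$ the operator-valued Mikhlin theorem needs $\mathcal{R}$-boundedness of the resolvent family, not just uniform bounds. So the step you flagged as the main obstacle remains unresolved in the proposal.
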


The operator
\begin{equation*}
    d^* d : C^{\infty} (M ) \to C^{\infty} (M )
\end{equation*}
is an asymptotically cylindrical elliptic operator of order $2$. Consider the extended linear operator
\begin{equation*}
    (d^* d )^p_{l+2, \beta} : L^p_{l+2, \beta} (M) \to L^p_{l, \beta} (M)
\end{equation*}
for $p\geq 1$, $l \geq 0$, and $\beta \in \R$.

\begin{lemma}\label{lemma: d*d(L(M)) = {intM u vol=0}}
    If $\beta <0$, $\beta \notin   \mathcal{D}_{(d^* d )_0}$, then the operator 
    \begin{align*}
        (d^* d )^p_{l+2, \beta} : L^p_{l+2, \beta} (M ) \to (d^* d )^p_{l+2, \beta} (L^p_{l+2, \beta} (M ) )
    \end{align*}
    has a bounded inverse map. Moreover,
    \begin{align*}
        (d^* d )^p_{l+2, \beta} (L^p_{l+2, \beta} (M ) ) = \{ u \in L^p_{l, \beta} (M ) \mid \int_M u \vol_g =0 \}. 
    \end{align*}
\end{lemma}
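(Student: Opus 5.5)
The plan is to combine three ingredients: the Fredholm property from Theorem~\ref{theorem: Fredholm operator}, an integration-by-parts argument that gives injectivity on decaying functions, and an identification of the cokernel through the $L^2$-type duality pairing.

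Note the sign convention first. The weight is $e^{-\beta\rho}$ with $\beta<0$, so every $u\in L^p_{l+2,\beta}(M)$ decays like $e^{\beta\rho/p}$ along the end, as do its derivatives. By the weighted Sobolev embedding (Theorem~\ref{Theorem:weighted sobolev embedding theorem}) together with local elliptic regularity, this decay also holds pointwise for $u$ and $du$, after possibly losing a little in the exponent.

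\textbf{Step 1 (injectivity and bounded inverse).} Suppose $d^*du=0$ with $u\in L^p_{l+2,\beta}$. Then $u$ is smooth, and on $\{\rho\le R\}$ we have
\begin{equation*}
\int_{\{\rho\le R\}}|du|^2\vol_g=\int_{\{\rho=R\}}u\,\partial_\rho u .
\end{equation*}
The right-hand side tends to $0$ as $R\to\infty$ because $u$ and $du$ decay exponentially. Hence $du=0$, so $u$ is constant, and since $u$ decays, $u=0$. Since $\beta\notin\mathcal D_{(d^*d)_0}$, Theorem~\ref{theorem: Fredholm operator} shows that the operator is Fredholm, so its range is closed. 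The operator is then a continuous bijection onto a closed subspace, and the open mapping theorem gives a bounded inverse.

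\textbf{Step 2 (range lies in the mean-zero subspace).} Every $u\in L^p_{l,\beta}$ with $\beta<0$ is integrable, since $\vol_g$ is comparable to $dr\,\vol_\Sigma$ on the end. For $f=d^*du$ we have
\begin{equation*}
\int_{\{\rho\le R\}}f=-\int_{\{\rho=R\}}\partial_\rho u\to0,
\end{equation*}
so $\int_M f\vol_g=0$.

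\textbf{Step 3 (the range is all of the mean-zero subspace).} The range is closed, so it equals the annihilator of the cokernel. The cokernel is the kernel of $d^*d$ on the dual space $L^{p'}_{-\beta p'/p}$. This dual space allows growth up to rate $|\beta|$, so it contains constants and functions of linear growth in $\rho$. Let $v$ be such a harmonic function. On the end I separate variables with respect to the eigenfunctions of $\Delta_\Sigma$, using the exponentially small perturbation of the cylindrical metric. This gives
\begin{equation*}
v=a+b\rho+(\text{modes }e^{\pm\sqrt{\lambda_j}\rho}).
\end{equation*}
If $|\beta|<\sqrt{\lambda_1(\Sigma)}$, so that the weight admits no nonzero exponential mode, then $v=a+b\rho+O(e^{-c\rho})$. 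Integrating $\Delta v=0$ then gives $b\vol(\Sigma)=0$, so $b=0$. Thus $v$ is bounded with $dv$ decaying, and the energy identity of Step 1 gives $dv=0$, that is, $v$ is constant. So the cokernel is spanned by the constants, and the range is exactly $\{\int_M u=0\}$.

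\textbf{Main obstacle.} I expect Step 3 to be the main obstacle, and it is where the hypotheses have to be read carefully. The identification of the cokernel as the constants requires two things. First, $|\beta|$ must lie below the first nonzero indicial root $\sqrt{\lambda_1(\Sigma)}$; this is consistent with the range $\beta\in(\beta^*,0)$ used in Theorem~\ref{theorem: main theorem}. For larger $|\beta|$ the exponentially growing harmonic modes enter the dual space and enlarge the cokernel. Second, $\Sigma$ must be connected. If there are several ends, harmonic functions asymptotic to $b_i\rho$ on the $i$-th end with $\sum_i b_i\vol(\Sigma_i)=0$ give additional cokernel elements. A single end is guaranteed in the setting of the main theorem by $b^2(M)=0$.

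If the direct separation-of-variables argument proves awkward, I would fall back on the Lockhart--McOwen index-jump formula. The index of $d^*d$ changes by $2$ across $\beta=0$, coming from the indicial solutions $1$ and $\rho$. Combined with the symmetry of the index under $\beta\mapsto-\beta$, which holds because $d^*d$ is formally self-adjoint, and with the injectivity from Step 1, this gives index $-1$ for $\beta<0$ near $0$. Hence the cokernel is $1$-dimensional, and Step 2 identifies it with the constants.
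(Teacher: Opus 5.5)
Your Step 1 and the bounded inverse are the paper's argument. The paper first reduces to $p=2$ using that the kernel is independent of $p,l$, and it uses cut-offs $\chi_R$ instead of your boundary integrals, but it is the same energy identity. It then excludes nonzero constants by the weight and uses closed range plus the open mapping theorem. The routes differ for the range. The paper dualizes and asserts, citing a ``weighted Hodge theorem'', that $\ker (d^*d)^q_{m+2,-\beta}\cong H^0(M)\cong\R$ for every $\beta<0$, so the range is the annihilator of $1$ and your Step 2 is implicit. You instead prove the cokernel identification directly: the expansion $a+b\rho+O(e^{-c\rho})$, the flux identity to kill $b$, and the energy identity to kill the rest. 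Your fallback is the index jump at weight $0$ combined with self-adjoint symmetry and your Step 2.

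The extra work is not wasted. The two restrictions you flag are genuine, and they are exactly where the paper's citation breaks down.

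\begin{itemize}
\item \textbf{Large weights.} Once the dual space admits growth at rate $\sqrt{\lambda_1(\Sigma)}$, the Lockhart--McOwen index on the decaying side drops by the multiplicity of $\lambda_1(\Sigma)$. The kernel stays zero by your Step 1, so the cokernel picks up exponentially growing harmonic functions. The range is then a proper subspace of the mean-zero functions.
\item \textbf{Several ends.} With $l$ ends the index jump at weight $0$ is $2l$, so the cokernel for small negative weight is $l$-dimensional. On the product cylinder $\R_t\times\Sigma$, for instance, $t$ is harmonic. Hence any compactly supported $f$ with $\int_M f\vol_g=0$ but $\int_M tf\vol_g\neq0$ is not $d^*du$ for a decaying $u$.
\end{itemize}

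So the lemma as literally stated (all $\beta<0$ with $\beta\notin\mathcal{D}_{(d^*d)_0}$, any number of ends) is false, and what you prove is the correct version. Restricting $\beta$ to lie near $0$ is harmless where the paper applies the lemma. The main theorem has a single end since $b^2(M)=0$. However, Proposition~\ref{proposition:existence of harmonic function PhiD} is stated for $l$ ends, so it needs the one-end restriction or a modified argument.

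One minor normalization point. With the weight $e^{-\beta\rho}$ inside $\int\lvert u\rvert^p$, the dual space $L^{p'}_{-\beta p'/p}$ (which you identify correctly) allows growth only up to $e^{\lvert\beta\rvert\rho/p}$, not $e^{\lvert\beta\rvert\rho}$. The sharp threshold is therefore $\lvert\beta\rvert/p<\sqrt{\lambda_1(\Sigma)}$; your condition $\lvert\beta\rvert<\sqrt{\lambda_1(\Sigma)}$ is sufficient but not sharp.
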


\begin{proof}
    We first show that $\ker (d^* d )^p_{l+2, \beta} = \{0\}$. By elliptic regularity, any $u \in \ker (d^* d )^p_{l+2, \beta}$ is smooth.
    Since $\beta < 0$ and $\beta \notin \mathcal{D}_{(d^*d)_0}$, the kernel of the Fredholm
    operator $(d^*d)^p_{l+2,\beta}$ is independent of $p \geq 1$ and $l \geq 0$
    (\cite[§1, §7]{MR837256}).
    Hence it suffices to prove the claim for $p=2$, $l$ arbitrary, and we may assume
    \begin{equation*}
        u \in L^2_{l+2, \beta} (M), \qquad d^*du = 0.
    \end{equation*}

    Since $\beta < 0$, we have $u, du \in L^2(M)$, so the following
    integration by parts is justified. Let $\chi_R$ be a smooth cutoff function,
    $\chi_R \equiv 1$ on $\rho \le R$, $\chi_R \equiv 0$ on $\rho \ge R+1$,
    with $\lvert d\chi_R \rvert \le C$ uniformly in $R$. Then
    \begin{equation*}
        0 = \int_M \chi_R u d^*du  \vol_g
        = \int_M \chi_R \lvert du \rvert^2 \vol_g
        + \int_M u  \langle d\chi_R, du \rangle \vol_g.
    \end{equation*}
    By the Cauchy-Schwarz inequality,
    \begin{equation*}
        \lvert \int_M u \langle d\chi_R, du \rangle \vol_g \rvert
        \leq C ( \int_{R \le \rho \le R+1} \lvert u \rvert^2 \vol_g )^{\frac{1}{2}}
        ( \int_M \lvert du \rvert^2 \vol_g )^{\frac{1}{2}},
    \end{equation*}
    and since $u \in L^2(M)$, the first factor tends to $0$ as $R \to \infty$.
    Letting $R \to \infty$, we obtain $0 = \int_M \lvert du \rvert^2 \vol_g$. Hence $du = 0$, so $u$ is locally constant. Since $M$ is connected,
    $u \equiv c \in \R$. If $c \neq 0$, then, as $\beta < 0$,
    \begin{equation*}
        \lVert c \rVert_{L^2_{0,\beta}}^2
        = \lvert c \rvert^2 \int_M e^{-\beta \rho} \vol_g
        \ge \lvert c \rvert^2 C \int_1^\infty e^{-\beta r}  dr = \infty,
    \end{equation*}
    contradicting $u \in L^2_{0,\beta}(M)$. Thus $c = 0$, and
    $\ker (d^*d)^2_{l+2,\beta} = \{0\}$. By the reduction above,
    $\ker (d^* d )^p_{l+2, \beta} = \{0\}$ for all $p \geq 1$.

    Hence the operator
    \begin{equation*}
        (d^* d )^p_{l+2, \beta} : L^p_{l+2, \beta} (M ) \to (d^* d )^p_{l+2, \beta} (L^p_{l+2, \beta} (M ) )
    \end{equation*}
    is injective. Since $\beta \notin \mathcal{D}_{(d^*d)_0}$,
    Theorem~\ref{theorem: Fredholm operator} implies that the image of
    $(d^*d)^p_{l+2,\beta}$ is closed in $L^p_{l,\beta}(M)$. Therefore the induced
    operator
    \begin{equation*}
        (d^* d )^p_{l+2, \beta} : L^p_{l+2, \beta} (M ) \to \operatorname{im}(d^* d )^p_{l+2, \beta}
    \end{equation*}
    is a bounded linear bijection between Banach spaces. The open mapping theorem
    then gives a bounded inverse
    \begin{equation*}
        {(d^* d )^p_{l+2, \beta}}^{-1} : (d^* d )^p_{l+2, \beta} (L^p_{l+2, \beta} (M ) ) \to L^p_{l+2, \beta} (M ).
    \end{equation*}

    To describe the image, we compute the cokernel.
    Let $q$ satisfy $\frac{1}{p} + \frac{1}{q} = 1$.
    For any $m \ge 0$, duality yields
    \begin{equation*}
        \coker (d^* d )^p_{l+2, \beta}
        \cong \ker[ (d^* d )^q_{m+2, -\beta} : L^q_{m+2, -\beta} (M ) \to L^q_{m, -\beta} (M ) ]^*.
    \end{equation*}
    Since $-\beta > 0$, the weighted Hodge theorem on asymptotically cylindrical manifolds gives an isomorphism
    \begin{equation*}
        \ker (d^* d )^q_{m+2, -\beta}
        = \ker (d^* d + d d^* )^q_{m+2, -\beta}
        \cong H^0 (M ) \cong \R,
    \end{equation*}
    where the kernel is spanned by the constant function $1$ ($1 \in L^q_{m+2, -\beta}(M)$ because $-\beta > 0$).
    Thus $\coker (d^* d )^p_{l+2,\beta}$ is $1$-dimensional, represented by the functional
    $u \mapsto \int_M u \vol_g$.

    By the Fredholm property in Theorem~\ref{theorem: Fredholm operator}, an element $u \in L^p_{l, \beta}(M)$
    lies in the image if and only if it is annihilated by every element of the cokernel, i.e.\ if and only if
    $\int_M u \vol_g = 0$. Therefore
    \begin{equation*}
        (d^* d )^p_{l+2, \beta} ( L^p_{l+2, \beta} (M ) )
        = \{ u \in L^p_{l, \beta} (M ) \mid \int_M u \vol_g = 0 \}.
    \end{equation*}
\end{proof}

The operator 
    \begin{align*}
        d + d^* : \Omega^1 (M ) \oplus \Omega^3 (M ) \to \Omega^0 (M ) \oplus \Omega^2 (M ) 
    \end{align*}
is an asymptotically cylindrical elliptic operator of order $1$. Consider the extended linear operator 
\begin{align*}
    (d + d^* )^p_{l+1, \beta} : L^p_{l+1, \beta} (\Lambda^1 T^* M \oplus \Lambda^3 T^* M ) \to L^p_{l, \beta} (\Lambda^0 T^* M \oplus \Lambda^2 T^* M )
\end{align*}
for $p\geq 1$, $l\geq 0$, and $\beta \in \R$.

\begin{lemma}\label{lemma:|u|<=C|(d+d*)u|}
    If $\gamma\in (0,-\beta^*)$, $\gamma\notin \mathcal{D}_{(d+d^*)_0}$, and $b^1(M)=0$, then there exists a constant $C>0$ such that for all $(\eta_1,\eta_3)\in L^p_{l+1,\gamma}(\Lambda^1 T^*M\oplus \Lambda^3 T^*M)$,
    \begin{equation*}
        \lVert (d+d^*)^p_{l+1,\gamma}(\eta_1,\eta_3)\rVert_{L^p_{l,\gamma}}
        \ge C\, \lVert(\eta_1,\eta_3)\rVert_{L^p_{l+1,\gamma}}.
    \end{equation*}
\end{lemma}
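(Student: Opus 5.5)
The estimate is an injectivity-plus-Fredholm argument, modelled on the proof of Lemma~\ref{lemma: d*d(L(M)) = {intM u vol=0}}.

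\textbf{Step 1: reduce to injectivity.} Since $\gamma\notin\mathcal{D}_{(d+d^*)_0}$, Theorem~\ref{theorem: Fredholm operator} shows that $(d+d^*)^p_{l+1,\gamma}$ is Fredholm. In particular its image is closed. If we can show the kernel is trivial, then the operator is a bounded bijection onto a closed subspace. The open mapping theorem then gives a bounded inverse on the image, which is exactly the claimed lower bound $\lVert(d+d^*)(\eta_1,\eta_3)\rVert_{L^p_{l,\gamma}}\ge C\lVert(\eta_1,\eta_3)\rVert_{L^p_{l+1,\gamma}}$. Alternatively, one can argue by contradiction using Theorem~\ref{Theorem:elliptic estimate} together with compactness, but the open mapping route is cleaner.

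\textbf{Step 2: identify the kernel.} By elliptic regularity, kernel elements are smooth. Because $\gamma\in(0,-\beta^*)$ lies below the first nonzero indicial root, and there are no indicial roots in $(0,\gamma]$, the kernel is independent of $p$, $l$ and of $\gamma$ in this range. Its elements are the bounded harmonic forms: asymptotically they equal a translation-invariant harmonic piece plus an exponentially decaying remainder.

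Here the interval matters. The indicial roots of $(d+d^*)_0$ on the cylinder are $0$ and $\pm\sqrt{\lambda}$ for the eigenvalues $\lambda$ of the Laplacian on $\Sigma$. The bound $\gamma<\sqrt{\lambda_1(\Sigma)}$ excludes the nonzero roots, and the bound $\gamma<\delta$ controls the perturbation terms from the metric.

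By the Atiyah--Patodi--Singer / Lockhart--McOwen Hodge theory, bounded harmonic $1$-forms that are closed and coclosed are identified with the image of $H^1_c(M)\to H^1(M)$ together with possible extended $L^2$ pieces. Altogether they lie in $H^1(M)$, which vanishes since $b^1(M)=0$.

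For $\Omega^3$: a harmonic $3$-form $\eta_3=f\vol_g$ has $d^*\eta_3=0$, so $f$ is constant. Growth up to $e^{\gamma\rho}$ does not rule out a constant, and the decoupling of degrees addresses whether constants are harmless. Since $d+d^*$ maps $\Omega^1\oplus\Omega^3$ into $\Omega^0\oplus\Omega^2$, the degrees decouple as $d^*\eta_1=0$ and $d\eta_1+d^*\eta_3=0$. Pairing with a cutoff $\chi_R$, as in Lemma~\ref{lemma: d*d(L(M)) = {intM u vol=0}}, gives $\int\chi_R\langle d\eta_1,d^*\eta_3\rangle=\int\langle d\chi_R\wedge\ldots\rangle$. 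The boundary term is controlled by the asymptotic expansion: the translation-invariant parts pair to a term computed on the cross-section $\Sigma$, and that term vanishes by Stokes' theorem on $\Sigma$. Hence $d\eta_1=0$ and $d^*\eta_3=0$, so $\eta_3=c\vol_g$. Then I would use the weight convention to exclude the constant. As I read the norm $\int e^{-\gamma\rho}|u|^p$, constants are allowed when $\gamma>0$. So to conclude that the kernel is trivial I would either interpret the estimate modulo constants on $\Omega^3$, or check that the paper's sign conventions make $L^p_{l+1,\gamma}$ a decaying space. I will follow the latter reading and mirror the $L^2$ cutoff argument from the previous lemma.

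Likewise, $\eta_1$ is then closed and coclosed, and its translation-invariant limit is a harmonic $1$-form $\alpha$ on $\Sigma$ plus a multiple of $dr$. By $b^1(M)=0$, the closed form $\eta_1$ is exact, $\eta_1=df$ with $\Delta f=0$. The function $f$ grows at most linearly, and Lemma~\ref{lemma: d*d(L(M)) = {intM u vol=0}} (the $\ker d^*d$ argument) together with the decay forces $df=0$.

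\textbf{Main obstacle.} The delicate point is Step 2's handling of the translation-invariant (weight $0$) asymptotic modes. The weight $\gamma>0$ could admit them, in particular $dr$ and constant $3$-forms. Killing them requires combining $b^1(M)=0$, which gives exactness, with the one-end structure and the boundary-term computation. I would first try the cutoff integration by parts, analysing the boundary integral over $\{\rho=R\}\cong\Sigma$ using the expansion $\eta=\eta_\infty+O(e^{-\min(\delta,\sqrt{\lambda_1})\rho})$.
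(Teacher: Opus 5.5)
Your Step 1 matches the paper's proof: Fredholmness from $\gamma\notin\mathcal{D}_{(d+d^*)_0}$, closed image, trivial kernel, then the open mapping theorem. The gap is in Step 2, at exactly the point you flagged, and it cannot be closed because the kernel is not trivial.

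The reading you chose to follow is not available. With the paper's norm $\lVert u\rVert^p_{L^p_{k,\gamma}}=\sum_i\int_M e^{-\gamma\rho}\lvert\nabla^i u\rvert^p\vol_g$, a positive weight permits growth, and the paper depends on this. The $d^*d$ lemma uses $1\in L^q_{m+2,-\beta}(M)$ for $-\beta>0$. Lemmas~\ref{lemma: duality} and~\ref{lemma: surjective1} need $\gamma=-\beta>0$ to be the growth-allowing dual of the decaying weight $\beta<0$.

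Consequently $(0,\vol_g)\in L^p_{l+1,\gamma}$, since it is parallel and $\int_M e^{-\gamma\rho}\vol_g<\infty$ on an ACyl end. On the other hand, $(d+d^*)(0,\vol_g)=(0,d^*\vol_g)=(0,\pm{*}d(1))=(0,0)$. So the stated inequality fails for this element, for every admissible $\gamma$, $p$, $l$.

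The paper's proof has the same defect. It asserts $\ker(d+d^*)^p_{l+1,\gamma}\cong H^1(M)\oplus H^3(M)$, but growth-allowed closed and coclosed forms do not inject into de Rham cohomology. In degree $3$ they are $\R\vol_g$, the Hodge duals of the constants, even though $H^3(M)=0$.

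Your $1$-form argument has a second, independent problem. The hypothesis is only $b^1(M)=0$. At positive weight you have no ``decay'', and the $d^*d$ lemma is unavailable because it needs $\beta<0$. On $\R\times S^2$ (two ends, $b^1=0$), $(dr,0)$ is a nonzero kernel element at every $\gamma>0$. Your claim that bounded closed and coclosed $1$-forms ``lie in $H^1(M)$'' fails for the same reason, since $dr$ is exact.

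What actually kills $\eta_1=dh$ is a flux identity. If $h\sim a_i\rho+b_i$ on the $i$-th end, then $\sum_i a_i\vol(\Sigma_i)=0$, which forces $a_i=0$ only when there is one end. One end follows from $b^2(M)=0$, since $b^2(M)\ge l-1$.

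What is true is the estimate for $(\eta_1,0)$ when $b^1(M)=b^2(M)=0$, and this is all Lemma~\ref{lemma: surjective1} uses, since there $\eta_3=0$. Your decoupling argument (Stokes on $\Sigma$ shows $\eta_3$ is constant and $\eta_1$ is closed and coclosed), together with $b^1=0$ and the one-end flux argument, shows the full kernel is exactly $\R\,(0,\vol_g)$. A Fredholm operator is bounded below on any closed subspace meeting its finite-dimensional kernel only in $0$. So either restrict the lemma to $\eta_3=0$ and add $b^2(M)=0$, or state the estimate modulo $\R\vol_g$.
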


\begin{proof}
    Since $\gamma\notin\mathcal{D}_{(d+d^*)_0}$, Theorem~\ref{theorem: Fredholm operator}
    implies that
    \begin{equation*}
        (d+d^*)^p_{l+1,\gamma}:
        L^p_{l+1,\gamma}(\Lambda^1 T^*M\oplus \Lambda^3 T^*M)
        \to
        L^p_{l,\gamma}(\Lambda^0 T^*M\oplus \Lambda^2 T^*M)
    \end{equation*}
    is a Fredholm operator.

    We claim that its kernel is trivial. Let $w=(\eta_1,\eta_3)\in \ker (d+d^*)^p_{l+1,\gamma}$. By elliptic regularity (Theorem~\ref{Theorem:elliptic estimate}), $w$ is smooth. Since $\gamma\in(0,-\beta^*)$,
    the extended $L^2$-Hodge theory on asymptotically cylindrical manifolds gives
    an isomorphism
    \begin{equation}\label{eq:positive weight kernel H1}
        \ker (d+d^*)^p_{l+1,\gamma}
        \cong H^1(M)\oplus H^3(M).
    \end{equation}
    Because $M$ is a non-compact $3$-manifold, $H^3(M)=0$. Moreover, by assumption
    $b^1(M)=0$, so $H^1(M)=0$. Hence the right-hand side of 
    \eqref{eq:positive weight kernel H1} vanishes, and therefore
    \begin{equation*}
        \ker (d+d^*)^p_{l+1,\gamma} = 0.
    \end{equation*}

    Thus the operator
    \begin{equation*}
        (d+d^*)^p_{l+1,\gamma}:
        L^p_{l+1,\gamma}(\Lambda^1 T^*M\oplus \Lambda^3 T^*M)
        \to
        \im(d+d^*)^p_{l+1,\gamma}
    \end{equation*}
    is a bijective bounded linear operator between Banach spaces. By the open mapping theorem,
    it admits a bounded inverse $T$. Therefore, for all
    $(\eta_1,\eta_3)\in L^p_{l+1,\gamma}(\Lambda^1 T^*M\oplus \Lambda^3 T^*M)$,
    \begin{equation*}
        \lVert(\eta_1,\eta_3)\rVert_{L^p_{l+1,\gamma}}
        \le \lVert T\rVert \lVert(d+d^*)^p_{l+1,\gamma}(\eta_1,\eta_3)\rVert_{L^p_{l,\gamma}}.
    \end{equation*}
    We complete this proof by take $C=\frac{1}{\lVert T \rVert}$.
\end{proof}

\section{Dirac Monopoles}\label{section: Dirac Monopoles}

\subsection{Local Models of $\U(1)$-Dirac Monopoles}\label{subsection: Local Models of U(1)-Dirac Monopoles}

In this subsection, we study the local behaviors of the $\U(1)$-Dirac monopole near singularities and ends.

\begin{definition}\label{definition:p1,...,pn,...,pk}
    Let $p_1, \cdots, p_{n+1} \in M$, if necessary, by taking $1 \leq {s} \leq {n+1}$ and relabeling the indices, we may assume that the points $p_1, \cdots, p_s$ are distinct from one another, while $p_{{s}+1}, \cdots, p_{n+1}$ coincide with some of $p_1, \cdots, p_{s}$. For all $\psi \in C^\infty_c(M)$, define the \emph{current} by
    \begin{align*}
        \delta_{\mathrm{current}} : C^\infty_c(M) &\to \R \\
        \psi &\mapsto -\psi (p_1 ) - \cdots - \psi (p_{n+1})  = -\sum\limits_{i=1}^{s} k_i^I \psi (p_i ), 
    \end{align*}
    where $k_i^I := \# \{ j\in \{1,\cdots,{n+1}\} \mid p_j = p_i \text{ for fixed }i=1,\cdots,s\}$.
\end{definition}

\begin{proposition}\label{proposition:existence of harmonic function PhiD}
    Let $(M,g)$ be an asymptotically cylindrical $3$-manifold. Given $\vec{m}^O= (m_1^O, \cdots, m_l^O) \in (\R_{\geq 0})^l$ and $(k_1^I, \cdots, k_s^I) \in \Z^s$ satisfying $\sum\limits_{i=1}^{s} k_i^I = \sum\limits_{i=1}^l k_i^O $, there exists a harmonic function $\phi_D : M \setminus \{p_1,\cdots,p_s\} \to \R$ such that 
    \begin{align*}
        H^D : C_c^{\infty} (M ) &\to \R \\
        f &\mapsto \int_M f \phi_D \vol_g 
    \end{align*}
    and $\Delta H^D = \delta_{\mathrm{current}}$. Moreover, $\phi_D$ has the following asymptotic expansions near the singularities and ends:
    \begin{align*}
        \left. \phi_D \right|_{B_{\varepsilon} (p_i )} &= m_i^I - \frac{k_i^I}{4 \pi r_i} + O (r_i ), \qquad \text{as }r_i \to 0, \\
        \left. \phi_D \right|_{U (\Sigma_i )} &= m_i^O + \frac{k_i^O}{\vol(\Sigma_i)} \rho + O (\rho^{-1} ), \qquad \text{as }\rho \to \infty, 
    \end{align*}
    where $r_i = \operatorname{dist} (\cdot, p_i ) : B_{\varepsilon} (p_i ) \setminus \{ p_i \} \to \R_{>0}$, and $\rho$ is a distance function in Definition~\ref{definition: Asymptotically Cylindrical 3-Manifolds}.
\end{proposition}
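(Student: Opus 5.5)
Our approach is to write $\phi_D$ as an explicit singular-plus-linear ansatz and fix the remainder with Lemma~\ref{lemma: d*d(L(M)) = {intM u vol=0}}. Sign conventions are fixed so that $\Delta = d^*d$ acting on $H^D$ distributionally gives $\delta_{\mathrm{current}}$. Near each singularity we use $-\frac{k_i^I}{4\pi r_i}$, since $d^*d\big(-\tfrac{1}{4\pi r}\big) = -\delta_0$ on $\R^3$. On each end we use $\frac{k_i^O}{\vol(\Sigma_i)}\rho + m_i^O$.

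\textbf{Step 1 (ansatz).} Choose cutoffs $\chi_i$ equal to $1$ on $B_{\varepsilon/2}(p_i)$ and supported in $B_\varepsilon(p_i)$. Choose cutoffs $\eta_i$ equal to $1$ on $\rho\geq R+1$ in the $i$-th end and vanishing on $\rho\le R$. Set
\begin{equation*}
    \phi_1 = \sum_{i=1}^s \chi_i\Big(-\frac{k_i^I}{4\pi r_i}\Big) + \sum_{i=1}^l \eta_i\Big(m_i^O + \frac{k_i^O}{\vol(\Sigma_i)}\rho\Big).
\end{equation*}
Then $d^*d\phi_1 = \delta_{\mathrm{current}} + f$ distributionally, with $f$ smooth. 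Near $p_i$, $f$ is $O(r_i^{-1})$ at worst; in geodesic normal coordinates the Laplacian of $1/r$ differs from the Euclidean one by $O(r^{-1})$, so $f$ is bounded in $L^p$ for $p<3$. On the ends, $f$ is $O(e^{-\delta\rho})$, because $d^*d\rho$ vanishes on the exact cylinder and the metric converges exponentially. Hence $f\in L^p_{0,\beta}$ for any $\beta\in(-\delta,0)$.

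\textbf{Step 2 (solvability).} Compute $\int_M f\vol_g$ by Green's formula on $M_R=\{\rho\le R\}$ minus small balls. The flux of $d\phi_1$ through $\partial B_r(p_i)$ tends to $k_i^I$ up to sign, and the flux through $\{R\}\times\Sigma_i$ tends to $\frac{k_i^O}{\vol(\Sigma_i)}\vol(\Sigma_i)=k_i^O$. Thus $\int_M f\vol_g$ is a multiple of $\sum_i k_i^I-\sum_i k_i^O$, which vanishes by hypothesis. This is exactly where the charge-balance condition is used.

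\textbf{Step 3 (correction).} Pick $\beta<0$ close to $0$, outside the discrete set $\mathcal{D}_{(d^*d)_0}$. Apply Lemma~\ref{lemma: d*d(L(M)) = {intM u vol=0}} to obtain $u\in L^p_{2,\beta}$ with $d^*du=-f$, and set $\phi_D=\phi_1+u$. Then $\phi_D$ is harmonic away from the $p_i$ and $\Delta H^D=\delta_{\mathrm{current}}$.

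\textbf{Step 4 (asymptotics).} Near $p_i$, $u$ is $C^{1,\alpha}$ by elliptic regularity, since $f\in L^p$ for $p$ near $3$. One can improve $f$ to $O(r^{-1})$ or better by using a refined local Green's function. Then $u=u(p_i)+O(r_i)$, and we define $m_i^I:=u(p_i)$.

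The main obstacle is the end behaviour of $u$. Since $f$ decays exponentially and $u\in L^p_{2,\beta}$ with small negative $\beta$, $u$ may still grow slightly on the cylinder. Expanding $u$ in eigenfunctions of $\Delta_\Sigma$, the solution is of the form $a_i+b_i\rho+(\text{exponentially decaying})$ on each end. Here $\beta^*$ involves $\sqrt{\lambda_1(\Sigma)}$, which governs the first nonconstant mode. The constants $a_i$ are absorbed by adjusting $m_i^O$ in the ansatz. The linear terms $b_i$ must vanish, since $\beta$ near $0$ rules out growth faster than $e^{-\beta\rho/p}$ only weakly, and the flux computation already accounts for the correct charges. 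If they do not vanish, we add to the ansatz the harmonic functions with prescribed linear growth on the ends; these are constructed the same way with zero total flux. The error is exponentially small, which is stronger than the claimed $O(\rho^{-1})$.
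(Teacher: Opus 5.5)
Your overall route is the paper's: a cut-off singular-plus-linear ansatz, then the flux computation that turns the charge balance into $\int_M f\vol_g=0$, then the lemma on $(d^*d)^p_{l+2,\beta}$ for the correction, and finally the asymptotics. The genuine gap is your end analysis in Step~4, which reads the weight backwards. In the paper's convention $\Vert u\Vert^p_{L^p_{k,\beta}}=\sum_i\int_M e^{-\beta\rho}\lvert\nabla^i u\rvert^p\vol_g$, so for $\beta<0$ the weight $e^{\lvert\beta\rvert\rho}$ grows. Neither constants nor $\rho$ lie in $L^p_{0,\beta}$ on an end; this is exactly how the lemma's proof excludes constants from the kernel. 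Hence the correction $u$ automatically has $a_i=b_i=0$. Interior estimates on the slabs $\varphi([T,T+1]\times\Sigma_i)$, together with the exponential decay of $f$, then give pointwise exponential decay of $u$. This is all the paper uses (``$v=O(\rho^{-1})$ because $\beta<0$''), so there is no obstacle at the ends.

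Your proposed patches would not work in any case. Re-choosing $m_i^O$ in the ansatz changes $f$ and therefore $u$, so ``absorbing $a_i$'' is circular. The sentence about the $b_i$ vanishing is not an argument, and adding harmonic functions with linear growth shifts the constants as well. In fact, for $l\ge 2$ the $m_i^O$ cannot be prescribed independently. On $\R\times\Sigma$ (two ends) with all $k_i^I=0$, $\phi_D$ extends to a harmonic function of at most linear growth, hence equals $a+bt$, which forces $m_1^O=m_2^O$. The paper's lemma overreaches at the same point: at the dual weight $-\beta>0$, the kernel of $d^*d$ is the $l$-dimensional space of harmonic functions asymptotic to $a_i+b_i\rho$, not just the constants. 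So the argument, yours and the paper's, is sound for $l=1$. That is the only case used later, since $b^2(M)=0$ forces a single end.

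The second, smaller gap is regularity near $p_i$. With the bare ansatz $-k_i^I/(4\pi r_i)$ you have $f=O(r_i^{-1})$, which lies in $L^p$ only for $p<3$. For $p<3$, $W^{2,p}_{\mathrm{loc}}$ embeds into $C^{0,\alpha}$, not $C^{1,\alpha}$ (that needs $p>3$), so you only get $u=u(p_i)+O(r_i^{\alpha})$. The refinement you mention in passing is therefore necessary, and the paper builds it in from the start. It uses $\tilde u_i^I=\frac{1}{4\pi r_i}+\frac{1}{48\pi}\bigl(\frac{1}{r_i}\Ric(r_i\partial_{r_i},r_i\partial_{r_i})-Rr_i\bigr)$, whose Laplacian is $\delta_{p_i}$ plus a bounded function. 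Then $f\in L^\infty$ near $p_i$, the correction is $C^{1,\alpha}$ there, and the curvature terms are themselves $O(r_i)$, so they go into the error. (Your ``improve $f$ to $O(r^{-1})$'' should read $O(1)$.)
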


\begin{proof}
    We can always take a sufficiently small number $\varepsilon >0$ such that 
    $B_{2 \varepsilon} (p_i ) \cap B_{2 \varepsilon} (p_j ) = \emptyset$ 
    for all $i \ne j$ and $\bigsqcup\limits_{i=1}^{s} B_{2 \varepsilon} (p_i ) \subset K$ 
    (Subsection~\ref{subsection: Mass of Monopoles and Large Higgs Fields}). 

    Using the geodesic normal coordinates, the scalar curvature $R$ and the Ricci tensor satisfy
    \begin{equation*}
        \Delta [ \frac{1}{r_i} + \frac{1}{12} (\frac{1}{r_i} \Ric (r_i \frac{\partial}{\partial r_i}, r_i \frac{\partial}{\partial r_i} ) - R r_i ) ] 
        = O (1 ), 
        \qquad \text{as } r_i \to 0. \label{equation: Delta [ 1/r - 1/12 R(partial r, partial r) r + 1/12 Rr] = O(1)}
    \end{equation*}
    Set
    \begin{equation*}
        \Tilde{u}_i^I(r_i) := \frac{1}{4\pi r_i} 
        + \frac{1}{48\pi}( \frac{1}{r_i} \Ric(r_i\frac{\partial}{\partial r_i}, r_i\frac{\partial}{\partial r_i}) - R r_i ).
    \end{equation*}
    Then, as a distribution on $B_{2\varepsilon}(p_i)$, $\Delta \Tilde{u}_i^I = \delta_{p_i} + \psi_i$, 
    where $\psi_i$ is continuous on $\overline{B_{2\varepsilon}(p_i)}$ and smooth away from $p_i$, 
    with $\psi_i = O(1)$.  
    Define the extension to $M$ by
    \begin{equation*}
        u_i^I := - k_i^I  \chi_i^I \Tilde{u}_i^I,
    \end{equation*}
    where $\chi_i^I$ is a smooth cut-off function satisfying $\chi_i^I \equiv 1$ on $B_{\varepsilon}(p_i)$ and 
    $\supp \chi_i^I \subset B_{2\varepsilon}(p_i)$. Consequently,
    \begin{equation}\label{eq:distr-uI}
        \Delta u_i^I = - k_i^I \delta_{p_i} + \varphi_i,
    \end{equation}
    with $\varphi_i \in C^0(M)$ supported in $\overline{B_{2\varepsilon}(p_i)}$ and smooth on 
    $M\setminus\{p_i\}$.

    For each end $\Sigma_i$, we introduce
    \begin{equation*}
        u_i^O := \chi_i^O ( m_i^O + \frac{k_i^O}{\vol(\Sigma_i)} \rho ),
    \end{equation*}
    where the smooth cut‑off $\chi_i^O$ satisfies
    \begin{align*}
        \begin{cases}
            \chi_i^O = 1, & \text{in $\varphi ([ 2, \infty ) \times \Sigma_i )$,} \\
            0 \leq \chi_i^O \leq 1, & \text{in $\varphi ([ 1, 2] \times \Sigma_i )$,} \\
            \chi_i^O = 0, & \text{otherwise.}
        \end{cases}
    \end{align*}
    From the asymptotic cylindrical metric in Definition~\ref{definition: Asymptotically Cylindrical 3-Manifolds}, 
    we obtain
    \begin{equation*}
        \Delta u_i^O = O(e^{-\frac12\delta\rho}) = O(\rho^{-1}), \qquad \text{as } \rho\to\infty.
    \end{equation*}

    Now set
    \begin{equation*}
        u := \sum_{i=1}^{s} u_i^I + \sum_{i=1}^l u_i^O .
    \end{equation*}
    By \eqref{eq:distr-uI}, the distributional Laplacian of $u$ is
    \begin{equation*}\label{eq:distr-u}
        \Delta u = \delta_{\mathrm{current}} + f,
        \qquad
        f := \sum_{i=1}^{s} \varphi_i + \sum_{i=1}^l \Delta u_i^O .
    \end{equation*}
    The function $f$ is smooth on $M\setminus\{p_1,\dots,p_s\}$ and 
    belongs to $L^2_{0,\beta}(M)$ for some $\beta<0$ with $\beta\notin\mathcal{D}_{(d^*d)_0}$.

    We claim that $\int_M f \vol_g = 0$. On $M_{\varepsilon,R} := M \setminus \{ \bigsqcup\limits_{i=1}^{s} B_{\varepsilon}(p_i) 
        \bigcup \varphi(\bigsqcup\limits_{i=1}^l[R,\infty)\times\Sigma_i) \}$, the pointwise Laplacian satisfies $\Delta u = f$.  Using 
    $\Delta = -\operatorname{div}\nabla$, Stokes' theorem gives
    \begin{align*}
        \int_{M_{\varepsilon,R}} f\vol_g 
        &= \int_{M_{\varepsilon,R}} \Delta u \vol_g 
        = -\int_{\partial M_{\varepsilon,R}} \langle du, \nu\rangle \vol_{\partial M_{\varepsilon,R}} \nonumber\\
        &= -\sum_{i=1}^{s} \int_{\partial B_{\varepsilon}(p_i)} \langle du_i^I, \nu\rangle 
           -\sum_{i=1}^l \int_{\varphi(\{R\}\times\Sigma_i)} \langle du_i^O, \nu\rangle,
    \end{align*}
    where $\nu$ is the outward unit normal. On $\partial B_{\varepsilon}(p_i)$, $\nu = -\frac{\partial}{\partial r_i}$ and
    \begin{equation*}
        u_i^I = - \frac{k_i^I}{4\pi r_i} + O(r_i) \implies
        \frac{\partial}{\partial r_i} u_i^I = \frac{k_i^I}{4\pi r_i^2} + O(1),
        \quad
        \langle du_i^I,\nu\rangle = -\frac{\partial}{\partial r_i} u_i^I = - \frac{k_i^I}{4\pi r_i^2} + O(1).
    \end{equation*}
    Hence
    \begin{equation}
        \lim_{\varepsilon\to0} \int_{\partial B_{\varepsilon}(p_i)} \langle du_i^I,\nu\rangle = - k_i^I .
    \end{equation}
    On the cylindrical ends, $\nu = +\frac{\partial}{\partial \rho}$ and
    \begin{equation*}
        u_i^O \sim m_i^O + \frac{k_i^O}{\vol(\Sigma_i)}\rho \implies
        \frac{\partial}{\partial \rho} u_i^O = \frac{k_i^O}{\vol(\Sigma_i)} + O(e^{-\delta\rho}),
        \quad
        \langle du_i^O,\nu\rangle = \frac{k_i^O}{\vol(\Sigma_i)} + O(e^{-\delta\rho}).
    \end{equation*}
    Hence
    \begin{equation}
        \lim_{R\to\infty} \int_{\varphi(\{R\}\times\Sigma_i)} \langle du_i^O,\nu\rangle = k_i^O .
    \end{equation}
    Putting everything together,
    \begin{equation*}
        \int_M f \vol_g =\lim_{\varepsilon\to 0}\lim_{R \to \infty}\int_{M_{\varepsilon,R}} f\vol_g = \sum_{i=1}^{s} k_i^I - \sum_{i=1}^l k_i^O = 0,
    \end{equation*}
    where the last equality follows from the hypothesis.

    Because $\int_M f \vol_g = 0$ and $f\in L^2_{0,\beta}(M)$ with $\beta\notin \mathcal{D}_{(d^*d)_0}$, 
    Lemma~\ref{lemma: d*d(L(M)) = {intM u vol=0}} provides a function 
    $v \in L^2_{2,\beta}(M)$ such that $\Delta v = -f$.  Moreover, by weighted elliptic regularity, $v$ is continuous and in fact $v = O(\rho^{-1})$ as $\rho\to\infty$ 
    because $\beta<0$.

    Define $\phi_D := u + v$.  Then, distributionally,
    \begin{equation*}
        \Delta \phi_D = \Delta u + \Delta v = (\delta_{\mathrm{current}} + f) - f = \delta_{\mathrm{current}},
    \end{equation*}
    and on the punctured manifold $M\setminus\{p_1,\dots,p_s\}$ we have $\Delta \phi_D = 0$, 
    so $\phi_D$ is harmonic.  Near $p_i$, $v$ is $C^1$ by elliptic regularity, hence
    \begin{equation*}
        \phi_D = u_i^I+v = - \frac{k_i^I}{4\pi r_i} + O(r_i) + v,
    \end{equation*}
    and the limit
    \begin{equation*}
        m_i^I := \lim_{p\to p_i}( \phi_D(p) + \frac{k_i^I}{4\pi r_i} )
    \end{equation*}
    exists and gives $\phi_D = m_i^I - \frac{k_i^I}{4\pi r_i} + O(r_i)$.  
    On the end $U(\Sigma_i)$, using $v = O(\rho^{-1})$,
    \begin{equation*}
        \phi_D = m_i^O + \frac{k_i^O}{\vol(\Sigma_i)} \rho + v 
               = m_i^O + \frac{k_i^O}{\vol(\Sigma_i)} \rho + O(\rho^{-1}).
    \end{equation*}

    Finally, for any $\psi \in C_c^\infty(M)$, the same integration by parts argument yields
    \begin{equation*}
        \int_M \phi_D \Delta\psi\vol_g 
        = \lim_{\varepsilon\to 0}\lim_{R \to \infty} \int_{M_{\varepsilon,R}} \phi_D \Delta\psi\vol_g 
        = - \sum_{i=1}^{s} k_i^I \psi(p_i) = \delta_{\mathrm{current}}(\psi),
    \end{equation*}
    which is exactly $\Delta H^D = \delta_{\mathrm{current}}$ in the sense of distributions.
\end{proof}

\subsection{Mass of Monopoles and Large Higgs Fields}\label{subsection: Mass of Monopoles and Large Higgs Fields}

From now on, we may assume $\{p_1,\cdots,p_s\}:=\{p_1,\cdots,p_{n}\}$ with $n \geq s$, so that $k_1^I=\cdots=k_{n}^I=1$. Therefore, we can glue a BPS monopole of charge $1$ at each of $\{p_1,\cdots,p_n\}$, all of which carry the same charge $1$.

Recall that $H^0 ( \Sigma; \R ) \cong \R^l$ with $l= b^0(\Sigma) = \dim H^0(\Sigma;\R)$. Define the \emph{mass} $\vec{m}^O$ and the \emph{local mass} $\vec{m}^I$ of the Dirac monopole $(a_D,\phi_D)$ by
\begin{equation*}
    \vec{m}^O=(m_1^O,\cdots,m_l^O)\in (\R_{\geq 0})^l, \qquad
    \vec{m}^I=(m_1^I,\cdots,m_{n+1}^I) \in (\R_{\geq 0})^{n+1}.
\end{equation*}
One method to construct a new Dirac monopole $(a^\lambda_D,\phi_D^\lambda)$ with mass
\begin{equation*}
    \vec{m}^O +(\lambda-1):=(m_1^O+\lambda-1,\cdots,m_l^O+\lambda-1)
\end{equation*}
and local mass
\begin{equation*}
    \vec{m}^I+(\lambda-1):=(m_1^I+\lambda-1,\cdots,m_n^I+\lambda-1)
\end{equation*}
is scaling:
\begin{equation*}
    (a_D^{\lambda}(x),\phi_D^{\lambda}(x)) := (a_D(x), \phi_D(x)+ (\lambda-1)),
\end{equation*}
where $\lambda$ is a scaling factor.

In this paper, this scaling factor $\lambda=\lambda(\overline{m})$ is related to the following definition of $\overline{m}$. Define the \emph{average mass} $\overline{m}$ of the Dirac monopole $(a_D,\phi_D)$ by
\begin{equation*} 
    \overline{m} = \frac{1}{l} ( m_1^O + \cdots + m_{l}^O ) \in \R_{\geq 0}.
\end{equation*}
Note that, given $\phi_D$ with mass $\vec{m}^O$ and local mass $\vec{m}^I$, $\phi^{\overline{m}}_D :=\phi_D + \overline{m}-1$ is a harmonic function with mass $\vec{m}^O+ \overline{m}-1$ and local mass $\vec{m}^I + \overline{m}-1$.

The following lemma implies that the Dirac monopole has a sufficiently large Higgs field outside every small singularity neighborhood.

\begin{lemma}\label{lemma:large Higgs field}
    Let $(M,g)$ be an asymptotically cylindrical $3$-manifold with $b^2(M)=0$. If the average mass $\overline{m}$ is sufficiently large, then 
    \begin{align*}
        \lvert \phi_D^{\overline{m}} \rvert \geq \frac{\overline{m}}{2} 
    \end{align*}
    on $U ( \varepsilon ) = \overline{M \setminus \bigsqcup\limits_{i=1}^{n} B_{\varepsilon}(p_i)}$, where $\varepsilon = \sqrt{\frac{5}{8\pi \overline{m}}}=O(\overline{m}^{-\frac{1}{2}})$. 
\end{lemma}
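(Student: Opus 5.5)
Since $b^2(M)=0$, the manifold has a single end ($l=1$), so $\overline{m}=m^O_1$ and the total charge at the end is $k^O_1=n$. The plan is to compare $\phi_D^{\overline m}=\phi_D+\overline m-1$ with $\overline m$ on three regions: a neighbourhood of the end, the compact part away from the singularities, and the annuli $\varepsilon\le r_i\le \varepsilon_0$ around each $p_i$, where $\varepsilon_0$ is the fixed radius from Proposition~\ref{proposition:existence of harmonic function PhiD}. Two structural facts drive the argument. First, $\phi_D$ depends on $m^O_1$ only through the additive term $u^O_1$, so $\phi_D-\chi^O_1 m^O_1$ is a fixed harmonic-type function independent of $\overline m$. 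Second, the singular part enters $\phi_D$ with a negative sign. It is therefore cleaner to fix a reference Dirac monopole $\phi_D^{(0)}$ with $m_1^O=0$; then $\phi_D^{\overline m}$ is, up to the normalization convention, $\phi_D^{(0)}+\overline m + O(1)$, where the $O(1)$ term is independent of $\overline m$ on compact sets.

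\emph{End region.} On $\varphi([R_0,\infty)\times\Sigma)$ the expansion gives $\phi_D^{(0)}=\frac{n}{\vol(\Sigma)}\rho+O(\rho^{-1})$. This is bounded below by a constant $-C_1$ independent of $\overline m$, and is in fact positive for $R_0$ large. Hence $\phi_D^{\overline m}\ge \overline m - C_1-1\ge \overline m/2$ there once $\overline m\ge 2C_1+2$.

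\emph{Compact region.} On $K'=\{\rho\le R_0\}\setminus\bigsqcup_i B_{\varepsilon_0}(p_i)$, the function $\phi_D^{(0)}$ is continuous on a compact set, hence bounded by some $C_2$ independent of $\overline m$. This gives $\phi_D^{\overline m}\ge \overline m - C_2-1\ge\overline m/2$ for large $\overline m$.

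\emph{Annuli.} This is the delicate step and the place where the specific choice $\varepsilon^2=\frac{5}{8\pi\overline m}$ must be used. On $B_{\varepsilon_0}(p_i)$ we have $\phi_D^{(0)}=m_i^{I,(0)}-\frac{k_i^I}{4\pi r_i}+O(r_i)$, with $m_i^{I,(0)}$ and the $O(r_i)$ constant independent of $\overline m$. Thus for $r_i\ge\varepsilon$,
\[
\phi_D^{\overline m}\ge \overline m - \frac{k_i^I}{4\pi\varepsilon} - C_3 = \overline m - k_i^I\sqrt{\frac{\overline m}{10\pi}}\cdot\frac{\sqrt{8\pi}}{4\pi}\sqrt{2\pi}\,(\ldots) - C_3 .
\]
The essential point is that $\frac{1}{4\pi\varepsilon}=\frac{1}{4\pi}\sqrt{\frac{8\pi\overline m}{5}}=O(\overline m^{1/2})$. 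This is $o(\overline m)$, so $\phi_D^{\overline m}\ge \overline m-C\overline m^{1/2}-C_3\ge\overline m/2$ for $\overline m$ large, with $k_i^I=1$ by the standing convention. Monotonicity of $-\frac{1}{4\pi r}$ in $r$ shows that the worst case on the annulus is $r_i=\varepsilon$. Since $\phi_D^{\overline m}$ is positive on all three regions, $|\phi_D^{\overline m}|\ge\overline m/2$ on $U(\varepsilon)$.

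The main obstacle is justifying that the constants $C_1,C_2,C_3$ (the local masses $m_i^I$ and the error terms) are genuinely independent of $\overline m$ after subtracting $\overline m$. I would handle this through the linearity of the construction in Proposition~\ref{proposition:existence of harmonic function PhiD}. The term $\chi^O_1 m_1^O$ contributes $f$-part $m^O_1\Delta\chi^O_1$, and the resulting correction $v$ is $m^O_1$ times a fixed function $w$ solving $\Delta w=-\Delta\chi_1^O$. Because constants are harmonic and $\chi^O_1-1$ decays appropriately, the uniqueness part of Lemma~\ref{lemma: d*d(L(M)) = {intM u vol=0}} forces $\chi^O_1+w\equiv1$. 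So the $m^O_1$-dependence of $\phi_D$ is exactly the additive constant $m^O_1$, which justifies the reduction to $\phi_D^{(0)}$ used above.
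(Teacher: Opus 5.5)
Your proof is correct, but it is organized differently from the paper's.

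\textbf{How the paper argues.} It uses $\overline{m}$-dependent regions and never bounds $\phi_D$ directly on the compact part. It shows $\phi_D^{\overline{m}}\geq\overline{m}/2$ on $\{\rho=R\}$, with $R$ proportional to $\overline{m}$. It shows the same bound on $\partial B_\varepsilon(p_i)$ by solving $-\frac{1}{4\pi r_i}+\frac{9}{10}\overline{m}\geq\frac{\overline{m}}{2}$, i.e.\ $r_i\geq\frac{5}{8\pi\overline{m}}\ll\varepsilon$. It then covers the whole neck by the minimum principle for the harmonic function $\phi_D^{\overline{m}}$.

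\textbf{How your route differs.} You fix $R_0$ and $\varepsilon_0$ once and use continuity of a fixed reference function $\phi_D^{(0)}$ on a fixed compact set. This avoids the maximum principle, but it requires every constant to be uniform in $\overline{m}$. Your last paragraph supplies exactly that. Since $1-\chi_1^O$ is compactly supported and $\Delta(1-\chi_1^O)=-\Delta\chi_1^O$, injectivity of $(d^*d)^p_{l+2,\beta}$ for $\beta<0$ gives $\phi_D=\phi_D^{(0)}+m_1^O$.

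\textbf{Comparison.} The paper only asserts that $m_i^I$ and the implicit $O(\cdot)$ constants are independent of $\overline{m}$. Your computation shows that $m_i^I$ in fact shifts by $m_1^O$. This is harmless, since it only raises the lower bound, so your treatment of uniformity is the more careful one. The paper's minimum-principle step is more economical, because it needs control only on the boundary of the neck.

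\textbf{Minor fixes to your write-up.}
In your first paragraph, it is $\phi_D-m_1^O$, not $\phi_D-\chi_1^O m_1^O$, that is independent of $\overline{m}$; your last paragraph proves the correct version.
The displayed chain in the annulus step is garbled. It should simply read $\frac{1}{4\pi\varepsilon}=\sqrt{\overline{m}/(10\pi)}$, which holds exactly.
With the paper's literal conventions, $\phi_D^{\overline{m}}=\phi_D^{(0)}+2\overline{m}-1$, which only strengthens the bounds you use.
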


\begin{proof}
    Since $k_1^I=\cdots=k_n^I=+1$, we know that the value of Higgs field tends to negative infinity at $\{p_1,\cdots,p_n\}$ and negative infinity at the cylindrical end $\Sigma$, respectively.
    \begin{enumerate}[label=\textbf{(\arabic*)},listparindent=\parindent]

            \item \textbf{Near the end}

    Since $k_1^O=k_1^I+\cdots+k_{n}^I=n> 0$ in our assumption and the number of singularities $\{p_1,\cdots,p_{n}\}$ is finite, by increasing $\overline{m}$, we can always choose $R=\frac{\vol(\Sigma)}{2k_1^O} \overline{m}$ to be sufficiently large, so that the unique end $\varphi([R,\infty)_r\times \Sigma)$ contains no singularity and
    \begin{align*}
         \phi_D^{\overline{m}}(r) &= m_1^O +\frac{k_1^O}{\vol (\Sigma)}r+\overline{m}-1 + O(r^{-1}) \\
         &\geq  m_1^O +\frac{k_1^O}{\vol(\Sigma)} R+\overline{m}-1 -\frac{C}{r} \qquad (\text{constant }C>0)\\
         &\geq m_1^O +\frac{\overline{m}}{2}+\overline{m}-1 - \frac{C}{r} \\
         &\geq \frac{\overline{m}}{2}, \qquad
         (\text{as } \overline{m} \to \infty, r \to \infty)
    \end{align*}
    for all $r \in [R,\infty)$.
    
        \item \textbf{Near singularities}
        
         On each neighborhood $B_\varepsilon(p_i)$ with sufficiently small $\varepsilon < \delta(M,g)$, we define $\phi_D(r_i)=-\frac{1}{4\pi r_i}+m_i^I+O(r_i)$ with local mass $\vec{m}^I$.
        By a scaling method, we construct a new Dirac monopole
        \begin{equation*}
            \phi^{\overline{m}}_D(r_i) = -\frac{1}{4\pi r_i}+m_i^I+\overline{m}-1+O(r_i) 
        \end{equation*}
        with local mass $\vec{m}^I+\overline{m}-1$. Since $m_i^I$ is independent of $\overline{m}$ and $r_i<\varepsilon<1$, the term $(O(r_i)-1)$ is bounded below by a constant, which we absorb into $m_i^I$ to get a new constant $m^{II}_i$ as follows:
        \begin{equation*}
            \phi^{\overline{m}}_D(r_i) \geq -\frac{1}{4\pi r_i}+m_i^{II}+\overline{m},
        \end{equation*}
        where the right hand side is increasing along the radial coordinate.

        From now on, by increasing $\overline{m}$, we want to  find $\delta < \varepsilon$ such that $\phi_D^{\overline{m}}(r_i)\geq \frac{\overline{m}}{2}$ for all $r_i \in (\delta,\varepsilon)$. Since $m_i^I$'s do not change with $\overline{m}$, as we increase $\overline{m}$, we have inequalities
        \begin{equation*}
            -\frac{1}{4\pi r_i}+m_i^{II}+\overline{m} \geq -\frac{1}{4\pi r_i}+\frac{9}{10} \overline{m} \geq \frac{\overline{m}}{2},
        \end{equation*}
        whose solutions
        \begin{equation*}
            r_i \geq \frac{5}{8\pi \overline{m}} ,\qquad 
            \overline{m} \geq 10\max\{ -m_1^{II},\cdots,-m_n^{II}\}
        \end{equation*}
        satisfies $\phi_D^{\overline{m}}(r_i) \geq \frac{\overline{m}}{2}$. For sufficiently large $\overline{m}$, let $\frac{5}{8\pi \overline{m}}=\delta\ll \varepsilon=\sqrt{\frac{5}{8\pi \overline{m}}}$. Then we have
        \begin{equation*}
            \phi_D^{\overline{m}} |_{\partial  B_\varepsilon(p_i)} \geq \phi_D^{\overline{m}} |_{\partial  B_\delta(p_i)}\geq \frac{\overline{m}}{2}.
        \end{equation*}

    \item\textbf{On the compact neck}
    
    Since $\phi_D^{\overline{m}}$ is a harmonic function on the compact neck $M \setminus \{\bigsqcup\limits_{i=1}^n B_{\varepsilon}(p_i) \bigcup \varphi([R,\infty) \times \Sigma)\}$, the maximum principle implies that the maximum or minimum can only be attained at the boundaries:
    \begin{equation*}
        \phi_D^{\overline{m}} |_{\bigsqcup\limits_{i=1}^n \partial B_{\varepsilon}(p_i)} \geq \frac{\overline{m}}{2}, \qquad   \phi_D^{\overline{m}} |_{ \varphi(R \times \Sigma)} \geq \frac{\overline{m}}{2},
    \end{equation*}
    so that
    \begin{equation*}
         \phi_D^{\overline{m}} |_{M \setminus \{\bigsqcup\limits_{i=1}^n B_{\varepsilon}(p_i) \bigcup \varphi([R,\infty) \times \Sigma)\}} \geq \frac{\overline{m}}{2}.
    \end{equation*}
\end{enumerate}
Therefore, this lemma has been proved.
\end{proof}

Our gluing method is invalid if the singular point $p_i$ $(i =1,\ldots, {n})$ lies very close to any other singular point $p_j$ ($j \neq i$). To make it valid, we increase the average mass $\overline{m}$ so that for all $i \neq j$,
\begin{equation*}
    \operatorname{dist} ( p_i, p_j ) \geq \varepsilon = \sqrt{\frac{5}{8\pi \overline{m}}}.
\end{equation*}
 
\subsection{Construction of $\SU(2)$-Dirac Monopoles (please Kimura check...)}\label{subsection: Construction of SU(2)-Dirac Monopoles}

In this subsection, we construct $\SU(2)$-Dirac monopoles with singularities $S_p=\{p_1,\cdots,p_n\}$ on the asymptotically cylindrical $3$-manifold.

\begin{theorem}\label{theorem:well definedness of b(p,[mO])}
    The class $b \left( p_1, \cdots, p_n; \left[ \vec{m}^O \right] \right) \in H^2 \left( M; \R / 2 \pi \Z \right)$ is uniquely determined by $\left[ * d \phi_D \right] \in H^2 \left( M \setminus S_p; \R / 2 \pi \Z \right)$. 
    Moreover, there exists a principal $\U(1)$-bundle $L' \to M \setminus S_p$ such that $\left[ * d \phi_D \right]$ is the curvature of a connection $a_D$ on $L'$, and $\left( a_D, \phi_D \right)$ is a $\U(1)$-Dirac monopole with singularities $p_1, \cdots, p_n$ and charges $k_1^I=+1, \cdots, k_n^I=+1$, if and only if
    \begin{equation*}
        b \left( p_1, \cdots, p_n; \left[ \vec{m}^O \right] \right) = 0.
    \end{equation*}
    This monopole is unique up to gauge transformations and constant shifts of the Higgs field. If $b^2(M)=0$, such an $L'$ always exists.
\end{theorem}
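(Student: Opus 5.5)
We need the class $b(p_1,\dots,p_n;[\vec m^O])$ to be well defined, to be exactly the obstruction to realizing $*d\phi_D$ as a curvature, and to vanish when $b^2(M)=0$. The plan is the standard Chern–Weil/Dirac quantization argument for $\U(1)$-bundles, adapted to the punctured ACyl manifold $M\setminus S_p$.

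\textbf{Step 1: closedness and periods.} Since $\phi_D$ is harmonic on $M\setminus S_p$ by Proposition~\ref{proposition:existence of harmonic function PhiD}, we have $d*d\phi_D = -*\Delta\phi_D=0$. Thus $*d\phi_D$ is a closed $2$-form there, and defines a de Rham class $[*d\phi_D]\in H^2(M\setminus S_p;\R)$. We then project it to $H^2(M\setminus S_p;\R/2\pi\Z)$. With a suitable normalization, $2\pi$ times $*d\phi_D$, consistent with the $\frac{1}{4\pi r_i}$ model, this class is only sensitive to periods mod $2\pi$.

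We next compute its periods on a generating set of $H_2(M\setminus S_p)$. By Mayer–Vietoris, $H_2(M\setminus S_p)\cong H_2(M)\oplus\Z^n$, where the second summand is generated by the small spheres $\partial B_\varepsilon(p_i)$. From the expansion $\phi_D=m_i^I-\frac{1}{4\pi r_i}+O(r_i)$, the flux through $\partial B_\varepsilon(p_i)$ equals $k_i^I=1$, computed exactly as in the Stokes argument of Proposition~\ref{proposition:existence of harmonic function PhiD}. After normalization this becomes $2\pi$, which is trivial mod $2\pi\Z$. Hence the $\R/2\pi\Z$-class restricts trivially to the sphere factors and descends, via the splitting, to a class
\begin{equation*}
    b(p_1,\dots,p_n;[\vec m^O])\in H^2(M;\R/2\pi\Z).
\end{equation*}
This class is given by the periods of $*d\phi_D$ on cycles in $H_2(M)$ chosen disjoint from $S_p$. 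Well-definedness means independence of the choice of representing cycles: two representatives differ by sums of the $\partial B_\varepsilon(p_i)$, whose periods lie in $2\pi\Z$. The dependence on the masses only through $[\vec m^O]$ comes from the fact that shifting $\phi_D$ by constants does not change $d\phi_D$. For the existence statement itself, $\phi_D$ is unique up to constants by the injectivity of $d^*d$ in Lemma~\ref{lemma: d*d(L(M)) = {intM u vol=0}}.

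\textbf{Step 2: the equivalence.} We use the classical fact that a closed real $2$-form $F$ is the curvature of a $\U(1)$-connection on some line bundle if and only if its class lies in the image of $H^2(\,\cdot\,;2\pi\Z)\to H^2(\,\cdot\,;\R)$. Equivalently, its image in $H^2(\,\cdot\,;\R/2\pi\Z)$ vanishes; this follows from the Bockstein long exact sequence. Applying this on $M\setminus S_p$, combined with Step 1, shows that the obstruction is exactly $b=0$.

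Given such $L'$ and $a_D$ with $F_{a_D}=*d\phi_D$, the pair $(a_D,\phi_D)$ solves the abelian Bogomolny equation, since $d_{a_D}\phi_D=d\phi_D$ for the trivial adjoint action. Its charges at the $p_i$ are the degrees of $L'$ on the spheres $\partial B_\varepsilon(p_i)$, which are $+1$ by the flux computation.

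\textbf{Step 3: uniqueness and the case $b^2(M)=0$.} Two connections with the same curvature on bundles with the same Chern class differ by a flat connection. This leaves an $H^1(M\setminus S_p;\R/2\pi\Z)$ ambiguity, which should be absorbed into gauge equivalence. The Higgs field is unique up to constants, as noted in Step 1.

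If $b^2(M)=0$, then $H^2(M;\R/2\pi\Z)=\operatorname{Hom}(H_2(M),\R/2\pi\Z)\oplus\operatorname{Ext}(H_1(M),\R/2\pi\Z)$. The Ext term vanishes because $\R/2\pi\Z$ is divisible, and the Hom term kills the torsion of $H_2(M)$. Hence $b=0$ automatically.

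\textbf{Main obstacle.} I expect the main obstacle to be the careful homological bookkeeping in Step 1. The difficulty is making the descent from $M\setminus S_p$ to $M$ canonical, and tracking the normalization constant between $*d\phi_D$ and $2\pi\Z$-periods so that the unit flux at each $p_i$ is exactly trivial mod $2\pi$. I would handle this via the long exact sequence of the pair $(M,M\setminus S_p)$, using excision $H^3(M,M\setminus S_p)\cong\bigoplus_i H^3(B,B\setminus p_i)$.
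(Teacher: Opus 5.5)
Your Steps 1--2 are correct and are the paper's argument in dual form. The paper obtains $0\to H^2(M)\xrightarrow{j^2}H^2(M\setminus S_p)\to\bigoplus_i H^2(U_i)\to 0$ from the long exact sequence of $(M,M\setminus S_p)$ and excision, and defines $b$ by $j^2(b)=[*d\phi_D]$. You define $b$ through periods on lifts of classes in $H_2(M)$, which is the same object under $H^2(\,\cdot\,;\R/2\pi\Z)\cong\operatorname{Hom}(H_2(\,\cdot\,),\R/2\pi\Z)$.

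Your concern about the factor $2\pi$ is justified. With $\phi_D\sim m_i^I-\frac{1}{4\pi r_i}$, the flux of $*d\phi_D$ through $\partial B_\varepsilon(p_i)$ is $1$, so the paper's identity $\frac{1}{2\pi}\int_{\partial B_\varepsilon(p_i)}*d\phi_D=k_i^I$ is off by $2\pi$. You should then keep your convention in Step 2. If integrality is imposed on $2\pi\,*d\phi_D$, the abelian Bogomolny equation holds for $(a_D,2\pi\phi_D)$, not with $F_{a_D}=*d\phi_D$.

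Two parts of Step 3 fail as written.

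First, for $b^2(M)=0$ you claim the Hom term ``kills the torsion of $H_2(M)$''. This is false, since $\operatorname{Hom}(\Z/k,\R/2\pi\Z)\cong\Z/k$. What you need is that $H_2(M;\Z)$ is torsion-free. This holds because $M$ retracts onto a compact oriented $3$-manifold $\overline M$ with nonempty boundary, and $H_2(\overline M;\Z)\cong H^1(\overline M,\partial\overline M;\Z)$ is torsion-free. Equivalently $H^3(M;\Z)=0$, which is the paper's route via the Bockstein sequence $0=H^2(M;\R)\to H^2(M;\R/2\pi\Z)\to H^3(M;\Z)=0$.

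Second, the $H^1(M\setminus S_p;\R/2\pi\Z)\cong\operatorname{Hom}(H_1(M;\Z),\R/2\pi\Z)$ ambiguity you identify cannot be absorbed into gauge equivalence. Gauge transformations preserve the holonomy of a $\U(1)$-connection. A flat twist with nonzero torsion Chern class even changes the isomorphism class of $L'$. So uniqueness up to gauge and constant shifts holds exactly when $H_1(M;\Z)=0$. It fails, for example, when $M$ is the interior of $L(p,q)$ minus a ball with a cylindrical end: there $b^1=b^2=0$ but $H_1=\Z/p$. In general the uniqueness claim must be read modulo flat twists, which are precisely the twists by $\alpha$ the paper uses in the next theorem. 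The paper's proof does not address uniqueness at all, so this gap is in the statement as much as in your proposal.

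A smaller point: for uniqueness of $\phi_D$ up to constants, do not cite the injectivity of $d^*d$ on exponentially decaying weights directly. Instead note that the difference of two such Higgs fields has removable singularities and is a bounded harmonic function on $M$, hence constant by the flux argument.
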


\begin{proof}
    Set $U_i = B_\varepsilon(p_i) \setminus \{p_i\}$. We ask when $[* d \phi_D]$ can be realized as the curvature of a connection on a line bundle over $M \setminus S_p$ under $b^2(M)=0$. This happens exactly when it has integral periods in $H^2(M \setminus S_p;\R)$, i.e. $\left[ * d \phi_D \right]$ vanishes in $H^2(M\setminus S_p; \R/2\pi\Z)$ when $b^2(M)=0$.

    Since $* d \phi_D$ is closed on $M\setminus S_p$ from $\Delta\phi_D=0$, consider its class in $H^2(M\setminus S_p;\R)$. First construct an injective homomorphism $H^2(M;\R/2\pi \Z)\to H^2(M\setminus S_p;\R/2\pi\Z)$ to define $b(p_1,\cdots,p_n;[\vec{m}^O])$. The pair $(M,M\setminus S_p)$ gives the exact sequence
    \begin{equation}
        \begin{split}
            \cdots \to H^2(M, M\setminus S_p; \mathbb{R}) \xrightarrow{i^2} H^2(M; \mathbb{R}) \xrightarrow{j^2} H^2(M\setminus S_p; \mathbb{R}) \\
            \xrightarrow{\delta^2} H^3(M, M\setminus S_p; \mathbb{R}) \xrightarrow{i^3} H^3(M; \mathbb{R}) \to \cdots
        \end{split}
        \label{equation:long exact sequence of the pair (M,U)}
    \end{equation}
    For $(B_\varepsilon(p_i),U_i)$ we have the analogous sequence. Since $H^1(B_\varepsilon(p_i);\R)\cong H^2(B_\varepsilon(p_i);\R)\cong H^3(B_\varepsilon(p_i);\R)\cong0$, we get
    \begin{align*}
        H^1(U_i;\R) \cong H^2(B_\varepsilon(p_i),U_i;\R),\qquad H^2(U_i;\R) \cong H^3(B_\varepsilon(p_i),U_i;\R).
    \end{align*}
    By excision for $M = (M\setminus S_p)\cup \bigsqcup\limits_{i=1}^n B_\varepsilon(p_i)$,
    \begin{align*}
        H^2(M,M\setminus S_p;\R) \cong \bigoplus_{i=1}^n H^1(U_i;\R),\qquad 
        H^3(M,M\setminus S_p;\R) \cong \bigoplus_{i=1}^n H^2(U_i;\R).
    \end{align*}
    Since $H^1(U_i;\R)\cong H^3(M;\R)\cong0$, the long exact sequence reduces to
    \begin{equation}
        0 \to H^2(M; \mathbb{R}) \xrightarrow{j^2} H^2(M \setminus S_p; \mathbb{R}) \xrightarrow{\delta^2} \bigoplus_{i=1}^n H^2(U_i; \mathbb{R}) \to 0.
        \label{equation:0 to H2(M) to H2(U) to oplus H2(Ui) to 0}
    \end{equation}
    The same holds with coefficients $\Z$ or $\R/2\pi\Z$, yielding the injective map $j^2: H^2(M;\R/2\pi\Z)\to H^2(M\setminus S_p;\R/2\pi\Z)$.

    Next, $\left[ * d \phi_D \right]$ lies in the image of $j^2$. Indeed,
    \begin{equation*}
        \frac{1}{2\pi}\int_{\partial B_\varepsilon(p_i)} * d \phi_D
        = \frac{1}{2\pi}\int_{B_\varepsilon(p_i)} d * d \phi_D
        = k_i^I \equiv 0 \pmod{\Z},
    \end{equation*}
    so $\left[ * d \phi_D \right]=0$ in $H^2(U_i;\R/2\pi\Z)$. Hence it is in $\ker \delta^2$, and by exactness of (2) it is in $\operatorname{im} j^2$.

    Thus there is a unique $b(p_1,\cdots,p_n;[\vec{m}^O])\in H^2(M;\R/2\pi\Z)$ with $j^2(b) = \left[ * d \phi_D \right]$. A class in $H^2(M\setminus S_p;\R)$ is the curvature of a line bundle connection exactly when it has integral periods, that is, when it vanishes in $H^2(M\setminus S_p;\R/2\pi\Z)$. Therefore such an $L'$ exists iff $b=0$. In particular, if $b^2(M)=0$, then $b=0$ from $b^3(M)=0$ and exactness of $0 \to \Z \to \R \to \R / 2 \pi \Z \to 0$, so $L'$ always exists. 
\end{proof}

\begin{theorem}\label{theorem:construction of Dirac monopoles on SU(2) bundle}
    If $b^2(M)=0$, the $\U(1)$-Dirac monopole lifts to an $\SU(2)$-Dirac monopole on an $\SU(2)$-bundle, enabling gluing of scaled $\SU(2)$-BPS monopoles.

    More precisely, given $\alpha \in H^1(M;\R/2\pi\Z)$, construct a complex line bundle $L^F \to M$ with a flat connection $\nabla^F$. If $b(p_1,\cdots,p_n;[\vec{m}^O])=0$, there is an $\SU(2)$-Dirac monopole $(A_D,\Phi_D)$ on a reducible principal $\SU(2)$-bundle $P \to M\setminus S_p$, induced from $a_D$ on $L'$, the flat connection $\nabla^F$, and the Higgs field $\phi_D$.
\end{theorem}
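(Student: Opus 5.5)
The plan is to build the $\SU(2)$-Dirac monopole by the standard reducible lifting $L\oplus L^{-1}$. First, since $b^2(M)=0$, Theorem~\ref{theorem:well definedness of b(p,[mO])} gives $b(p_1,\cdots,p_n;[\vec{m}^O])=0$. Hence there is a $\U(1)$-bundle $L'\to M\setminus S_p$ with a connection $a_D$ whose curvature is $*d\phi_D$, up to the normalisation factor $i$ from $\mathfrak{u}(1)=i\R$. Thus $*F_{a_D}=d\phi_D$ on $M\setminus S_p$. Next, for the class $\alpha\in H^1(M;\R/2\pi\Z)\cong\operatorname{Hom}(\pi_1(M),\U(1))$, I take the associated holonomy representation and form the flat line bundle $L^F=\widetilde M\times_{\alpha}\mathbb C$ with its induced flat connection $\nabla^F$. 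This connection is unitary, so $F_{\nabla^F}=0$. Then set $L:=L'\otimes L^F|_{M\setminus S_p}$ with the tensor product connection $a:=a_D\otimes 1+1\otimes\nabla^F$. Its curvature is still $F_{a_D}$, so $(a,\phi_D)$ is again a $\U(1)$-Dirac monopole; twisting by $L^F$ just records the freedom in choosing the lift, and one may take $\alpha=0$ if $b^1(M)=0$.

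Next I define the rank-$2$ Hermitian bundle $E:=L\oplus L^{-1}$. It has trivial determinant $L\otimes L^{-1}\cong\mathbb C$, so its unitary frame bundle reduces to a principal $\SU(2)$-bundle $P\to M\setminus S_p$ with reduction $\U(1)\subset\SU(2)$ given by the diagonal torus. Put
\begin{equation*}
A_D:=\begin{pmatrix} a & 0\\ 0 & -a\end{pmatrix},\qquad \Phi_D:=\begin{pmatrix} i\phi_D & 0\\ 0 & -i\phi_D\end{pmatrix}.
\end{equation*}
Because the structure group reduces to the abelian torus, $\mathfrak g_P$ splits as $\underline{\R}\oplus L^{2}$, and $\Phi_D$ lies in the trivial summand. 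So $d_{A_D}\Phi_D=\operatorname{diag}(i\,d\phi_D,-i\,d\phi_D)$ and $F_{A_D}=\operatorname{diag}(F_a,-F_a)$, since $[A_D\wedge A_D]=0$ for diagonal forms. The Bogomolny equation~\eqref{equation: Bogomolny equation} then follows componentwise from $*F_a=i\,d\phi_D$. Since $\Phi_D$ is harmonic and covariantly nontrivial only along the torus, the pair is reducible, with stabiliser containing the torus.

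Finally I record the properties needed later for gluing. Near each $p_i$ the restriction of $L$ to $\partial B_\varepsilon(p_i)\cong S^2$ has degree $k_i^I=1$, by the flux computation in the proof of Theorem~\ref{theorem:well definedness of b(p,[mO])}. Since $L^F$ is flat, twisting does not change this degree. Hence $E|_{\partial B_\varepsilon(p_i)}\cong\mathcal O(1)\oplus\mathcal O(-1)$, which is the asymptotic bundle of a charge-$1$ BPS monopole, and $\Phi_D\sim\operatorname{diag}(i,-i)(m_i^I-\frac{1}{4\pi r_i})$ matches the BPS asymptotics after scaling. The same holds for $\Phi_D^{\overline m}$ with $\phi_D$ replaced by $\phi_D^{\overline m}$, because adding a constant does not affect $d\phi_D$. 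The main obstacle is being careful with the $2\pi$ and $i$ conventions, so that the integrality condition of Theorem~\ref{theorem:well definedness of b(p,[mO])} exactly matches the Chern–Weil normalisation of $F_{a_D}$, and that the reduced $\SU(2)$-bundle is well defined globally on $M\setminus S_p$. I would handle this by checking the periods on each $\partial B_\varepsilon(p_i)$ and using $H^2(M;\Z)$ torsion-free considerations, implied by $b^2(M)=0$ together with the exact sequence \eqref{equation:0 to H2(M) to H2(U) to oplus H2(Ui) to 0}.
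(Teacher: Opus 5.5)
Your proposal is correct and takes essentially the same route as the paper. Both twist $L'$ by the flat bundle $(L^F,\nabla^F)$, extend the structure group of $L'\otimes L^F$ along the diagonal $\U(1)\hookrightarrow\SU(2)$ (this is exactly your $L\oplus L^{-1}$), and place $\phi_D$ in the trivial summand of $\mathfrak{g}_P\cong\underline{\R}\oplus (L'\otimes L^F)^{\otimes 2}$.

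One correction to your closing remark: $b^2(M)=0$ only makes $H^2(M;\Z)$ torsion, not torsion-free; a punctured lens space has $H^2(M;\Z)\cong\Z/p$. Nothing in the argument depends on this, since the existence of $L'$ only uses surjectivity of $H^2(M\setminus S_p;\Z)\to\bigoplus_i H^2(\partial B_\varepsilon(p_i);\Z)$, which follows from $H^3(M;\Z)=0$.
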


\begin{proof}
    Given $\alpha$, represented by a $\U(1)$-valued $1$-form, it gives a holonomy representation $\int\alpha:\pi_1(M)\to\U(1)$. On the universal cover $\widetilde M\to M$, $\pi_1(M)$ acts by deck transformations. Define $L^F = \widetilde M \times_{\int\alpha} \C \to M$. The trivial connection on $\widetilde M\times\C$ descends to a flat connection $\nabla^F$.

    By Theorem~\ref{theorem:well definedness of b(p,[mO])}, if $b=0$, there is a principal $\U(1)$-bundle $L'$ with connection $a_D$ such that $* d\phi_D = F_{a_D}$. Since $\nabla^F$ is flat, the induced connection on $L'\otimes L^F$ has curvature $F_{a_D}=*d\phi_D$.

    Viewing $L'\otimes L^F$ as a principal $\U(1)$-bundle, the inclusion $\U(1)\hookrightarrow\SU(2)$ gives an associated principal $\SU(2)$-bundle $P = \mathbf{U}(L'\otimes L^F)\times_{\U(1)}\SU(2)$ with adjoint bundle $\underline{\R}\oplus (L'\otimes L^F)^{\otimes 2}$. The connection $A_D$ and Higgs field $\Phi_D$ are induced from $a_D$, $\nabla^F$, and $\phi_D\oplus 0$. The pair $(A_D,\Phi_D)$ is the desired $\SU(2)$-Dirac monopole.
\end{proof}

\begin{remark}\label{remark:adj bdl of SU(2) bdl=R oplus L, L=(L' otimes LF)2}
    The adjoint bundle of $P\to M\setminus S_p$ is $\underline{\R}\oplus L$, where $L=(L'\otimes L^F)^{\otimes 2}$.
\end{remark}

\begin{definition}\label{definition:framing between R oplus L, su(2) over B(p)-p=R3-0} 
    Let $\eta_i' : B_{2\varepsilon_i}(p_i)\to \R^3$ be geodesic normal coordinates. A \emph{framing} is a bundle isomorphism $\eta_i$ covering $\eta_i'$:
    \begin{equation*}
        \begin{matrix} \text{\begin{tikzpicture}[auto]
            \node (a) at (-1.5,  1) {$\left. \underline{\R} \oplus L \right|_{B_{2 \varepsilon_i} \left( p_i \right) \setminus \left\{ p_i \right\}}$}; 
            \node (b) at ( 1.5,  1) {$\underline{\su(2)}$}; 
            \node (c) at (-1.5, -1) {$B_{2 \varepsilon_i} \left( p_i \right) \setminus \left\{ p_i \right\}$}; 
            \node (d) at ( 1.5, -1) {$\R^3 \setminus \left\{ 0 \right\},$}; 
            \node (x) at ( 0,  0) {$\circlearrowright$}; 
            \draw[->] (a) to node {\scriptsize{$\eta_i$}} (b);
            \draw[->] (a) to node { } (c);
            \draw[->] (b) to node { } (d);
            \draw[{right hook}->] (c) to node {\scriptsize{$\eta_i'$}} (d);
        \end{tikzpicture}} \end{matrix}
    \end{equation*}
    where $L$ is as in Remark~\ref{remark:adj bdl of SU(2) bdl=R oplus L, L=(L' otimes LF)2}. There are $n$ framings $\{\eta_1,\cdots,\eta_n\}$ corresponding to the $n$ singularities $\{p_1,\cdots,p_n\}$.
\end{definition}

\begin{lemma}\label{lemma:extending the principal SU(2)-bundle P to M-...}
    If $b(p_1,\cdots,p_n;[\vec{m}^O])=0$, then by choosing framings appropriately, the principal $\SU(2)$-bundle $P\to M\setminus \bigsqcup\limits_{i=1}^n B_{\varepsilon_i}(p_i)$ from Theorem~\ref{theorem:construction of Dirac monopoles on SU(2) bundle} and the trivial bundle $B_{2\varepsilon_i}(p_i)\times\SU(2)$ extend to a principal $\SU(2)$-bundle $P\to M$. Moreover, its adjoint bundle $\underline{\R}\oplus L$ extends to a vector bundle $\mathfrak g_P\to M$.
\end{lemma}
\begin{proof}
    The bundle $P$ is constructed from $L'\otimes L^F$. Its restriction to $\partial B_{2\varepsilon_i}(p_i)$ has first Chern number $1$, equal to that of $\left.L'\right|_{\partial B_{2\varepsilon_i}(p_i)}$. A BPS monopole $(A_{ BPS},\Phi_{ BPS})$ of charge $1$ on $\R^3\times\SU(2)$ exists. Via a suitable framing, its Higgs field induces a line bundle $L_{\rm BPS}$ over the annulus $\eta_i'(B_{2\varepsilon_i}\setminus \overline{B_{\varepsilon_i}})$ with Chern number $1$. Thus we glue $P$ and the trivial bundle over the annulus to obtain the desired extension $P\to M$.
\end{proof}

\section{Approximate Solutions}\label{section: Approximate Solutions}

In this section, we construct the approximate solution on the ACyl $3$-manifold with one end, and estimate its error term. Our method is to glue some scaled BPS monopoles onto the background scaled $\SU(2)$-Dirac monopole $(A_D^{\overline{m}},\Phi_D^{\overline{m}})$ with mass $m_1^O+\overline{m}-1$ and local mass $\vec{m}^I+\overline{m}-1$. Since $(A_0,\Phi_0)$ is not a monopole, we will perturb it to obtain a genuine one in the next section.

\subsection{BPS Monopoles}\label{subsection: BPS Monopoles}

In this subsection, we introduce the BPS monopole on $\R^3$ and one key lemma in our gluing construction.

Let
\begin{equation*}
        \sigma_1=
        \begin{pmatrix}
            0 & -i \\
            -i & 0 \\
        \end{pmatrix},
        \quad
        \sigma_2=
        \begin{pmatrix}
            0 & -1 \\
            1 & 0 \\
        \end{pmatrix},
        \quad
        \sigma_3=
        \begin{pmatrix}
            -i & 0 \\
            0 & i \\
        \end{pmatrix}.
\end{equation*}
The tuple $\{\sigma_1, \sigma_2, \sigma_3\}$ is a basis of $\su(2)$ and satisfies
\begin{gather*}
    \sigma_i \sigma_j = - \delta_{ij} \mathbf{1} + {\varepsilon_{ij}}^k \sigma_k, \\
    [ \frac{\sigma_i}{2}, \frac{\sigma_j}{2} ] = {\varepsilon_{ij}}^k \frac{\sigma_k}{2}, \quad \{ \frac{\sigma_i}{2}, \frac{\sigma_j}{2} \} = - \frac{1}{2} \delta_{ij} \mathbf{1}, \\
    ( \frac{\sigma_i}{2}, \frac{\sigma_j}{2} ) = -2 \delta_{ij}, \quad - \frac{1}{2} ( \frac{\sigma_i}{2}, \frac{\sigma_j}{2} ) = \delta_{ij}, \\
    [ \frac{\sigma_3}{2}, \frac{\sigma_1}{2} + i \frac{\sigma_2}{2} ] = \frac{\sigma_1}{2} + i \frac{\sigma_2}{2}, \quad [ \frac{\sigma_3}{2}, \frac{\sigma_1}{2} - i \frac{\sigma_2}{2} ] = - \frac{\sigma_1}{2} + i \frac{\sigma_2}{2}, 
\end{gather*}
where $( -, - )$ is the Killing form\footnote{In $\su(2)$ case, $( X, Y ) = \tr ( \mathrm{ad} X \circ \mathrm{ad} Y ) = 4 \tr (XY)$.}. Note that $( -, - )$ induces an $\mathrm{Ad}$-invariant inner product $- \frac{1}{2} ( -, - )$ on $\su(2)$. So, $\mathrm{ad}_X = [ X, - ]$ behaves as a skew-adjoint linear map for all $X \in \su(2)$. Moreover, it can be extended to a Hermitian inner product over $\su(2) \otimes \C = \fsl(2;\C)$. Since
\begin{align*}
    [ X^3 \frac{\sigma_3}{2}, Y^1 \frac{\sigma_1}{2} + Y^2 \frac{\sigma_2}{2} ] = X^3 ( -Y^2 \frac{\sigma_1}{2} + Y^1 \frac{\sigma_2}{2} ), 
\end{align*}
we have 
\begin{align}
    \left| [ X^3 \frac{\sigma_3}{2}, Y^1 \frac{\sigma_1}{2} + Y^2 \frac{\sigma_2}{2} ] \right| = \left| X^3 \frac{\sigma_3}{2} \right| \left| Y^1 \frac{\sigma_1}{2} + Y^2 \frac{\sigma_2}{2} \right|. \label{equation:|[X3s3,Y1s1+Y2s2]|=2|X3s3||Y1s1+Y2s2|}
\end{align}

\begin{definition}[Mass and Charges]
    The \emph{mass} $m$ and the \emph{charge} $k$ of a monopole are defined by
    \begin{equation*}
        m = \lim\limits_{\lvert x \rvert \to \infty} \lvert \Phi(x) \rvert \in \R_{\geq 0},
        \qquad
        k = \lim _{r \to \infty}\deg ( S_r(0)\to \frac{\Phi}{\lvert \Phi \rvert} ) \in \Z .
    \end{equation*}
    \label{definition: Mass and Charges of Monopoles}
\end{definition}

To implement our gluing construction, we need to introduce the BPS monopole. 

\begin{definition}[BPS Monopoles]\label{definition:BPS monopole}
    The \emph{BPS monopole}, denoted by $(A_{BPS},\Phi_{BPS})$, is a unique $\SU(2)$-monopole on $\R^3$, centered at the origin with mass $1$ and charge $1$. This special solution has an explicit expression
    \begin{equation*}
        A_{BPS}(x)= ( \frac{1}{\sinh \lvert x\rvert}-\frac{1}{\lvert x\rvert} ) (n \times \sigma)\cdot dx, \qquad
        \Phi_{BPS}(x)= ( \frac{1}{\tanh \lvert x\rvert}-\frac{1}{\lvert x\rvert} ) (n \cdot \sigma),
    \end{equation*}
    where $n=\frac{x}{\lvert x\rvert}$, $\sigma=\frac{1}{2} ( \sigma_1, \sigma_2, \sigma_3 ) \in \su(2) \otimes \R^3$. 
\end{definition}

The origin, where $\Phi_{BPS}(0)=0$, is not a singularity but the unique zero of $\Phi_{BPS}$. Moreover,
\begin{equation*}
    |\Phi_{BPS}(x \in \R^3)|<1, \quad \lim_{|x|\to\infty}|\Phi_{BPS}(x)|=1.
\end{equation*}

\begin{lemma}
    Let $(A_D,\Phi_D)$ be a Dirac monopole on $\R^3$ with mass $1$, and let $(A_{BPS},\Phi_{BPS})$ be a BPS monopole on $\R^3$ with mass $1$. Then we have   
    \begin{equation*}
        \lvert A_D-A_{BPS} \rvert =O(e^{-\lvert x \rvert}), \quad
        \lvert (\Phi_D-\Phi_{BPS})^\perp \rvert =O(e^{-\lvert x \rvert}), \quad
        \lvert (\Phi_D-\Phi_{BPS})^\parallel \rvert =O(e^{-\lvert x \rvert}).
    \end{equation*}
    For any scaling factor $\lambda>1$, the scaled Dirac monopole $(A_D^\lambda,\Phi_D^\lambda)$ with mass $\lambda$ and the scaled BPS monopole $(A_{BPS}^\lambda(x),\Phi_{BPS}^\lambda(x))=(A_{BPS}(\lambda x),\lambda \Phi_{BPS}(\lambda x))$ with mass $\lambda$ satisfy
    \begin{equation*}
        \lvert A_D^\lambda-A_{BPS}^\lambda \rvert = O(\lambda e^{-\lambda \lvert x \rvert}), \qquad \lvert \Phi_D^\lambda-\Phi_{BPS}^\lambda \rvert = O(\lambda e^{-\lambda \lvert x \rvert}).
    \end{equation*}
    \label{lemma: key estimation}
\end{lemma}

From Proposition~\ref{proposition: Scaling Invariance}, the bijection $\mathcal{M}_{k,m} \to \mathcal{M}_{k,\lambda m}$ implies that there is an identification between charge-$k$ mass-$m$ monopole moduli spaces $\mathcal{M}_{k,m}$ on $\R^3$ and charge-$k$ mass-$\lambda m$ monopole moduli spaces $\mathcal{M}_{k,\lambda m}$ on $\R^3$. Imposing the normalizing condition, we can always assume $m=1$. In the case of BPS monopoles, the fact that scaling changes the mass implies that the only charge $1$ monopole on $\R^3$ is the BPS monopole.

\subsection{Pre-gluing: Construction of Approximate Solutions}\label{subsection: Pre-gluing: Construction of Approximate Solutions}

In this subsection, we construct the approximate solution $(A_0,\Phi_0)$ by gluing scaled BPS monopoles and the scaled Dirac monopole.

Let $\varepsilon_i=\frac{1}{\sqrt{\lambda_i}}$. If $\overline{m}$ is sufficiently large, then the approximate solution $(A_0,\Phi_0)$ on $M$ consists of three parts in three disjoint regions:

\begin{itemize}
    \item On $M \setminus \bigsqcup\limits_{i=1}^{n} B_{2\varepsilon_i}(p_i)$, it equals the scaled $\SU(2)$-Dirac monopole $(A_D^{\overline{m}},\Phi_D^{\overline{m}})$ with a large Higgs field $\lvert \Phi_0 \rvert = \lvert \Phi_D^{\overline{m}} \rvert > \frac{\overline{m}}{2}$ of Lemma~\ref{lemma:large Higgs field}.

    \item On each $B_{\varepsilon_i}(p_i) \subset \bigsqcup\limits_{i=1}^{n} B_{2\varepsilon_i}(p_i)$, using geodesic normal coordinates, it equals the scaled BPS monopole on $\R^3$ with mass $\lambda_i$, where $\lambda_i=m_i^I+\overline{m}-1=O(\overline{m})$ is the scaling factor. 

    \item On each $B_{2\varepsilon_i}(p_i) \setminus B_{\varepsilon_i}(p_i) \subset \bigsqcup\limits_{i=1}^{n} B_{2\varepsilon_i}(p_i)$, it equals a mixed solution also with a large Higgs field. With the help of cut-off functions, we can glue these scaled BPS monopoles to the background scaled Dirac monopole.  
\end{itemize} 

To pull back the scaled BPS monopoles defined on $\R^3$ to $B_{2\varepsilon_i}(p_i) \subset M$, as we mentioned in Definition~\ref{definition:framing between R oplus L, su(2) over B(p)-p=R3-0}, we choose a diffeomorphism between a $2\varepsilon_i$-neighbourhood of $p_i \in M$ and a neighbourhood of $0 \in \R^3$, using geodesic normal coordinates
\begin{equation*}
    \eta'_i : B_{2\varepsilon_i}(p_i) \subset M \to \R^3.
\end{equation*} 
Fixing a framing $\eta_i$ at each $2\varepsilon_i$-neighborhood of $p_i$, we can identify bundles over $\R^3$ and over $B_{2\varepsilon}(p_i) \subset M$. Similarly, the scaled BPS monopole on $\R^3$ can be pull-back to $B_{2\varepsilon_i}(p_i) \subset M$, denoted by
\begin{equation*}
    (\eta^*_i(A_{BPS}^{\lambda_i}),\eta^*_i(\Phi_{BPS}^{\lambda_i})).
\end{equation*}

To glue these scaled BPS monopoles to the background scaled Dirac monopole, we define smooth cut-off functions
\begin{equation*}
    \xi_i= \begin{cases}
        1, & \text{in }  B_{\varepsilon_i}(p_i), \\
        0, & \text{in }  M \setminus B_{2\varepsilon_i}(p_i),
    \end{cases} \qquad
    \xi_0= \begin{cases}
        1, & \text{in } M \setminus \bigsqcup\limits_{i=1}^{n} B_{2\varepsilon_i}(p_i), \\
        0, & \text{in } \bigsqcup\limits _{i=1}^{n} B_{\varepsilon_i}(p_i),
    \end{cases}
\end{equation*}
for all $i=1,\cdots,n$. Therefore, our approximate solution on the whole $M$ is 
\begin{align*}
    (A_0,\Phi_0) &= \xi_0(A_D^{\overline{m}},\Phi_D^{\overline{m}}) + \sum_{i=1}^{n} \xi_i(\eta^*_i(A_{BPS}^{\lambda_i}),\eta^*_i(\Phi_{BPS}^{\lambda_i})) \\
    &=(A_D^{\overline{m}},\Phi_D^{\overline{m}}) + \sum_{i=1}^{n} \xi_i(\eta^*_i(A_{BPS}^{\lambda_i})-A_D^{\overline{m}},\eta^*_i(\Phi_{BPS}^{\lambda_i})-\Phi_D^{\overline{m}}),
\end{align*}
which does not satisfy the Bogomolny equation.

\subsection{Error Term Estimation}\label{subsection: Error Term Estimation}

In this subsection, we estimate the error term $e_0$ over three distinct regions on $M$, showing it can be controlled by the average mass. 

For the three regions described in the subsection~\ref{subsection: Pre-gluing: Construction of Approximate Solutions}, we discuss the corresponding error terms: 
\begin{enumerate}[label=\textbf{(\arabic*)},listparindent=\parindent]
    \item The error term $e_0 = 0$ on $M \setminus \bigsqcup\limits_{i=1}^{n} B_{2\varepsilon_i}(p_i)$, since the scaled Dirac monopole $(A_D^{\overline{m}},\Phi_D^{\overline{m}})$ in this region satisfies the Bogomolny equation.

    \item The error term $e_0 \neq 0$ on $\bigsqcup\limits_{i=1}^{n} B_{2\varepsilon_i}(p_i)$, which comes from two sources:
    \begin{itemize}
        \item The first one, denoted by $e_0^{(1)}$, is that the metric on $\bigsqcup\limits_{i=1}^{n} B_{2\epsilon_i}(p_i)$ may not be flat. Since the scaled BPS monopole satisfies the Bogomolny equation only with respect to the flat metric, our pull-back scaled BPS monopole $(\eta_i^*(A^{\lambda_i}_{BPS}),\eta_i^*(\Phi_{BPS}^{\lambda_i}))$ is a genuine monopole if and only if this metric is flat. 

        \item The second one, denoted by $e_0^{(2)}$, is due to the existence of smooth cut-off functions on $\bigsqcup\limits_{i=1}^{n} B_{2\epsilon_i}(p_i)\setminus B_{\epsilon_i}(p_i)$. If the metric on $\bigsqcup\limits_{i=1}^nB_{2\varepsilon_i}(p_i)$ is flat, the error term is only supported on $\bigsqcup\limits_{i=1}^{n} B_{2\epsilon_i}(p_i)\setminus B_{\epsilon_i}(p_i)$. 
    \end{itemize}
\end{enumerate}

The following lemma asserts that the error term is sufficiently small for large average mass.

\begin{lemma}
    Let $\lambda_i = O(\overline{m})$, $\varepsilon_i = \frac{1}{\sqrt{\lambda_i}} = O(\overline{m}^{-\frac{1}{2}})$. If the average mass $\overline{m}$ is sufficiently large, on each $(B_{2\varepsilon_i}(p_i),g)$ equipped with a framing $\eta_i$, the error term with respect to the $\overline{m}^{-1}$-rescaled metric $g_{\overline{m}^{-1}}=\overline{m}^2 \eta_i^* g$ is of the order of
    \begin{equation*}
        \lvert e_0|_{B_{2\varepsilon_i}(p_i)} \rvert_{g_{\overline{m}^{-1}}}  = O(\overline{m}^{-1}).
    \end{equation*}
    \label{lemma: error trem estimation_1}
\end{lemma}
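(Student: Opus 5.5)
We split the error on $B_{2\varepsilon_i}(p_i)$ as $e_0=e_0^{(1)}+e_0^{(2)}$, following the two sources listed above, and estimate each after rescaling. The working frame is the pulled-back geodesic normal coordinates $\eta_i'$, in which $\eta_i^*g=\delta+O(|x|^2)$ with $\partial(\eta_i^*g)=O(|x|)$. We then pass to the rescaled coordinate $y=\overline{m}x$, i.e. the metric $g_{\overline{m}^{-1}}=\overline{m}^2\eta_i^*g$. By Proposition~\ref{proposition: Scaling Invariance}, a pair $(A,\Phi)$ that is a monopole for $g$ corresponds to $(A,\overline{m}^{-1}\Phi)$ for $g_{\overline{m}^{-1}}$. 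In the $y$-coordinate, the scaled BPS monopole with mass $\lambda_i=O(\overline{m})$ becomes a BPS monopole of mass $\lambda_i/\overline{m}=O(1)$. The ball $B_{2\varepsilon_i}(p_i)$ becomes $\{|y|\le 2\overline{m}\varepsilon_i\}$, of radius $O(\overline{m}^{1/2})$. Both the configuration and the region are therefore uniformly controlled, and pointwise norms of the $1$-form $e_0$ are measured in $g_{\overline{m}^{-1}}$.

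\emph{Step 1: the metric error $e_0^{(1)}$.} On $B_{\varepsilon_i}$ the configuration is exactly the BPS monopole, which satisfies $*_{\delta}F=d\Phi$. Hence
\begin{equation*}
e_0^{(1)}=(*_{g_{\overline{m}^{-1}}}-*_{\delta})F_{A_{BPS}}.
\end{equation*}
In $y$-coordinates, $\overline{m}^2\eta_i^*g=\delta+O(|y|^2/\overline{m}^2)$, so $|*_{g}-*_{\delta}|=O(|y|^2\overline{m}^{-2})$. The mass-$O(1)$ BPS curvature satisfies $|F|\le C(1+|y|)^{-2}$, which follows from the explicit formula in Definition~\ref{definition:BPS monopole}. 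The product is therefore $O(\overline{m}^{-2}|y|^2(1+|y|)^{-2})=O(\overline{m}^{-2})$, which is well within $O(\overline{m}^{-1})$. The same bound holds on the annulus for the Dirac part, since the Dirac monopole is an exact solution for $g$ itself and contributes no metric error there.

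\emph{Step 2: the cut-off error $e_0^{(2)}$.} On the annulus $\varepsilon_i\le|x|\le 2\varepsilon_i$ we write $(A_0,\Phi_0)=(A_D,\Phi_D)+\xi_i(a,\phi)$, where $(a,\phi)$ is the difference between BPS and Dirac. Expanding the Bogomolny operator gives
\begin{equation*}
e_0^{(2)}=*(d\xi_i\wedge a)-d\xi_i\,\phi+\xi_i\cdot(\text{linearised terms})+(\xi_i^2-\xi_i)\cdot(\text{quadratic terms}).
\end{equation*}
Here the linearised terms combine with the exactness of both monopoles, so only the terms involving $d\xi_i$ and $\xi_i(1-\xi_i)[a\wedge a],[a,\phi]$ survive. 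By Lemma~\ref{lemma: key estimation}, in the rescaled picture $|a|,|\phi|=O(e^{-c|y|})$ with $|y|\sim\overline{m}^{1/2}$, and $|d\xi_i|=O(\overline{m}^{-1/2})$ in $g_{\overline{m}^{-1}}$. Hence $|e_0^{(2)}|=O(e^{-c\overline{m}^{1/2}})$, which is far smaller than $\overline{m}^{-1}$.

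\emph{Main obstacle.} The delicate point is matching the two framings. Lemma~\ref{lemma: key estimation} compares BPS with the flat Dirac monopole on $\R^3$, whereas $\Phi_D^{\overline{m}}$ on $M$ equals $m_i^I+\overline{m}-1-\frac{1}{4\pi r_i}+O(r_i)$. The $O(r_i)$ correction and the curvature of $g$ add a non-exponential discrepancy of size $O(|x|)=O(\overline{m}^{-1/2})$ to $\phi$. Its contribution to $e_0^{(2)}$ through $d\xi_i\,\phi$ is $O(\overline{m}^{-1/2})$ in the unscaled metric. After the rescaling, in which $\Phi$ is multiplied by $\overline{m}^{-1}$ and $1$-forms by $\overline{m}^{-1}$, this becomes $O(\overline{m}^{-1})$ or better. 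The choice $\lambda_i=m_i^I+\overline{m}-1$ cancels the constant terms exactly, so only this linear remainder survives. Choosing the framing $\eta_i$ via Lemma~\ref{lemma:extending the principal SU(2)-bundle P to M-...} so that the connection parts agree up to the analogous $O(|x|)$ gauge correction gives the same bound for $a$. Combining Steps 1 and 2 yields $|e_0|_{g_{\overline{m}^{-1}}}=O(\overline{m}^{-1})$.
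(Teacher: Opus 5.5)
Your proposal is correct and uses the same splitting $e_0=e_0^{(1)}+e_0^{(2)}$ as the paper, but you carry it out differently.

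\textbf{How the paper argues.} It stays in the unscaled metric. It bounds $\lvert e_0^{(1)}\rvert_g$ crudely by $\sup\lvert x\rvert^2\cdot\sup\lvert F\rvert=O(\varepsilon_i^2\lambda_i^2)=O(\overline{m})$. It makes $e_0^{(2)}$ negligible by quoting an exponential bound for the Dirac--BPS difference on the curved annulus. Only at the end does it multiply by $\overline{m}^{-2}$.

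\textbf{Sharper bound for $e_0^{(1)}$.} You rescale first and use the decay $\lvert F\rvert\le C(1+\lvert y\rvert)^{-2}$. This means $\lvert x\rvert^2\lvert F_{A^{\lambda_i}_{BPS}}\rvert\le C$ uniformly on $B_{2\varepsilon_i}(p_i)$, because the curvature is large only where $\lvert x\rvert$ is small. The result is $O(\overline{m}^{-2})$ for the rescaled $e_0^{(1)}$, equivalently $\lvert e_0^{(1)}\rvert_g=O(1)$. That is better than the lemma needs, and it would also improve the later bound on $\lVert e_0\rVert_{W^{0,2}_\beta}$.

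\textbf{The non-exponential mismatch.} You also notice that $\Phi_D^{\overline{m}}$ differs from the flat Dirac model by an $O(r_i)$ term. On the annulus this is $O(\overline{m}^{-1/2})$, so it is not exponentially small. The exponential bound the paper quotes for $\Phi_D^{\overline{m}}-\eta_i^*\Phi_{BPS}^{\lambda_i}$ does not capture this term. Your observation that the mismatch is still harmless after rescaling is a genuine gain in accuracy. The corresponding $O(\lvert x\rvert)$ gauge statement for the connection is, as in the paper, only asserted.

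\textbf{One arithmetic slip.} Since $\lvert d\xi_i\rvert_g\sim\varepsilon_i^{-1}\sim\overline{m}^{1/2}$, the mismatch contributes $O(1)$ through $d\xi_i\,\phi$ and $*(d\xi_i\wedge a)$ in the unscaled metric, not $O(\overline{m}^{-1/2})$. After the factor $\overline{m}^{-2}$ this is $O(\overline{m}^{-2})$, so your conclusion stands.
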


\begin{proof}
    On each $B_{2\varepsilon_i}(p_i)$, the approximate solution can be written as
    \begin{equation*}
        A_0 = \eta^*_i (A_{BPS}^{\lambda_i}) + \xi_0 (A_D^{\overline{m}} - \eta_i^*(A_{BPS}^{\lambda_i})), \qquad
        \Phi_0 = \eta^*_i (\Phi_{BPS}^{\lambda_i}) + \xi_0(\Phi_D^{\overline{m}} - \eta_i^*(\Phi_{BPS}^{\lambda_i})).
    \end{equation*}
    Computing the curvature 
    \begin{align*}
        F_{A_0} &= F_{\eta_i^* (A_{BPS}^{\lambda_i}) + \xi_0 (A_D^{\overline{m}} - \eta_i^*(A_{BPS}^{\lambda_i}))} \\ 
        &= F_{\eta_i^*(A_{BPS}^{\lambda_i})} + d_{\eta_i^*(A_{BPS}^{\lambda_i})}[\xi_0 (A_D^{\overline{m}} - \eta_i^*(A_{BPS}^{\lambda_i}))] \\
        &\quad + \xi_0^2 [(A_D^{\overline{m}} - \eta_i^*(A_{BPS}^{\lambda_i})) \wedge (A_D^{\overline{m}} - \eta_i^*(A_{BPS}^{\lambda_i}))] \\
        &= F_{\eta_i^*(A_{BPS}^{\lambda_i})} + (d\xi_0) \wedge (A_D^{\overline{m}} - \eta_i^*(A_{BPS}^{\lambda_i})) + \xi_0 d_{\eta_i^*(A_{BPS}^{\lambda_i})}(A_D^{\overline{m}} - \eta_i^*(A_{BPS}^{\lambda_i})) \\
        &\quad + \xi_0^2 [(A_D^{\overline{m}} - \eta_i^*(A_{BPS}^{\lambda_i})) \wedge (A_D^{\overline{m}} - \eta_i^*(A_{BPS}^{\lambda_i}))],
    \end{align*}
    and the covariant derivative of the Higgs field
    \begin{align*}
        d_{A_0}\Phi_0 &= d_{\eta_i^*(A_{BPS}^{\lambda_i}) + \xi_0 (A_D^{\overline{m}} - \eta_i^*(A_{BPS}^{\lambda_i}))} \Phi_0 \\
        &= d_{\eta_i^*(A_{BPS}^{\lambda_i})} [\eta_i^*(\Phi_{BPS}^{\lambda_i}) + \xi_0(\Phi_D^{\overline{m}} - \eta_i^*(\Phi_{BPS}^{\lambda_i}))] + [\xi_0(A_D^{\overline{m}} -\eta_i^*(A_{BPS}^{\lambda_i})) , \Phi_0] \\
        &= d_{\eta_i^*(A_{BPS}^{\lambda_i})} \eta_i^*(\Phi_{BPS}^{\lambda_i}) + (d\xi_0) \wedge (\Phi_D^{\overline{m}} -\eta_i^*(\Phi_{BPS}^{\lambda_i})) \\
        &\quad + \xi_0 d_{\eta_i^*(A_{BPS}^{\lambda_i})}(\Phi_D^{\overline{m}} -\eta_i^*(\Phi_{BPS}^{\lambda_i})) + \xi_0 [A_D^{\overline{m}} -\eta_i^*(A_{BPS}^{\lambda_i}) , \eta^*_i (\Phi_{BPS}^{\lambda_i}) + \xi_0(\Phi_D^{\overline{m}} - \eta_i^*(\Phi_{BPS}^{\lambda_i}))],
    \end{align*}
    the error term is
    \begin{align*}
        *_g F_{A_0} - d_{A_0} \Phi_0 &= (*_g F_{\eta_i^*(A_{BPS}^{\lambda_i})}-d_{\eta_i^*(A_{BPS}^{\lambda_i})}\eta^*_i(\Phi_{BPS}^{\lambda_i})) \\
        &\quad + \xi_0 [*_gd_{\eta^*_i (A_{BPS}^{\lambda_i})} (A_D^{\overline{m}} -\eta^*_i(A_{BPS}^{\lambda_i})) - d_{\eta^*_i(A_{BPS}^{\lambda_i}) } (\Phi_D^{\overline{m}} -\eta^*_i(\Phi_{BPS}^{\lambda_i}))] \\
        &\quad +*_g [(d\xi_0) \wedge (A_D^{\overline{m}} -\eta^*_i(A_{BPS}^{\lambda_i}))] - (d\xi_0) \wedge (\Phi_D^{\overline{m}} - \eta^*_i(\Phi_{BPS}^{\lambda_i})) \\
        &\quad +[\xi_0 (A_D^{\overline{m}} - \eta_i^*(A_{BPS}^{\lambda_i}))]^2 - \xi_0^2 [A_D^{\overline{m}} -\eta^*_i(A_{BPS}^{\lambda_i}) , \Phi_D^{\overline{m}} - \eta_i^*(\Phi_{BPS}^{\lambda_i})] \\
        &\quad -\xi_0[A_D^{\overline{m}} -\eta^*_i(A_{BPS}^{\lambda_i}), \eta_i^*(\Phi_{BPS}^{\lambda_i})],
    \end{align*}
    where $e_0^{(1)}= *_g F_{\eta_i^*(A_{BPS}^{\lambda_i})} - d_{\eta_i^*(A_{BPS}^{\lambda_i})}\eta_i^*(\Phi_{BPS}^{\lambda_i})$, and $e_0^{(2)}$ contains the terms $(A_D^{\overline{m}}-\eta^*_i(A_{BPS}^{\lambda_i}))$ or $(\Phi_D^{\overline{m}}-\eta^*_i(\Phi_{BPS}^{\lambda_i}))$ multiplied by $\xi_0$ or $d\xi_0$.

    \begin{enumerate}[label=\textbf{(\arabic*)},listparindent=\parindent]
    \item \textbf{Estimation of $e_0^{(1)}$.}
    
    A basic fact is that any Euclidean metric $(\R^3,g_E)$ pulled back to $B_{2\varepsilon_i}(p_i) \subset M$ is still a Euclidean metric $g_0 = \eta_i^* g_E$. Hence, the size of $e_0^{(1)}$ relies on the difference between the intrinsic metric $(B_{2\varepsilon_i}(p_i),g)$ and the pull-back Euclidean metric $(B_{2\varepsilon_i}(p_i),g_0)$. In other words, $(\eta_i^*(A_{BPS}^{\lambda_i}),\eta_i^*(\Phi_{BPS}^{\lambda_i}))$ may not be a genuine monopole with respect to $g$,
    \begin{align*}
        e_0^{(1)}
        &= [*_{g_0} F_{\eta_i^*(A_{BPS}^{\lambda_i})} - d_{\eta_i^*(A_{BPS}^{\lambda_i})} \eta_i^*(\Phi_{BPS}^{\lambda_i})] + [*_gF_{\eta_i^*(A_{BPS}^{\lambda_i})} - *_{g_0} F_{\eta_i^*(A_{BPS}^{\lambda_i})}] \\
        &= (*_g - *_{g_0}) F_{\eta_i^*(A_{BPS}^{\lambda_i})},
    \end{align*}
    where $(\eta_i^* (A_{BPS}^{\lambda_i}),\eta_i^* (\Phi_{BPS}^{\lambda_i}))$ is a genuine monopole with respect to $g_0$.

    Together with $\lambda_i = O(\overline{m})$ and $\varepsilon_i=O(\overline{m}^{-\frac{1}{2}})$, we have
    \begin{align*}
        \lvert F_{\eta^*_i (A_{BPS}^{\lambda_i})} \rvert_{g_0} &= O(\overline{m}^2), \\
        \lvert (*_g - *_{g_0}) F_{\eta^*_i(A_{BPS}^{\lambda_i})} \rvert_g &= O(\varepsilon_i^2 \lvert F_{\eta^*_i (A_{BPS}^{\lambda_i})} \rvert_{g_0}) = O(\overline{m}). \qquad (\text{\cite[Section 3.3]{Esfahani_2022}})
    \end{align*}

    Hence,
    \begin{equation}
        \lvert e_0^{(1)}|_{B_{2\varepsilon_i}(p_i)} \rvert_g = O(\overline{m}). 
        \label{equation: Estimation of the first error term e0(1)}
    \end{equation}

    \item  \textbf{Estimation of $e_0^{(2)}$.}
    
    An important fact is that on each $B_{2\varepsilon_i}(p_i)$, the local mass $m_i^I+\overline{m}-1$ of the scaled Dirac monopole $(A_D^{\overline{m}},\Phi_D^{\overline{m}})$ and the mass $\lambda_i=m_i^I+\overline{m}-1$ of the scaled BPS monopole $(\eta^*_i(A_{BPS}^{\lambda_i}),\eta^*_i(\Phi_{BPS}^{\lambda_i}))$ are the same, so Lemma~\ref{lemma: key estimation} can be used in our estimation.
    
    Since $\xi_0$ and $d\xi_0$ are supported outside $B_{\varepsilon_i}(p_i)$, the generalization of Lemma~\ref{lemma: key estimation} in an arbitrary Riemannian $3$-manifold (\cite[Section 3.3]{Esfahani_2022}.) implies
    \begin{equation*}
        \lvert A_D^{\overline{m}}-\eta^*_i (A_{BPS}^{\lambda_i}) \rvert = O(\lambda_i e^{-\lambda_i \lvert x-p_i \rvert}), \qquad \lvert \Phi_D^{\overline{m}} - \eta^*_i(\Phi_{BPS}^{\lambda_i}) \rvert = O(\lambda_i e^{-\lambda_i \lvert x-p_i \rvert}),
    \end{equation*}
    on each annulus $B_{2\varepsilon_i}(p_i) \setminus B_{\varepsilon_i}(p_i)$. Together with $\lambda_i= O(\overline{m})$ and $\varepsilon_i = O(\overline{m}^{-\frac{1}{2}})$, we have
    \begin{align*}
        \lvert \xi_0 [*_gd_{\eta^*_i (A_{BPS}^{\lambda_i})} (A_D^{\overline{m}}-\eta^*_i(A_{BPS}^{\lambda_i})) \rvert &= O(\overline{m}^2 e^{-\sqrt{\overline{m}}}),
        \\
        \lvert d_{\eta^*_i(A_{BPS}^{\lambda_i})} (\Phi_D^{\overline{m}} -\eta^*_i(\Phi_{BPS}^{\lambda_i})) \rvert &= O(\overline{m}^2 e^{-\sqrt{\overline{m}}}),   \\
        \lvert *_g [(d\xi_0) \wedge (A_D^{\overline{m}} -\eta^*_i(A_{BPS}^{\lambda_i}))] \rvert &= O(\overline{m}^{\frac{3}{2}} e^{-\sqrt{\overline{m}}}), \\
        \lvert (d\xi_0) \wedge (\Phi_D^{\overline{m}} - \eta^*_i(\Phi_{BPS}^{\lambda_i})) \rvert &= O(\overline{m}^{\frac{3}{2}} e^{-\sqrt{\overline{m}}}), \\
        \lvert [\xi_0 (A_D^{\overline{m}} - \eta_i^*(A_{BPS}^{\lambda_i}))]^2 \rvert &= O(\overline{m}^2 e^{-2\sqrt{\overline{m}}}), \\
        \rvert \xi_0^2 [A_D^{\overline{m}} -\eta^*_i(A_{BPS}^{\lambda_i}) , \Phi_D^{\overline{m}} - \eta_i^*(\Phi_{BPS}^{\lambda_i})] \rvert &= O(\overline{m}^2 e^{-2\sqrt{\overline{m}}}), \\
        \lvert \xi_0[A_D^{\overline{m}} -\eta^*_i(A_{BPS}^{\lambda_i}), \eta_i^*(\Phi_{BPS}^{\lambda_i})] \rvert &=O(\overline{m}^2 e^{-\sqrt{\overline{m}}}).
    \end{align*}
    Hence, \begin{equation}
        \lvert e_0^{(2)}|_{B_{2\varepsilon_i}(p_i) \setminus B_{\epsilon_i}(p_i)} \rvert_g = O(\overline{m}^2 e^{-\sqrt{\overline{m}}}).
        \label{equation: Estimation of the second error term e0(2)}
    \end{equation}

    \item \textbf{Rescale all data inside $B_{2\varepsilon_i}(p_i)$.}

    Combining \eqref{equation: Estimation of the first error term e0(1)} and \eqref{equation: Estimation of the second error term e0(2)}, we have
    \begin{equation}
        \lvert e_0 |_{B_{2\varepsilon_i}(p_i)} \rvert_{g} = \lvert e_0^{(1)} |_{B_{2\varepsilon_i}(p_i)}+ e_0^{(2)} |_{B_{2\varepsilon_i}(p_i) \setminus B_{\epsilon_i}(p_i)} \rvert_g = O(\overline{m}) + O(\overline{m}^2 e^{-\sqrt{\overline{m}}}) = O(\overline{m}). \label{equation:|e0|g=o(overline m)}
    \end{equation}
    Although $\lvert e_0|_{B_{2\varepsilon_i}(p_i)} \rvert_g$ cannot be controlled for large average mass,
    we use the scaling map $\exp_{\overline{m}^{-2}}: B_{2\varepsilon_i}(p_i) \to B_{2\varepsilon_i}(p_i)$ and the rescaling norm with respect to $g_{\overline{m}^{-1}}=\overline{m}^2 \exp_{\overline{m}^{-2}} g$, 
    \begin{align*}
        \lvert e_0|_{B_{2\varepsilon_i}(p_i)} \rvert_{g_{\overline{m}^{-1}}} &= \overline{m}^{-2} \lvert e_0|_{B_{2\varepsilon_i}(p_i)} \rvert_{g} \quad (\text{scaling identity }\eqref{equation: scaling identity}) \\
        &= O(\overline{m}^{-1}).
    \end{align*}
    \end{enumerate}
    \end{proof}

\section{Genuine Solutions}\label{section: Genuine Solutions}

In this section, we use the Banach implicit function theorem to solve the Bogomolny equation~\eqref{equation: not elliptic}. Namely, we find a genuine solution near the approximate solution $(A_0,\Phi_0)$ via perturbation.

We will also adopt the unified notation $C$ (resp. $c$) to denote constants that are independent (resp. dependent) of $\overline{m}$. Note that the value of $C$ (resp. $c$) may change from one line to another.

\subsection{Weighted Sobolev Spaces}\label{subsection: Weighted Sobolev Spaces}

In order to prove the surjectivity of the linearized operator $d_2$ and to bound the error term $e_0$, we introduce appropriate weighted Sobolev spaces in this subsection. 

Let $(M,g)$ be an asymptotically cylindrical $3$-manifold with $b^1(M)=b^2(M)=0$. Let $P \to M$ be a principal $\SU(2)$-bundle, and let $\mathfrak{g}_P = P \times_{\Ad} \su(2)$ be the associated adjoint bundle. The zero set
\begin{equation*}
    Z(\Phi_0) = \{p\in M \mid \Phi_0(p)=0\} =\{p_1,\cdots,p_n\}
\end{equation*}
shows that the approximate solution $(A_0,\Phi_0)$ with $\Phi_0 \neq 0$ is reducible on $M \setminus \bigsqcup\limits_{i=1}^n B_{\varepsilon_i}(p_i)$. The decomposition of the adjoint bundle,
\begin{equation*}
    \mathfrak{g}_P|_{M \setminus \bigsqcup\limits_{i=1}^n B_{\varepsilon_i}(p_i)} = \underline{\R} \oplus L \to M \setminus \bigsqcup\limits_{i=1}^n B_{\varepsilon_i}(p_i),
\end{equation*}
which is preserved by $d_2$, $d_2^*$, and $d_2 d_2^*$, induces the decomposition of the section,
\begin{equation*}
    f = f^{\parallel} \oplus f^{\perp} \in \Gamma (\underline{\R} \oplus L ).
\end{equation*}
Here, $\underline{\R}$ is a sub-bundle generated by the image of the non-zero Higgs field $\Phi_0$, and $L$ is an orthogonal sub-bundle. 

Let $\rho$ be a distance function. The weight functions $W_j: M \to \R_{\geq 0} \; (j =0,1)$ are defined by
    \begin{equation*}
        W_j=
        \begin{cases}
            \overline{m}^{j-2}, & \text{in }\bigsqcup\limits_{i=1}^{n} B_{2 \varepsilon_i}(p_i), \\
            e^{-\frac{1}{2}\beta\rho}, & \text{in } M \setminus \overline{\bigsqcup\limits_{i=1}^{n} B_{2 \varepsilon_i}(p_i)}.
        \end{cases}
    \end{equation*}

\begin{definition}\label{definition: weight function Wn, norm || ||Walphakp}
    For all $k \in \Z_{\geq 0}$, smooth compactly supported sections $f$ on $M$, we define the norms
    \begin{align*}
        \left\| f \right\|_{W_{\beta}^{k,2}}^2 &= \sum_{j=0}^k \sum_{i=1}^{n} \int_{B_{2\varepsilon_i} ( p_i )} \left| W_j \nabla_{A_0}^j f \right|_g^2 \vol_g \\
        &\quad + \sum_{j=0}^k \int_{M \setminus \overline{\bigsqcup\limits_{i=1}^n B_{2\varepsilon_i} ( p_i )} } ( \left| W_j \nabla_{A_0}^j f^{\parallel} \right|_g^2 + \left| W_j \mathrm{ad}_{\Phi_0}^{k-j} \nabla_{A_0}^j f^{\perp} \right|_g^2 ) \vol_g, 
    \end{align*}
    The weighted Sobolev space $W_{{\beta}}^{k,2}$ is obtained by completing the normed space $( \Gamma_c ( \mathfrak{g}_P ) , \left\| - \right\|_{W_{{\beta}}^{k,2}} )$. 
\end{definition}

\begin{remark}\label{remark:calculation of ad Phi0}
    Of course, 
    \begin{align*}
        \left\| f \right\|_{W_{\beta}^{0,2}}^2 =\sum\limits_{i=1}^n\int_{B_{2\varepsilon_i}(p_i)} \left| W_0 f \right|_g^2 \vol_g 
     + \int_{M \setminus \overline{\bigsqcup\limits_{i=1}^n B_{2\varepsilon_i} ( p_i )} } ( \left| W_0  f^{\parallel} \right|_g^2 + \left| W_0 f^{\perp} \right|_g^2 ) \vol_g.
    \end{align*}
Here, $\mathrm{ad}_{\Phi_0} = [ \Phi_0, - ]$ is a skew-adjoint linear map at each $p \in M$. Moreover, for all $f \in \Gamma_c (\underline{\R} \oplus L)$ on $M \setminus \bigsqcup\limits_{i=1}^nB_{\varepsilon_i}(p_i)$, $\left| \mathrm{ad}_{\Phi_0} f^{\perp} \right|_g = \left| \Phi_0 \right| \left| f^{\perp} \right|_g$ follows from the equation~\eqref{equation:|[X3s3,Y1s1+Y2s2]|=2|X3s3||Y1s1+Y2s2|}.
\end{remark}

\begin{remark}
    We explain why this norm is chosen from two aspects.
    \begin{itemize}
        \item Under the $\tau$-rescaled metric $g_\tau= \tau^{-2}\eta^*_i g$, we have the scaling identity
        \begin{equation}
            \lvert f_\tau \rvert_{g_\tau} = \tau^2 \lvert f \rvert_{g},
            \label{equation: scaling identity}
        \end{equation}
        where $f_\tau = \tau^2 \eta_i^* f$ if $f$ is a function, and $f_\tau=\tau \cdot \eta_i^* f$ if $f$ is a $1$-form. Hence, $W_j={\overline{m}}^{j-2}$ is compatible with the rescaled data on $\bigsqcup\limits_{i=1}^n B_{2\varepsilon_i}(p_i)$.
        \item In order to find a bounded right inverse of $d_2$, we use Lockhart-McOwen weighted Sobolev spaces on both components $f^\perp$ and $f^\parallel$. That is, outside $\bigsqcup\limits_{i=1}^n B_{2\varepsilon_i}(p_i)$, the norms of $W^{0,2}_\beta,W^{0,1}_\beta$ are equal to the norms of $L^2_{0,\beta}$, $L^2_{1,\beta}$, respectively. 
    \end{itemize}
\end{remark}

\subsection{The Linear Equation}\label{subsection: The Linear Equation}

In this subsection, we solve the linear equation $d_2(a,\varphi)=f$ for all $f \in \Omega^1(M;\mathfrak{g}_P)$, proving that $d_2:W^{1,2}_\beta \to W^{0,2}_\beta$ is surjective with a bounded right inverse for all $\beta 
\in (\beta^*,0)$.

The smooth cut-off function $\chi$ is defined by 
\begin{equation*}
    \chi =\begin{cases}
        0, & \text{in }  \bigsqcup\limits_{i=1}^n B_{2\varepsilon_i}(p_i), \\
        1, &\text{in } M \setminus \overline{\bigsqcup\limits_{i=1}^n B_{3\varepsilon_i}(p_i)}.
    \end{cases}
\end{equation*}
Then, for any $f \in \Omega^1(M;\mathfrak{g}_P)$, there is a natural decomposition $f=\chi f+ (1-\chi)f$. Suppose that $\lvert d\chi \rvert_g =O(\varepsilon_i^{-1})=O(\overline{m}^{\frac{1}{2}})$ and that
\begin{equation}
    \Vert d\chi \Vert_{W^{1,2}_\beta}^2 = \int_{\bigsqcup\limits_{i=1}^n B_{3\varepsilon_i(p_i)} \setminus B_{2\varepsilon_i}(p_i)} \lvert e^{-\frac{1}{2}\beta\rho} d \chi\rvert_g^2 \vol_g = e^{-\beta\rho} O(\overline{m}) O(\varepsilon_i^3) = O(\overline{m}^{-\frac{1}{2}}).
    \label{equation: estimate of cut-off function}
\end{equation}

The following theorem is the main theorem in this subsection.

\begin{theorem}
    Let $(M,g)$ be an asymptotically cylindrical $3$-manifold with $b^1(M)=b^2(M)=0$. If the average mass $\overline{m}$ is sufficiently large, then the linearized operator
    \begin{equation*}
        d_2: W^{1,2}_{\beta} \to W^{0,2}_{\beta}
    \end{equation*}
    is surjective with a bounded right inverse $d_2^{-1}: W^{0,2}_{\beta} \to W^{1,2}_{\beta}$ for all $\beta \in (\beta^*,0)$.
    \label{theorem: surjective}
\end{theorem}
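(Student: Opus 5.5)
The plan is the standard one: look for the solution in the form $(a,\varphi)=d_2^*u$. It then suffices to show that $d_2d_2^*:W^{2,2}_\beta\to W^{0,2}_\beta$, or more precisely its weak formulation, is invertible with a bound independent of $\overline{m}$. We then set $d_2^{-1}:=d_2^*(d_2d_2^*)^{-1}$. Boundedness of $d_2^*:W^{2,2}_\beta\to W^{1,2}_\beta$ is immediate from the definition of the norms, since $d_2^*$ involves only $\nabla_{A_0}$ and $\mathrm{ad}_{\Phi_0}$, and these are exactly the quantities weighted in Definition~\ref{definition: weight function Wn, norm || ||Walphakp}. The central estimate to prove is therefore the coercive bound
\begin{equation*}
    \Vert u\Vert_{W^{1,2}_\beta}\le C\Vert d_2^*u\Vert_{W^{0,2}_\beta},
\end{equation*}
with $C$ independent of $\overline{m}$. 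Once we have it, Lax--Milgram/Riesz on the completion, together with duality between the weights $\beta$ and $-\beta$, gives existence of $u$ with $d_2d_2^*u=f$. The estimate for $d_2^{-1}$ then follows.

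To prove the coercive bound, I would split $u=\chi u+(1-\chi)u$ using the cut-off $\chi$ above, and treat the pieces separately.

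\textbf{Near the zeros.} On $\bigsqcup B_{3\varepsilon_i}(p_i)$, rescale by $\overline{m}$ via the scaling identity~\eqref{equation: scaling identity}, so that the weights $W_j=\overline{m}^{j-2}$ become the unweighted norms. In the rescaled picture the configuration is $O(\overline{m}^{-1})$-close to the mass-one BPS monopole on a large ball, by Lemma~\ref{lemma: error trem estimation_1} and Lemma~\ref{lemma: key estimation}. For the BPS monopole, $d_2d_2^*$ has trivial $L^2$ kernel on $\R^3$: by the Weitzenb\"ock formula~\eqref{equation:d2d2*=Weitzenbock} with $e_0=0$ and flat metric, $\nabla^*\nabla-\mathrm{ad}_\Phi^2$ is positive. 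This gives a uniform bound $\Vert\nabla v\Vert+\Vert[\Phi,v]\Vert\le C\Vert d_2^*v\Vert$ on the ball. The Ricci term and the $e_0$ term in~\eqref{equation:d2d2*=Weitzenbock} are $O(\overline{m}^{-1})$ after rescaling, and are absorbed.

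\textbf{Away from the zeros.} Here $\mathfrak g_P=\underline{\R}\oplus L$ and $d_2^*$ preserves the splitting.

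For $u^\perp$, integrate~\eqref{equation:d2d2*=Weitzenbock} against $e^{-\beta\rho}u^\perp$. This gives
\begin{equation*}
    \Vert d_2^*u^\perp\Vert^2\ge\Vert\nabla u^\perp\Vert^2+\Vert\mathrm{ad}_{\Phi_0}u^\perp\Vert^2-C\Vert u^\perp\Vert^2-(\text{weight-derivative terms}),
\end{equation*}
where the weight-derivative terms are of size $O(|\beta|)\Vert u^\perp\Vert\Vert\nabla u^\perp\Vert$. By Lemma~\ref{lemma:large Higgs field}, $|\Phi_0|\ge\overline{m}/2$, so the $\mathrm{ad}_{\Phi_0}$ term dominates all of these once $\overline{m}$ is large.

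For $u^\parallel$ the Higgs term vanishes, and $d_2^*u^\parallel=(*du^\parallel,-d^*u^\parallel)$ is essentially the operator $d+d^*$ (via $*$) acting on $\underline{\R}$-valued $1$-forms. Here I invoke Lemma~\ref{lemma:|u|<=C|(d+d*)u|}, which needs $b^1(M)=0$ and the weight range $-\beta\in(0,-\beta^*)$; the restriction $\beta\in(\beta^*,0)$ is exactly what makes that lemma applicable. The lemma is stated on all of $M$, so I would extend $u^\parallel$ into the balls by the cut-off. The resulting commutator $d\chi\cdot u$ is controlled by~\eqref{equation: estimate of cut-off function}, which is $O(\overline{m}^{-1/2})$, and so it is absorbed for large $\overline{m}$.

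\textbf{Patching.} Combining the two pieces, the commutator terms $[d_2^*,\chi]u=d\chi\cdot u$ are again small by~\eqref{equation: estimate of cut-off function}. The corrections coming from the connection cross-terms between $\parallel$ and $\perp$ on the annuli are exponentially small by Lemma~\ref{lemma: key estimation}.

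I expect the main obstacle to be the $\parallel$ estimate and its compatibility with the mixed weights. The weight function jumps between $\overline{m}^{-2}$ on the balls and $e^{-\beta\rho/2}$ outside, and the Lockhart--McOwen estimate must be applied uniformly in $\overline{m}$ even though the balls shrink. To keep the constant independent of $\overline{m}$, I would first try to show that the part of $u^\parallel$ supported in the balls costs only $O(\overline{m}^{-1/2})$ relative to the global norm, using the local BPS estimate there. It is possible that the exponent in~\eqref{equation: estimate of cut-off function} is not quite sufficient for this, in which case a logarithmic cut-off would be needed instead.
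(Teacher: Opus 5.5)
There is a genuine gap: your coercive estimate is at the wrong weight.

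\textbf{Why the weight matters.} Under the unweighted $L^2$ pairing, $(W^{0,2}_\beta)^*\cong W^{0,2}_{-\beta}$. So, given closed range, surjectivity of $d_2$ on $W^{1,2}_\beta$ is equivalent to injectivity of $d_2^*$ at the \emph{dual} weight $\gamma=-\beta>0$. This is Lemma~\ref{lemma: duality}, and the paper accordingly proves $\lVert d_2^*f\rVert_{W^{0,2}_\gamma}\geq C\lVert f\rVert_{W^{1,2}_\gamma}$ in Lemmas~\ref{lemma: surjective1} and~\ref{lemma: surjective2}. Your bound $\lVert u\rVert_{W^{1,2}_\beta}\leq C\lVert d_2^*u\rVert_{W^{0,2}_\beta}$ only concerns decaying $u$. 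It is the estimate dual to surjectivity at weight $-\beta$, not at $\beta$. The two are not interchangeable, because $0$ is an indicial root on the $\underline{\R}$-summand ($d\rho$ is closed and coclosed on the cylinder).

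\textbf{A concrete failure.} The step ``$d_2d_2^*:W^{2,2}_\beta\to W^{0,2}_\beta$ is invertible'' already fails in the abelian sector, where $d_2d_2^*=dd^*+d^*d$ on $1$-forms. Pick $h$ with $d^*dh=1$, so $h\sim-\rho^2/2$; then $dh$ is a linearly growing solution of $(dd^*+d^*d)(dh)=0$. Take $\psi\in C^\infty_c(M)$ with $\int_M\psi\vol_g\neq0$. The form $d\psi=d_2(0,-\psi)$ lies in the range of $d_2$, and $\langle d\psi,dh\rangle_{L^2}=\int_M\psi\vol_g\neq0$. But $\langle (dd^*+d^*d)u,dh\rangle_{L^2}=0$ for every $u\in L^2_{2,\beta}$, so $d\psi$ is not in the range of $d_2d_2^*$ on decaying $u$. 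The natural solution is $u=dw$ with $d^*dw=\psi$. By the flux condition $w\sim c\rho$ with $c\neq0$, so $u\sim c\,d\rho$ is bounded but not decaying, while $d_2^*u=(0,-\psi)$ is compactly supported. Hence $u$ has to live in a weight-$(-\beta)$ space, and the coercive estimate needed there is the paper's.

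Your $\parallel$ step already shows the mismatch. Lemma~\ref{lemma:|u|<=C|(d+d*)u|} is a positive-weight statement, so it gives a bound in the $W_{-\beta}$-norms and cannot be inserted into a $W_\beta$-estimate. It is also at positive weight that the hypotheses genuinely enter. There, the kernel of $u\mapsto(*du,d^*u)$ consists of bounded harmonic $1$-forms asymptotic to $c\,d\rho+\alpha_\Sigma$. These vanish because $b^1(M)=0$ and $M$ has a single end (from $b^2(M)=0$). At weight $\beta$, by contrast, only $L^2$ harmonic forms occur.

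\textbf{Repair.} Run your whole estimate at $\gamma=-\beta$. The Weitzenb\"ock argument for $u^\perp$ and your rescaled BPS estimate near the cores (a reasonable alternative to the paper's minimizer argument) are insensitive to the sign of the weight. Then conclude in one of two ways:
\begin{itemize}
\item as the paper does, via Lemma~\ref{lemma: duality} and closed range; or
\item via Lax--Milgram on the completion of $\Gamma_c(\mathfrak{g}_P)$ under $\lVert d_2^*u\rVert_{L^2}$. There $v\mapsto\langle f,v\rangle_{L^2}$ is bounded precisely by the weight-$\gamma$ estimate, and weighted regularity then puts $d_2^*u$ in $W^{1,2}_\beta$.
\end{itemize}
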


By Definition~\ref{definition: Fredholm}, $D_{(d_2)_0} \subset \R$ is determined by the growth rates of homogeneous solutions of $(d_2)_0$ on the cylinder $\R \times \Sigma$. Every nonzero such rate has absolute value at least $\geq \sqrt{\lambda_1(\Sigma)}$, the spectral gap of $\Sigma$. This is precisely the role of $\beta^*=\min\{-\delta,-\sqrt{\lambda_1(\Sigma)}\}$ in Theorem~\ref{theorem: main theorem}. Both $(\beta^*,0)$ and its mirror $(0,-\beta^*)$ avoid $D_{(d_2)_0}$, so $d_2$ is Fredholm with closed image at every $\beta \in (\beta^*,0) \cup (0,-\beta^*)$ (Theorem~\ref{theorem: Fredholm operator}).  

In the following lemma, we identify the cokernel of $d_2$ at $\beta$ with the kernel of $d_2$ at the reflected weight $\gamma:=-\beta \in (0,-\beta^*)$.
\begin{lemma}
    $\coker (d_2:W^{1,2}_\beta \to W^{0,2}_\beta) \cong \ker (d_2^*: W^{1,2}_{\gamma} \to W^{0,2}_{\gamma})$.
    \label{lemma: duality}
\end{lemma}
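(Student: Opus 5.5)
We prove Lemma~\ref{lemma: duality} by the standard Lockhart--McOwen duality argument, adapted to the weighted spaces $W^{k,2}_\beta$ of Definition~\ref{definition: weight function Wn, norm || ||Walphakp}. The first step is to note that on the end $M\setminus \overline{\bigsqcup_i B_{2\varepsilon_i}(p_i)}$ the weight is $W_0=e^{-\frac12\beta\rho}$. There the pairing
\begin{equation*}
    \langle f, u\rangle = \int_M \langle f,u\rangle_g \vol_g
\end{equation*}
identifies $(W^{0,2}_\beta)^*$ with $W^{0,2}_{\gamma}$, $\gamma=-\beta$, via $\int e^{-\beta\rho}|f|^2$ versus $\int e^{\beta\rho}|u|^2$ and Cauchy--Schwarz. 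On the compact balls the weights are constants, of the form powers of $\overline{m}$, so they only rescale the norm and do not change the dual space as a set. Thus $L^2$-duality gives an isomorphism $(W^{0,2}_\beta)^*\cong W^{0,2}_\gamma$.

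\textbf{Step 2: closed range and cokernel as annihilator.} Since $\beta\in(\beta^*,0)$ avoids $\mathcal D_{(d_2)_0}$, Theorem~\ref{theorem: Fredholm operator} together with the remark preceding the lemma shows that $d_2:W^{1,2}_\beta\to W^{0,2}_\beta$ is Fredholm with closed image. Hence
\begin{equation*}
    \coker d_2 \cong (\operatorname{im} d_2)^{\perp}=\{u\in W^{0,2}_\gamma : \langle d_2(a,\varphi),u\rangle=0 \ \ \forall (a,\varphi)\in W^{1,2}_\beta\}.
\end{equation*}
Testing against compactly supported smooth $(a,\varphi)$, which are dense by construction of $W^{1,2}_\beta$, shows that such a $u$ is a distributional solution of $d_2^*u=0$. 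The formula for $d_2^*$ is given in \eqref{equation:d2*u=(*dA0u+[u,Phi0],-dA0*u)}.

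\textbf{Step 3: regularity in the reflected weight.} This is the main obstacle: we must show that a weak solution $u\in W^{0,2}_\gamma$ of $d_2^*u=0$ actually lies in $W^{1,2}_\gamma$, including control of the $\mathrm{ad}_{\Phi_0}$-term for $u^\perp$. The plan is as follows. First, $d_2^*$ is first-order elliptic, because $d_2d_2^*$ has the Weitzenb\"ock form \eqref{equation:d2d2*=Weitzenbock}. Second, on the end it is asymptotically cylindrical, since $\Phi_0$ is parallel to the reducible Dirac background and grows at most linearly. So Theorem~\ref{Theorem:elliptic estimate} at weight $\gamma$ upgrades $u$ to $L^2_{1,\gamma}$. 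On the balls $B_{2\varepsilon_i}(p_i)$, interior elliptic regularity applies after the rescaling encoded in $W_j$.

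The $\mathrm{ad}_{\Phi_0}$ part needs a separate argument. We pair $d_2d_2^*u=0$ with $\chi_R^2 e^{\gamma\rho}u$, using cut-offs $\chi_R$ as in Lemma~\ref{lemma: d*d(L(M)) = {intM u vol=0}}. The term $-[\Phi_0,[\Phi_0,u]]$ then contributes $\int|\Phi_0|^2|u^\perp|^2e^{\gamma\rho}$ with a good sign. The remaining terms are bounded by the $L^2_{1,\gamma}$ norm, using $e_0=0$ outside the balls and the exponential decay of $\Ric-\Ric_C$ at rate $\delta>\gamma$. Letting $R\to\infty$ gives $\mathrm{ad}_{\Phi_0}u^\perp\in W^{0,2}_\gamma$, so $u\in W^{1,2}_\gamma$.

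\textbf{Step 4: the converse.} Conversely, any $u\in\ker(d_2^*:W^{1,2}_\gamma\to W^{0,2}_\gamma)$ annihilates $\operatorname{im} d_2$. To see this, integrate by parts against $(a,\varphi)\in W^{1,2}_\beta$ with cut-offs $\chi_R$. The boundary terms are products of a $\beta$-weighted and a $\gamma$-weighted quantity on $\{R\le\rho\le R+1\}$, and these tend to $0$ because the weights $e^{-\beta\rho}$ and $e^{\beta\rho}$ cancel and both factors are square integrable. Combining Steps 2--4, the map $u\mapsto\langle\cdot,u\rangle$ is an isomorphism from $\ker(d_2^*:W^{1,2}_\gamma\to W^{0,2}_\gamma)$ onto $(\operatorname{im}d_2)^\perp\cong\coker d_2$, which proves Lemma~\ref{lemma: duality}.
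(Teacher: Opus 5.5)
Your proposal follows the same route as the paper's proof: the $L^2$ pairing identifies $(W^{0,2}_\beta)^*$ with $W^{0,2}_\gamma$, closedness of $\operatorname{im} d_2$ makes the cokernel the annihilator of the image, annihilators are the weak solutions of $d_2^*v=0$, and regularity places them in $W^{1,2}_\gamma$; your Steps~3--4 spell out the $\mathrm{ad}_{\Phi_0}$-part of the $W^{1,2}_\gamma$ norm and the converse inclusion, which the paper compresses into ``elliptic regularity and the ACyl structure''. Step~3 needs two repairs: the weight in the Weitzenb\"ock pairing must be $e^{-\gamma\rho}$ (the $W^{0,2}_\gamma$ weight), not $e^{\gamma\rho}$, and since $\lvert\Phi_0\rvert$ grows linearly the $\perp$-block of $d_2^*$ is not asymptotically cylindrical, so Theorem~\ref{Theorem:elliptic estimate} does not by itself give $u^\perp\in L^2_{1,\gamma}$ --- instead, pair $d_2d_2^*u=0$ with $\chi_R^2e^{-\gamma\rho}u$ and absorb the cross term by Young's inequality, which bounds $\nabla u^\perp$ and $\Phi_0u^\perp$ in $W^{0,2}_\gamma$ simultaneously and uniformly in $R$.
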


\begin{proof}
    Since $W_0(\beta)\cdot W_0(\gamma)=1$ outside $\bigsqcup\limits_{i=1}^n B_{2\varepsilon_i}(p_i)$ and both equal $\overline{m}^{-2}$ inside, Cauchy–Schwarz gives $\lvert \langle u,v \rangle_{L^2} \rvert \leq c(\overline{m})\lVert u \rVert_{W^{0,2}_\beta} \lVert v \rVert_{W^{0,2}_\gamma}$ for all $u \in W^{0,2}_\beta,v \in W^{0,2}_\gamma$. Hence the pairing is well-defined and bounded, giving a Riesz-type identification $(W^{0,2}_\beta)^* \cong W^{0,2}_\gamma$. An element $v \in (W^{0,2}_\beta)^*$ annihilates $\im(d_2)$ iff $\langle d_2f,v\rangle_{L^2} =\langle f,d_2^*v\rangle_{L^2}=0$ for all smooth compactly supported $f$, i.e. $d_2^*v=0$. Elliptic regularity (Theorem~\ref{Theorem:elliptic estimate}) and the ACyl structure give $v \in W^{1,2}_\gamma$. Since $d_2$ has closed image, $\coker (d_2)$ is exactly the annihilator of $\im(d_2)$ inside $(W^{0,2}_\beta)^* \cong W^{0,2}_\gamma$.
\end{proof}

\begin{lemma}
    Let $\gamma \in (0,-\beta^*)$. If $\overline{m}$ is sufficiently large, then there exists a constant $C'>0$ such that for all $f \in W^{1,2}_{\gamma}$,
    \begin{equation*}
        \Vert d_2^*(\chi f) \Vert_{W^{0,2}_{\gamma}} \geq C' \Vert \chi f \Vert_{W^{1,2}_{\gamma}}.
    \end{equation*}
    \label{lemma: surjective1}
\end{lemma}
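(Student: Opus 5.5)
The plan is to exploit that on $\supp \chi \subset M\setminus \bigsqcup_{i=1}^n B_{2\varepsilon_i}(p_i)$ the approximate solution coincides with the scaled $\SU(2)$-Dirac monopole $(A_D^{\overline{m}},\Phi_D^{\overline{m}})$. There it is a genuine, reducible monopole: $e_0=0$, $A_0$ preserves the splitting $\mathfrak g_P=\underline{\R}\oplus L$, and $\Phi_0$ is a parallel-direction section.

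Set $w=\chi f$. Then $d_2^*$ preserves the splitting and $d_2^*w=d_2^*w^\parallel\oplus d_2^*w^\perp$, with the two pieces orthogonal pointwise. Also, since $\chi$ vanishes on $\bigsqcup_i B_{2\varepsilon_i}(p_i)$, the $W^{k,2}_\gamma$-norms of $w$ only involve the outer region. It therefore suffices to prove the estimate separately for $w^\parallel$ and $w^\perp$ and add the results.

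\emph{Parallel part.} Since $[w^\parallel,\Phi_0]=0$ and $A_0$ is trivial on $\underline{\R}$, we have $d_2^*w^\parallel=(*dw^\parallel,-d^*w^\parallel)$, where $w^\parallel$ is an ordinary real $1$-form extended by zero to all of $M$. Under the isometry $\Omega^0\cong\Omega^3$ given by $*$, this is exactly $(d+d^*)$ acting on $(w^\parallel,0)\in\Omega^1\oplus\Omega^3$, up to signs and applying $*$ on the $\Omega^2$-component. Outside the balls the $W^{0,2}_\gamma$ and $W^{1,2}_\gamma$ norms of parallel sections agree with the $L^2_{0,\gamma}$ and $L^2_{1,\gamma}$ norms. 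Hence Lemma~\ref{lemma:|u|<=C|(d+d*)u|}, which uses $b^1(M)=0$ and $\gamma\in(0,-\beta^*)$ with $\gamma\notin\mathcal D_{(d+d^*)_0}$, gives $\|d_2^*w^\parallel\|_{W^{0,2}_\gamma}\ge C\|w^\parallel\|_{W^{1,2}_\gamma}$ with $C$ independent of $\overline{m}$. If $\gamma$ happens to lie in the discrete set $\mathcal D_{(d+d^*)_0}$, I would perturb it slightly, using the discreteness in Theorem~\ref{theorem: Fredholm operator}.

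\emph{Perpendicular part.} Here I would use the monopole Weitzenb\"ock formula~\eqref{equation:d2d2*=Weitzenbock} with $e_0=0$, tested against $e^{-\gamma\rho}w^\perp$ and integrated by parts. Equivalently, expand $|d_2^*w^\perp|^2e^{-\gamma\rho}$ directly. This gives
\begin{equation*}
\|d_2^*w^\perp\|_{W^{0,2}_\gamma}^2=\int_M e^{-\gamma\rho}\big(|\nabla_{A_0}w^\perp|^2+|[\Phi_0,w^\perp]|^2+\langle \Ric(w^\perp),w^\perp\rangle\big)\vol_g+\mathcal E ,
\end{equation*}
where $\mathcal E$ collects the boundary-type terms produced by differentiating the weight. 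These are bounded by $C\gamma\int e^{-\gamma\rho}|d\rho|\,|w^\perp|\big(|\nabla_{A_0}w^\perp|+|\Phi_0||w^\perp|\big)$. By Remark~\ref{remark:calculation of ad Phi0}, $|[\Phi_0,w^\perp]|=|\Phi_0||w^\perp|$, and by Lemma~\ref{lemma:large Higgs field}, $|\Phi_0|\ge \overline{m}/2$ on $\supp\chi$. $\Ric$ is bounded on the ACyl manifold. Then Cauchy--Schwarz gives
\begin{equation*}
|\mathcal E|\le \tfrac12\!\int e^{-\gamma\rho}|\nabla w^\perp|^2+C(\gamma^2+\gamma|\Phi_0|)\!\int e^{-\gamma\rho}|w^\perp|^2 .
\end{equation*}
For $\overline{m}$ large, the right-hand side and the $\Ric$ term are absorbed by $\int e^{-\gamma\rho}|\Phi_0|^2|w^\perp|^2$. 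This yields $\|d_2^*w^\perp\|^2_{W^{0,2}_\gamma}\ge C\big(\|e^{-\frac12\gamma\rho}\nabla w^\perp\|^2+\|e^{-\frac12\gamma\rho}\mathrm{ad}_{\Phi_0}w^\perp\|^2\big)$, which is precisely the $\perp$-part of the $W^{1,2}_\gamma$ norm in Definition~\ref{definition: weight function Wn, norm || ||Walphakp}.

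Summing the two parts gives the lemma with $C'=\min$ of the two constants. The main obstacle I expect is the careful bookkeeping in the perpendicular step. Two points need checking. First, the cross term $2\langle *d_{A_0}w^\perp,[w^\perp,\Phi_0]\rangle$ integrates by parts to $[d_{A_0}\Phi_0\wedge w^\perp]$ and $*F_{A_0}$ terms that cancel exactly because $e_0=0$ on $\supp\chi$. Second, the leftover weight-derivative term of size $\gamma|\Phi_0||w^\perp|^2$, which is linear rather than quadratic in $|\Phi_0|$, must still be dominated. This works because the lower bound $|\Phi_0|\ge\overline{m}/2$ holds uniformly, including along the end where $\Phi_0$ grows linearly.
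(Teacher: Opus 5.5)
Your proposal is correct and is essentially the paper's proof. It uses the same splitting $\underline{\R}\oplus L$ on $\supp\chi$ (where $e_0=0$), the same positive-weight injectivity estimate for $d+d^*$ (using $b^1(M)=0$) on $\chi f^\parallel$, and the same weighted Weitzenb\"ock identity on $\chi f^\perp$, with the Ricci and weight-derivative terms absorbed via $|\Phi_0|\geq \overline{m}/2$.

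Your closing step, which keeps $\int e^{-\gamma\rho}|\Phi_0|^2|w^\perp|^2$ and dominates the $\Ric$, $\gamma^2$ and linear-in-$|\Phi_0|$ terms by it, is cleaner than the paper's. The paper instead trades $\lVert\chi f^\perp\rVert$ back for $\lVert \Phi_0\chi f^\perp\rVert$ using $|\Phi_0|\leq \overline{m}+m_1^O$, a bound that fails along the end where $\Phi_0$ grows linearly. You should drop your fallback of perturbing $\gamma$ off $\mathcal{D}_{(d+d^*)_0}$, though: an estimate at a nearby weight does not transfer back to $\gamma$, so, like the paper, just use that $(0,-\beta^*)$ is chosen to avoid the indicial roots.
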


\begin{proof}
Since $\supp(\chi f)\subset M\setminus \bigsqcup\limits_{i=1}^n B_{2\varepsilon_i}(p_i)$, and since the splitting $\mathfrak {g}_P|_{M\setminus\bigsqcup\limits_{i=1}^n B_{2\varepsilon_i}(p_i)}=\underline{\R}\oplus L$
is preserved by $d_2^*$, we
have the orthogonal decomposition
    \begin{equation*}
        \Vert d_2^*(\chi f) \Vert_{W^{0,2}_{\gamma}}^2 = \Vert d_2^*(\chi f^\parallel \oplus \chi f^\perp) \Vert_{W^{0,2}_{\gamma}}^2 =  \Vert d_2^*(\chi f^\parallel) \Vert_{W^{0,2}_{\gamma}}^2 + \Vert d_2^*( \chi f^\perp) \Vert_{W^{0,2}_{\gamma}}^2.
    \end{equation*}
We bound each term in the background $(A_0,\Phi_0)=(A_D^{\overline{m}},\Phi_D^{\overline{m}})$ separately. 

    \begin{enumerate}[label=\textbf{(\arabic*)},listparindent=\parindent]
    
    \item \textbf{Longitudinal Component $\chi f^\parallel$.}

    On the $\R$-summand the connection $A_0$ restricts to the trivial
connection $d$, and $[\chi f^\parallel,\Phi_0]=0$ since $\R$ is central. Hence, in the sense of $\Omega^0\oplus\Omega^2$,
    \begin{equation*}
        d_2^*(\chi f^\parallel) = (*d_{A_0}(\chi f^\parallel)+[\chi f^\parallel,\Phi_0] , -d_{A_0}^*(\chi f^\parallel)) = (*d(\chi f^\parallel), -d^*(\chi f^\parallel)).
    \end{equation*}

Since
$\gamma\in(0,-\beta^*)$, $\gamma\notin D_{(d+d^*)_0}$, and $b^1(M)=0$, Lemma
\ref{lemma:|u|<=C|(d+d*)u|} gives a constant $C^\parallel>0$
such that
    \begin{equation*}
        \lVert d_2^*(\chi f^\parallel) \rVert_{W^{0,2}_{\gamma}}
        = \lVert (d+d^*)(\chi f^\parallel) \rVert_{L^2_{0,\gamma}} 
        \geq C^\parallel \lVert \chi f^\parallel \rVert_{L^2_{1,\gamma}}
        =C^\parallel \lVert \chi f^\parallel \rVert_{W^{1,2}_{\gamma}},
    \end{equation*}
    where the last equality is due to the fact that the two norms are equivalent outside $\bigsqcup\limits_{i=1}^n B_{2\varepsilon_i}(p_i)$.

\item \textbf{Transverse Component $\chi f^\perp$.}

Since $d_2^*$ is the formal adjoint of $d_2$ with respect to the unweighted $L^2$-inner product,
\begin{align*}
    \lVert d_2^*(\chi f^\perp) \rVert_{W^{0,2}_{\gamma}}^2 &=\int_{M \setminus \bigsqcup\limits_{i=1}^nB_{2\varepsilon_i}(p_i)} e^{-\gamma\rho} \langle d_2^*(\chi f^\perp),d_2^*(\chi f^\perp) \rangle \vol_g \\
    &=\int_{M \setminus \bigsqcup\limits_{i=1}^nB_{2\varepsilon_i}(p_i)} \langle \chi f^\perp, d_2(e^{-\gamma\rho}d_2^*(\chi f^\perp)) \rangle \vol_g.
\end{align*}
By the Leibniz rule, $d_2(e^{-\gamma\rho}d_2^*(\chi f^\perp))=e^{-\gamma\rho} d_2(d_2^*(\chi f^\perp)) -\gamma e^{-\gamma\rho} \sigma_{d_2}(d \rho) d_2^*(\chi f^\perp)$, where $\sigma_{d_2}(d\rho)$ is the principal symbol of $d_2$ in the direction $d\rho$, satisfying $\lvert \sigma_{d_2}(d\rho)\rvert< C_1$ for a constant $C_1>0$ depending on $(M,g)$ (independent of $\overline{m}$, since $\lvert\nabla\rho\rvert_g$ is uniformly bounded). Hence
\begin{align*}
    \lVert d_2^*(\chi f^\perp) \rVert^2_{W^{0,2}_{\gamma}} = &\int_{M \setminus \bigsqcup\limits_{i=1}^nB_{2\varepsilon_i}(p_i)} e^{-\gamma\rho} \langle \chi f^\perp,d_2(d_2^*(\chi f^\perp)) \rangle \vol_g \nonumber\\
    &- \gamma\int_{M \setminus \bigsqcup\limits_{i=1}^nB_{2\varepsilon_i}(p_i)} e^{-\gamma\rho} \langle \chi f^\perp, \sigma_{d_2}(d\rho) d_2^*(\chi f^\perp) \rangle \vol_g,    
\end{align*}
and by Cauchy–Schwarz the second term is bounded by
\begin{equation}
    \lvert \gamma \rvert C_1 \lVert \chi f^\perp \rVert_{W^{0,2}_{\gamma}} \lVert d_2^*(\chi f^\perp) \rVert_{W^{0,2}_{\gamma}}. 
    \label{equation: commutator term}
\end{equation}

Since $e_0=0$ in this region, the monopole Weitzenb\"ock formula~\eqref{equation:d2d2*=Weitzenbock} gives
    \begin{equation*}
        d_2d_2^*(\chi f^\perp) = \nabla_{A_0}^* \nabla_{A_0} (\chi f^\perp) + \lvert \Phi_0 \rvert^2 (\chi f^\perp) + \Ric (\chi f^\perp),
    \end{equation*}
    where $-[\Phi_0,[\Phi_0,\chi f^\perp]] = \lvert \Phi_0\rvert^2 (\chi f^\perp)$ because of~\eqref{equation:|[X3s3,Y1s1+Y2s2]|=2|X3s3||Y1s1+Y2s2|}. Integrating by parts with respect to the weighted inner product yields
    \begin{align*}
        \int_{M \setminus \bigsqcup\limits_{i=1}^nB_{2\varepsilon_i}(p_i)} e^{-\gamma\rho} \langle \chi f^\perp, d_2d_2^*(\chi f^\perp) \rangle \vol_g = &\lVert \nabla_{A_0} (\chi f^\perp) \rVert^2_{W^{0,2}_{\gamma}} + \lVert \Phi_0 (\chi f^\perp) \rVert^2_{W^{0,2}_{\gamma}} \\
        &+
        \int_{M \setminus \bigsqcup\limits_{i=1}^nB_{2\varepsilon_i}(p_i)} e^{-\gamma\rho} \langle \chi f^\perp, \Ric(\chi f^\perp) \rangle \vol_g+ E_1,
    \end{align*}
    where the commutator term $E_1$ can be similarly bounded as~\eqref{equation: commutator term}, i.e.
    \begin{equation*}
        \lvert E_1 \rvert \leq C_2 \lVert \chi f^\perp \rVert_{W^{0,2}_{\gamma}} \lVert \nabla_{A_0} (\chi f^\perp) \rVert_{W^{0,2}_{\gamma}}
    \end{equation*}
    with constant $C_2>0$ independent of $\overline{m}$. Then, we have
    \begin{align*}
        \lVert d_2^*(\chi f^\perp) \rVert^2_{W^{0,2}_{\gamma}} &\geq \lVert \nabla_{A_0} (\chi f^\perp) \rVert^2_{W^{0,2}_{\gamma}} + \lVert \Phi_0 (\chi f^\perp) \rVert^2_{W^{0,2}_{\gamma}} -\sup_M \lvert \Ric \rvert \lVert \chi f^\perp \rVert^2_{W^{0,2}_{\gamma}} \\
        &\quad -\lvert E_1 \rvert-\lvert \gamma \rvert C_1 \lVert \chi f^\perp \rVert_{W^{0,2}_{\gamma}} \lVert d_2^*(\chi f^\perp) \rVert_{W^{0,2}_{\gamma}},
    \end{align*}
    and by Young's inequality $ab \leq \frac{1}{4}a^2+b^2$,
    \begin{align*}
        C_2 \lVert \chi f^\perp \rVert_{W^{0,2}_{\gamma}} \lVert \nabla_{A_0} (\chi f^\perp) \rVert_{W^{0,2}_{\gamma}} &\leq \frac{1}{4} \lVert \nabla_{A_0} (\chi f^\perp) \rVert_{W^{0,2}_{\gamma}}^2 + C_2^2 \lVert \chi f^\perp \rVert_{W^{0,2}_{\gamma}}^2, \\
        \lvert \gamma \rvert C_1 \lVert \chi f^\perp \rVert_{W^{0,2}_{\gamma}} \lVert d_2^*(\chi f^\perp) \rVert_{W^{0,2}_{\gamma}} &\leq\frac{1}{4}\lVert d_2^*(\chi f^\perp) \rVert_{W^{0,2}_{\gamma}}^2 +  (\gamma C_1)^2 \lVert \chi f^\perp \rVert_{W^{0,2}_{\gamma}}^2.
    \end{align*}
    Hence,
    \begin{equation*}
        \lVert d_2^*(\chi f^\perp) \rVert^2_{W^{0,2}_{\gamma}} +\frac{1}{4} \lVert d^*_2(\chi f^\perp) \rVert_{W^{0,2}_{\gamma}}^2 \geq  (1-\frac{1}{4}) \lVert \nabla_{A_0} (\chi f^\perp) \rVert^2_{W^{0,2}_{\gamma}} + \lVert \Phi_0 (\chi f^\perp) \rVert^2_{W^{0,2}_{\gamma}} 
        - K(\gamma) \lVert \chi f^\perp \rVert^2_{W^{0,2}_{\gamma}},
    \end{equation*}
    where $K(\gamma):=(\sup\limits_M \lvert \Ric \rvert + C_2^2 + \gamma^2 C_1^2)$ is uniformly bounded for any $\gamma \in (0,-\beta^*)$ and independent of $\overline{m}$. Multiplying both sides of the inequality by $\frac{4}{5}$,
    \begin{align*}
        \lVert d_2^* (\chi f^\perp) \rVert^2_{W^{0,2}_{\gamma}} &\geq \frac{3}{5} \lVert \nabla_{A_0} (\chi f^\perp) \rVert^2_{W^{0,2}_{\gamma}} + \frac{4}{5} \lVert \Phi_0 (\chi f^\perp) \rVert^2_{W^{0,2}_{\gamma}} 
        - \frac{4}{5}  K(\gamma) \lVert \chi f^\perp \rVert^2_{W^{0,2}_{\gamma}} \\
        &\geq \frac{3}{5} \lVert \nabla_{A_0} (\chi f^\perp) \rVert^2_{W^{0,2}_{\gamma}} + \frac{4}{5} \frac{\overline{m}^2}{4} \lVert  \chi f^\perp \rVert^2_{W^{0,2}_{\gamma}} 
        - \frac{4}{5}  K(\gamma) \lVert \chi f^\perp \rVert^2_{W^{0,2}_{\gamma}} \qquad (\text{Lemma}~\ref{lemma:large Higgs field})\\
        &= \frac{3}{5} \lVert \nabla_{A_0} (\chi f^\perp) \rVert^2_{W^{0,2}_{\gamma}} + (\frac{\overline{m}^2}{5}- \frac{4}{5}  K(\gamma) ) \lVert  \chi f^\perp \rVert^2_{W^{0,2}_{\gamma}}  \\
        &\geq \frac{3}{5} \lVert \nabla_{A_0} (\chi f^\perp) \rVert^2_{W^{0,2}_{\gamma}} + \frac{\frac{\overline{m}^2}{5}- \frac{4}{5}  K(\gamma) }{(\overline{m}+m_1^O)^2} \lVert  \Phi_0 (\chi f^\perp) \rVert^2_{W^{0,2}_{\gamma}}  \\
        &\geq C^\perp \lVert \chi f^\perp \rVert^2_{W^{1,2}_{\gamma}},
    \end{align*}
    where $C^\perp:= \min \{ \frac{3}{5},\frac{\frac{\overline{m}^2}{5}- \frac{4}{5}  K(\gamma) }{(\overline{m}+m_1^O)^2} \}$ for $\overline{m}\geq 2 \sqrt{K(\gamma)}$ large enough.
\end{enumerate}

Thus, with respect to $\lVert \chi f\rVert^2_{W^{1,2}_{\gamma}}=\lVert\chi f^\parallel\rVert^2_{W^{1,2}_{\gamma}}+\lVert\chi f^\perp\rVert^2_{W^{1,2}_{\gamma}}$, we have
\begin{equation*}
    \lVert d_2^*(\chi f)\rVert_{W^{0,2}_{\gamma}}\geq C'\lVert\chi f\rVert_{W^{1,2}_{\gamma}},\qquad C':=\min\{C^\parallel,\sqrt{C^\perp}\}>0,
\end{equation*}
for sufficiently large $\overline{m}$, which proves the lemma.
\end{proof}

\begin{lemma}
    Let $\gamma \in (0,-\beta^*)$. If $\overline{m}$ is sufficiently large, then there exists a constant $C>0$ such that for all $f \in W^{1,2}_{\gamma}$,
    \begin{equation*}
        \Vert d_2^* f \Vert_{W^{0,2}_{\gamma}} \geq C \Vert f \Vert_{W^{1,2}_{\gamma}}.
    \end{equation*}
    \label{lemma: surjective2}
\end{lemma}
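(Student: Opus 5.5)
The plan is to split $f = \chi f + (1-\chi) f$ and combine Lemma~\ref{lemma: surjective1} for the outer piece with a local estimate on the balls $B_{3\varepsilon_i}(p_i)$ for the inner piece. The commutator terms produced by the cut-off are then absorbed using the smallness estimate~\eqref{equation: estimate of cut-off function}.

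\textbf{Inner piece.} Write $f_i := (1-\chi)f|_{B_{3\varepsilon_i}(p_i)}$. By the scaling identity~\eqref{equation: scaling identity}, the weights $W_j = \overline{m}^{j-2}$ turn the $W^{k,2}_\gamma$-norms on $B_{3\varepsilon_i}(p_i)$ into ordinary unweighted Sobolev norms for the rescaled data $\eta_i^*$ with respect to $g_{\overline{m}^{-1}}$. In these rescaled coordinates the configuration $(A_0,\Phi_0)$ is an $O(\overline{m}^{-1})$-perturbation of a BPS monopole of mass $\lambda_i/\overline{m} \approx 1$ on a ball of radius $3\sqrt{\overline{m}}\to\infty$. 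Three error sources enter: the metric deviation, the error term $e_0$ (which is $O(\overline{m}^{-1})$ by Lemma~\ref{lemma: error trem estimation_1}), and the exponentially small gluing error.

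For the flat BPS monopole on $\R^3$, the operator $d_2^*$ is injective with $\|d_2^* u\|_{L^2} \geq c\|u\|_{L^2_1}$ on $L^2_1$. This follows from the Weitzenb\"ock formula~\eqref{equation:d2d2*=Weitzenbock} with $\Ric = 0$ and $e_0 = 0$:
\begin{equation*}
\|d_2^*u\|^2 = \|\nabla u\|^2 + \|[\Phi,u]\|^2,
\end{equation*}
together with the fact that the kernel of $d_2^*$ consists of the $L^2$ tangent directions to the moduli space. Those directions are absent in the relevant component once we fix the gauge/centre via the slice $d_1^*=0$. More precisely, I would prove the weaker bound $\|u\|_{L^2_1}\le c\,(\|d_2^*u\|_{L^2}+\|u\|_{L^2(\text{annulus})})$ and handle the annulus term by matching with the outer estimate.

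Perturbing by $O(\overline{m}^{-1})$ keeps a uniform constant for large $\overline{m}$. Rescaling back gives
\begin{equation*}
\|d_2^* f_i\|_{W^{0,2}_\gamma} \geq C''\|f_i\|_{W^{1,2}_\gamma}.
\end{equation*}

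\textbf{Gluing the two estimates.} We have
\begin{equation*}
d_2^* f = d_2^*(\chi f) + d_2^*((1-\chi)f).
\end{equation*}
The commutators $\pm\sigma_{d_2^*}(d\chi)f$ are supported in the annuli $B_{3\varepsilon_i}\setminus B_{2\varepsilon_i}$, where $|d\chi| = O(\overline{m}^{1/2})$. On the annuli the weights are comparable to the interior scale $\overline{m}^{-2}$, and $|\Phi_0|\geq \overline{m}/2$ by Lemma~\ref{lemma:large Higgs field}. The $\perp$-component of the commutator is therefore controlled by $\overline{m}^{-1/2}\|\mathrm{ad}_{\Phi_0} f\|$, which is part of the $W^{1,2}_\gamma$ norm. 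This bounds the commutator by $o(1)\|f\|_{W^{1,2}_\gamma}$, and it can then be absorbed using Lemma~\ref{lemma: surjective1}, the inner estimate, and the triangle inequality.

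\textbf{Main obstacle.} I expect the hardest step to be the $\parallel$-component of the commutator on the annulus, since it gets no help from $\Phi_0$. A possible fix is to use a logarithmic cut-off across $[\varepsilon_i, \varepsilon_i^{1/2}]$, or to exploit that $f^\parallel$ is governed by the invertible operator $d+d^*$ of Lemma~\ref{lemma:|u|<=C|(d+d*)u|} globally on $M$, whose weighted estimate does not see the balls at all. If this works, the constant $C=\tfrac12\min\{C',C''\}$ is independent of $\overline{m}$.
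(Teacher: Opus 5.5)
\textbf{The step that fails is your $\overline m$-uniform inner estimate.} Your argument rests on $\lVert d_2^*f_i\rVert_{W^{0,2}_\gamma}\ge C''\lVert f_i\rVert_{W^{1,2}_\gamma}$ with $C''$ independent of $\overline m$. The paper does not use a model estimate on the balls. It takes a normalized minimizer $f_*$, asserts $\lVert W_0 f_*\rVert_{L^\infty(B_{3\varepsilon_i})}\le C_3$, makes the commutator and the ball contribution small by volume, and then applies Lemma~\ref{lemma: surjective1} to $\chi f_*$.

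Your inner step cannot work because the weights $W_0=\overline m^{-2}$, $W_1=\overline m^{-1}$ are not scale-compatible. Under $y=\overline m x$ a derivative costs a factor $\overline m$, so an unweighted norm in $y$ corresponds to $W_1/W_0=\overline m^{-1}$, whereas here $W_1/W_0=\overline m$. Concretely, take a high-frequency bump $f$ supported in $B_{\varepsilon_i}(p_i)$ with $\lVert\nabla_{A_0}f\rVert_{L^2}\ge\overline m\lVert f\rVert_{L^2}$. Since $|\Phi_0|\le\lambda_i=O(\overline m)$ there,
\[
\lVert d_2^*f\rVert_{W^{0,2}_\gamma}\le\overline m^{-2}\bigl(C\lVert\nabla_{A_0}f\rVert_{L^2}+\lambda_i\lVert f\rVert_{L^2}\bigr)\le C\,\overline m^{-2}\lVert\nabla_{A_0}f\rVert_{L^2}\le C\,\overline m^{-1}\lVert f\rVert_{W^{1,2}_\gamma}.
\]
So no uniform $C''$ exists. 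In fact this example shows the lemma itself cannot hold with an $\overline m$-independent constant for the norms as defined, and that is what the paper's convention for $C$ and the proof of Theorem~\ref{theorem: main theorem} need. The same example also defeats the paper's step $\sum_i\lVert f_*\rVert^2_{W^{1,2}_\gamma(B_{3\varepsilon_i})}=O(\overline m^{-3/2})$: an $L^\infty$ bound on $W_0f_*$ does not control $W_1\nabla_{A_0}f_*$. So this gap is not one a better technique would close.

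\textbf{Other ingredients are also wrong, independently of the weights.}
\begin{itemize}
\item On $\mathbb R^3$ the bound $\lVert d_2^*u\rVert_{L^2}\ge c\lVert u\rVert_{L^2_1}$ is false. The Weitzenb\"ock identity gives only $\lVert\nabla_Au\rVert^2+\lVert[\Phi,u]\rVert^2$. This does not control $\lVert u\rVert_{L^2}$ for the massless component parallel to $\Phi$: bumps $\psi(x/R)\,(\Phi/\lvert\Phi\rvert)\,dx^1$ supported near $|x|\sim R$ have ratio $O(R^{-1})$.
\item $\ker d_2^*$ is not the tangent space of the moduli space; that is $\ker(d_2\oplus d_1^*)$. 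Also, the slice $d_1^*=0$ constrains $(a,\varphi)$, not $u\in\Omega^1$, so it removes nothing here.
\item The annuli $B_{3\varepsilon_i}\setminus B_{2\varepsilon_i}$ lie outside $B_{2\varepsilon_i}$, where $W_j=e^{-\gamma\rho/2}$ is of order one, not $\overline m^{-2}$. This also breaks your claim that the norms on all of $B_{3\varepsilon_i}$ become rescaled unweighted norms.
\item With the correct weights, your $\perp$ commutator bound $O(\overline m^{-1/2})\lVert f\rVert_{W^{1,2}_\gamma}$ does hold. The $\parallel$ commutator, however, is $\overline m^{1/2}\lVert f^\parallel\rVert_{L^2(\mathrm{ann})}$. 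H\"older on a region of volume $O(\overline m^{-3/2})$ bounds this only by $C\lVert f^\parallel\rVert_{L^6}$, with no small factor.
\end{itemize}
You correctly flag the $\parallel$ commutator, but neither proposed fix works as stated. The splitting, and hence $f^\parallel$, exists only on $M\setminus\bigsqcup_i B_{\varepsilon_i}(p_i)$, and $d_2^*$ couples the two components inside the balls, so the global weighted $d+d^*$ estimate does not apply to $f^\parallel$. A logarithmic cut-off would have to be reconciled with the jump of the weights at $\partial B_{2\varepsilon_i}$, which is built into the norms.
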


\begin{proof}
    Consider the minimization problem
    \begin{equation*}
        J(f) = \Vert d_2^* f \Vert_{W^{0,2}_{\gamma}}^2, \qquad \text{subject to }\Vert f \Vert_{W^{1,2}_{\gamma}}=1.
    \end{equation*}
    Let $f_*$ be a minimizer of $J$, satisfying the elliptic Euler-Lagrange equation
    \begin{equation*}
        d_2d_2^* f_* + g_1d_2^*f_* + g_2f_*=0,
    \end{equation*}
    where $g_1$ and $g_2$ are smooth and bounded functions. By elliptic regularity, $\lVert f_*\rVert_{L^\infty}$ is uniformly bounded on compact sets. Near each $p_i$, 
    \begin{equation}
        \Vert W_0 f_* \Vert_{L^\infty (\overline{B_{3\varepsilon_i}(p_i)})} \leq C_3.
        \label{equation: L^infty}
    \end{equation}
    
    Let $\sigma_{d_2^*}$ be the principal symbol of $d_2^*$, and the Leibniz rule gives
    \begin{equation*}
        \chi d^*_2f_* =  d_2^*(\chi f_*) -\sigma_{d_2^*}(d\chi) f_*.
    \end{equation*}
    Taking norms and using the elementary inequality $(a-b)^2\geq \frac{1}{2}a^2-b^2$, we get
    \begin{equation*}
        \Vert \chi d^*_2f_* \Vert_{W^{0,2}_{\gamma}}^2 \geq \frac{1}{2}
        \Vert d_2^*(\chi f_*) \Vert_{W^{0,2}_{\gamma}}^2 - \Vert \sigma_{d_2^*}(d\chi) f_* \Vert_{W^{0,2}_{\gamma}}^2.
    \end{equation*}
    Since $\lVert \chi d_2^*f_* \rVert_{W^{0,2}_{\gamma}}^2 \leq \lVert d_2^* f_* \rVert_{W^{0,2}_{\gamma}}^2 \; (0\leq \chi \leq1)$, we obtain
    \begin{equation}
        \Vert d^*_2f_* \Vert_{W^{0,2}_{\gamma}}^2 \geq \frac{1}{2}
        \Vert d_2^*(\chi f_*) \Vert_{W^{0,2}_{\gamma}}^2 - \Vert \sigma_{d_2^*}(d\chi) f_* \Vert_{W^{0,2}_{\gamma}}^2,
        \label{equation: Leibniz rule}
    \end{equation}
    together with
    \begin{align*}
        \Vert \sigma_{d_2^*}(d\chi) f_* \Vert_{W^{0,2}_{\gamma}}^2 &\leq \sum\limits_{i=1}^n \int_{B_{3\varepsilon_i}(p_i) \setminus B_{2\varepsilon_i}(p_i)} \lvert \sigma_{d^*_2}\rvert^2 \lvert d \chi \rvert^2_g \lvert W_0 f_* \rvert^2_g \vol_g \quad (\supp(d\chi) \subset \bigsqcup\limits_{i=1}^n B_{3\varepsilon_i}(p_i) \setminus B_{2\varepsilon_i}(p_i)) \\
        &\leq C_3 C_4 O(\overline{m}) O(\varepsilon_i^3) \qquad (\text{Equations}~\eqref{equation: estimate of cut-off function},~\eqref{equation: L^infty},\text{and $\lvert\sigma_{d_2^*}\rvert$ is bounded})\\
        &\leq C_5 \overline{m}^{-\frac{1}{2}}
    \end{align*}
    Hence, Equation~\eqref{equation: Leibniz rule} can be rewritten as
    \begin{align*}
        \lVert d_2^* f_* \rVert_{W^{0,2}_{\gamma}}^2 &\geq \frac{1}{2}\lVert d_2^*(\chi f_*) \rVert^2_{W^{0,2}_{\gamma}} - C_5 \overline{m}^{-\frac{1}{2}} \\
        &\geq \frac{(C')^2}{2} \lVert \chi f_* \rVert_{W^{1,2}_{\gamma}}^2  - C_5 \overline{m}^{-\frac{1}{2}} \qquad (\text{Lemma}~\ref{lemma: surjective1}) \\
        &\geq \frac{(C')^2}{2} (\frac{1}{2} \Vert f_* \Vert_{W^{1,2}_{\gamma}}^2 -  \Vert (1-\chi) f_* \Vert_{W^{1,2}_{\gamma}}^2)  - C_5 \overline{m}^{-\frac{1}{2}}\\
        &\geq \frac{(C')^2}{4}  - \frac{(C')^2}{2} \sum\limits_{i=1}^n \Vert f_* \Vert_{W^{1,2}_{\gamma}(B_{3\varepsilon_i}(p_i))}^2  - C_5 \overline{m}^{-\frac{1}{2}} \qquad (\lVert f_* \rVert_{W^{1,2}_{\gamma}}=1) .
    \end{align*}  
    Then, for $\overline{m}$ sufficiently large,
    \begin{align*}
        \Vert d_2^* f_* \Vert_{W^{0,2}_{\gamma}}^2
        &\geq \frac{(C')^2}{4}  -   O(\overline{m}^{-\frac{3}{2}})  -C_5\overline{m}^{-\frac{1}{2}} \geq \frac{(C')^2}{16} .
    \end{align*}
    
    Since this holds for every minimizer of $J$ over the unit sphere, homogeneity of $J$ gives
    \begin{equation*}
        \Vert d_2^* f \Vert_{W^{0,2}_{\gamma}}^2 \geq \frac{(C')^2}{16}  \Vert f \Vert_{W^{1,2}_{\gamma}}^2,
    \end{equation*}
    where $C:=\frac{C'}{4}$. We complete this proof.
\end{proof}

\begin{proof}[Proof of Theorem~\ref{theorem: surjective}]
    By Lemma~\ref{lemma: duality},~\ref{lemma: surjective2},
    \begin{equation*}
        \coker (d_2:W^{1,2}_\beta \to W^{0,2}_\beta) \cong \ker (d_2^*: W^{1,2}_{\gamma} \to W^{0,2}_{\gamma}) =\{0\}.
    \end{equation*}
    Combining with the closedness of the image of $d_2$, we obtain $d_2$ is surjective. Moreover, there is a bounded right inverse
    \begin{equation*}
        d_2^{-1}=d_2^*(d_2d_2^*)^{-1}:W^{0,2}_{\beta} \to W^{1,2}_{\beta},
    \end{equation*}
    because of
    \begin{equation*}
        \langle d_2d_2^* f ,f \rangle_{L^2} = \lVert d_2^* f \rVert_{L^2}^2 \gtrsim \lVert d_2^* f \rVert_{W^{0,2}_{\gamma}}^2 \geq C^2\lVert f \rVert_{W^{0,2}_{\gamma}}^2. 
    \end{equation*}
\end{proof}

\subsection{Quadratic Terms and the Contraction Mapping Theorem}\label{subsection: Quadratic Terms and the Contraction Mapping Theorem}

In this subsection, we deform the approximate solution $(A_0,\Phi_0)$ into the genuine solution $(A_0+a,\Phi_0+\varphi)$ in our main theorem~\ref{theorem: main theorem}.

Since $d_2: W_{\beta}^{1,2} \to W_{\beta}^{0,2}$ is surjective with a bounded right inverse $d_2^{-1}:W_{\beta}^{0,2} \to W_{\beta}^{1,2}$, the Bogomolny equation~\eqref{equation: not elliptic} for $(a,\varphi)$ can be rewritten as
\begin{align}
    & & d_2(a,\varphi) + Q((a,\varphi),(a,\varphi)) &= -e_0 \notag \\ 
    &\iff & d_2 \circ (d_2^{-1} f) + Q  ( d_2^{-1} f,d_2^{-1}f) &= -e_0 \quad (d_2(a,\varphi)=f \text{ has a solution for all } f) \notag \\ 
    &\iff & f + Q(d_2^{-1}f,d_2^{-1}f) &= -e_0.\label{equation:f+q(f)=e}
\end{align}
From now on, our goal is to find a solution $f \in W_{\beta}^{0,2}$ to the equation $f + Q(d_2^{-1}f,d_2^{-1}f) = -e_0$ satisfying $d_2(a,\varphi)=f$, for all $\beta \in (\beta^*,0)$.

The following theorem is a particular version of the contraction mapping theorem.
\begin{theorem}[{\cite[Lemma 7.2.23]{MR1079726}}]
    Let $B$ be a Banach space. Let $q: B \to B$ be a map with $q(0)=0$ satisfying
    \begin{equation}
        \Vert q(u) - q(v) \Vert_B \leq l(\Vert u \Vert_B + \Vert v \Vert_B)\Vert u - v \Vert_B,  
    \end{equation}
    where $l>0$ is a constant and $u,v \in B$. Then for all $e \in B$ satisfying $\Vert e \Vert_B < \frac{1}{10l}$, there is a unique solution $f$ to the fixed point equation
    \begin{equation*}
        f + q(f) = e,
    \end{equation*}
    where $\Vert f \Vert_B \leq 2 \Vert e \Vert_B$.
    \label{theorem: contraction mapping theorem}
\end{theorem}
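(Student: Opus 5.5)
The approach is the standard Picard iteration in the closed ball $\overline{B}_r=\{f\in B:\Vert f\Vert_B\le r\}$ with $r=2\Vert e\Vert_B$. Rewrite the equation $f+q(f)=e$ as the fixed point problem $f=T(f)$, where $T(f):=e-q(f)$. The plan is to show that $T$ maps $\overline{B}_r$ into itself and is a contraction there. Banach's fixed point theorem, together with completeness of $B$, then gives existence, and the a priori bound $\Vert f\Vert_B\le 2\Vert e\Vert_B$ is built into the choice of $r$.

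\textbf{Self-map.} Since $q(0)=0$, applying the hypothesis with $v=0$ gives $\Vert q(u)\Vert_B\le l\Vert u\Vert_B^2$. For $\Vert f\Vert_B\le r$ we get
\begin{equation*}
\Vert T(f)\Vert_B\le \Vert e\Vert_B+l r^2=\Vert e\Vert_B(1+4l\Vert e\Vert_B)\le \Vert e\Vert_B(1+\tfrac{4}{10})<2\Vert e\Vert_B=r,
\end{equation*}
using $\Vert e\Vert_B<\frac{1}{10l}$.

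\textbf{Contraction.} For $u,v\in\overline{B}_r$,
\begin{equation*}
\Vert T(u)-T(v)\Vert_B=\Vert q(u)-q(v)\Vert_B\le 2lr\Vert u-v\Vert_B=4l\Vert e\Vert_B\Vert u-v\Vert_B\le \tfrac{2}{5}\Vert u-v\Vert_B.
\end{equation*}
So $T$ is a contraction on the complete metric space $\overline{B}_r$, and it has a unique fixed point $f$ there, with $\Vert f\Vert_B\le 2\Vert e\Vert_B$.

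\textbf{Uniqueness.} The argument above only gives uniqueness inside $\overline{B}_r$; uniqueness in all of $B$ needs more. If $f'$ is any solution, the difference satisfies $\Vert f-f'\Vert_B\le l(\Vert f\Vert_B+\Vert f'\Vert_B)\Vert f-f'\Vert_B$, so $f=f'$ whenever $l(\Vert f\Vert_B+\Vert f'\Vert_B)<1$. Solutions with large norm are not excluded by this estimate; for example, in $B=\R$ with $q(f)=lf^2$, a second root exists. The statement should therefore be read, as in the cited source, as uniqueness among solutions with $\Vert f\Vert_B\le 2\Vert e\Vert_B$, or more generally among those with $\Vert f'\Vert_B<\frac{1}{l}-2\Vert e\Vert_B$. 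This interpretation is the main subtle point. The contraction estimates themselves are routine; the only care needed is in tracking the constants $1+\frac{4}{10}<2$ and $\frac{2}{5}<1$.
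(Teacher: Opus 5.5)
Your proof is correct. It is the standard contraction-mapping argument on a closed ball, which is how the cited Donaldson--Kronheimer lemma is proved; the paper gives no proof of its own. Your uniqueness caveat is also right: the example $q(f)=lf^2$ on $\R$ shows that uniqueness can only hold among small solutions, as in the cited source, and this costs nothing downstream because the paper only uses existence together with the bound $\Vert f\Vert_B\le 2\Vert e\Vert_B$.
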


\begin{lemma}
    Let $\beta \in (\beta^*,0)$. If $\overline{m}$ is sufficiently large, then there exists a constant $c>0$ such that
    \begin{equation*}
        \Vert Q(x,y) \Vert_{W^{0,2}_{\beta}} \leq c \Vert x \Vert_{W^{1,2}_{\beta}} \Vert y \Vert_{W^{1,2}_{\beta}}. 
    \end{equation*}
    \label{lemma: quadratic term estimation}
\end{lemma}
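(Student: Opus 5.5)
The plan is to prove the estimate pointwise and then integrate region by region, following the two-part structure of the norm in Definition~\ref{definition: weight function Wn, norm || ||Walphakp}. Write $x=(a,\varphi)$ and $y=(b,\psi)$. The symmetric bilinear form is
\begin{equation*}
    Q(x,y)=\tfrac12 *[a\wedge b]-\tfrac12([a,\psi]+[b,\varphi]),
\end{equation*}
so $\lvert Q(x,y)\rvert_g\le C\lvert x\rvert_g\lvert y\rvert_g$ pointwise, with $C$ independent of $\overline{m}$ because the bracket is bounded by the $\Ad$-invariant metric. The estimate then reduces to a weighted product estimate of the form $\lVert W_0\, uv\rVert_{L^2}\le c\lVert u\rVert_{W^{1,2}_\beta}\lVert v\rVert_{W^{1,2}_\beta}$. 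I will prove this separately on $\bigsqcup_i B_{2\varepsilon_i}(p_i)$ and on the complement $M\setminus\overline{\bigsqcup_i B_{2\varepsilon_i}(p_i)}$, and then add the squares.

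On the exterior region the weights are $W_0=W_1=e^{-\frac12\beta\rho}$, so the $W^{1,2}_\beta$-norm dominates the Lockhart--McOwen norm $L^2_{1,\beta}$ of both components. Indeed, the $\perp$ part carries the extra factor $\lvert\Phi_0\rvert\ge\overline{m}/2\ge1$ by Remark~\ref{remark:calculation of ad Phi0} and Lemma~\ref{lemma:large Higgs field}. By H\"older,
\begin{equation*}
    \int e^{-\beta\rho}\lvert u\rvert^2\lvert v\rvert^2\le \lVert e^{-\frac{\beta}{4}\rho}u\rVert_{L^4}^2\,\lVert e^{-\frac{\beta}{4}\rho}v\rVert_{L^4}^2 .
\end{equation*}
Each factor on the right is an $L^4_{0,\beta}$ norm. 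The weighted Sobolev embedding (Theorem~\ref{Theorem:weighted sobolev embedding theorem}) with $p=2$, $q=4$, $k=1$, $l=0$ applies, since $\frac12-\frac14\le\frac13$ and the weights are compatible (I will check the convention: with $\beta<0$ the factor $e^{-\beta\rho}$ grows, and $\lVert u\rVert_{L^4_{0,\beta}}\lesssim\lVert u\rVert_{L^2_{1,\beta}}$ follows on each unit cylinder piece by the unweighted embedding together with the bounded variation of $e^{-\beta\rho}$ there). This gives the exterior bound with a constant independent of $\overline{m}$. The embedding constant must be uniform up to the boundary spheres $\partial B_{2\varepsilon_i}$, whose radius shrinks. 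To handle this I extend across the balls, or equivalently use a cover by balls of unit scale away from the $p_i$, which is harmless since the exterior is a fixed geometry up to these small holes.

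On each ball $B_{2\varepsilon_i}(p_i)$ the weights are $W_0=\overline{m}^{-2}$ and $W_1=\overline{m}^{-1}$. Here I will use the rescaling of Remark~\ref{remark: weight function Wn, norm || ||Walphakp}'s scaling identity~\eqref{equation: scaling identity} with $\tau=\overline{m}^{-1}$. In the rescaled metric $g_{\overline{m}^{-1}}$ the ball becomes a Euclidean-like ball of radius $2\overline{m}\varepsilon_i\sim\overline{m}^{1/2}$ with bounded geometry, and the pieces of the $W^{1,2}_\beta$ norm become exactly the unweighted $L^2_1$ norm of the rescaled fields. The rescaled $Q$ is again the same bilinear bracket, so the $L^2_1\hookrightarrow L^4$ embedding on a bounded-geometry domain, with a uniform constant that holds because the rescaled balls only grow, together with H\"older gives $\lVert Q\rVert_{L^2}\le C\lVert x\rVert_{L^2_1}\lVert y\rVert_{L^2_1}$ in rescaled terms. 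Scaling back costs a fixed power of $\overline{m}$, and this power is the reason $c$ may depend on $\overline{m}$, as the statement allows. Counting dimensions, volume scales by $\overline{m}^{-3}$ and one gets a factor like $\overline{m}^{1/2}$.

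The main obstacle is the matching at the interface $\partial B_{2\varepsilon_i}$. The norm switches from $\overline{m}^{-2}$ weights, unsplit, to exponential weights with the $\parallel/\perp$ splitting. A product $uv$ whose factors straddle the two regions must still be controlled, because the Sobolev embedding is not local on a sharp boundary. I would first try a partition of unity subordinate to slightly enlarged regions, namely $B_{3\varepsilon_i}$ and the exterior of $B_{\varepsilon_i}$, and compare the two norms on the overlap annulus. There $\lvert\Phi_0\rvert\sim\overline{m}$ and $e^{-\frac12\beta\rho}$ is bounded above and below by constants, so the two norms are equivalent up to powers of $\overline{m}$. The derivatives of the cutoffs, of size $\varepsilon_i^{-1}=O(\overline{m}^{1/2})$, are absorbed in the same way, all into the $\overline{m}$-dependent constant $c$.
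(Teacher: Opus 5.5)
Your proof is correct for the lemma as stated and follows essentially the paper's proof: the pointwise bound $\lvert Q(x,y)\rvert\le C\lvert x\rvert\lvert y\rvert$, separate H\"older--Sobolev estimates on $\bigsqcup_i B_{2\varepsilon_i}(p_i)$ and on the exterior, and a constant $c$ that may depend on $\overline{m}$. The differences are only in execution.

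On the exterior, the paper splits $Q$ into $\parallel$ and $\perp$ parts (using $Q(x^\parallel,y^\parallel)=0$). It then uses H\"older with exponents $3,\tfrac32$, the embedding $L^2_{1,\beta}\hookrightarrow L^6_{0,\beta}$, and the interpolation $\lVert f\rVert_{L^3_{0,\beta}}\le C\lVert f\rVert^{1/2}_{L^6_{0,\beta}}\lVert f\rVert^{1/2}_{L^2_{0,\beta}}$. Your $L^4$--$L^4$ H\"older with $L^2_{1,\beta}\hookrightarrow L^4_{0,\beta}$ is shorter and shows the splitting is unnecessary: $\lvert\Phi_0\rvert\ge\overline{m}/2\ge 1$ already makes the $W^{1,2}_\beta$-norm dominate $L^2_{1,\beta}$ on both components. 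Your unit-cylinder check of the weight convention is also right.

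On the balls, the paper does not rescale. It writes $\overline{m}^{-2}\lvert x\rvert\lvert y\rvert=(\overline{m}^{-1}\lvert x\rvert)(\overline{m}^{-1}\lvert y\rvert)$ and applies $L^2_1\hookrightarrow L^4$ on $B_{2\varepsilon_i}(p_i)$ directly. It absorbs into $c$ both the loss $\lVert\overline{m}^{-1}x\rVert_{L^2_1}\le\overline{m}\lVert x\rVert_{W^{1,2}_\beta}$ and the Sobolev constant of the small ball.

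Two of your side claims need correcting, though neither affects the lemma.

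\textbf{The rescaling claim.} Rescaling does not turn the ball part of the norm into the unweighted $L^2_1$-norm. With $\tilde g=\overline{m}^2 g$ and $\lvert\tilde x\rvert_{\tilde g}=\overline{m}^{-1}\lvert x\rvert_g$, one gets $\lVert\tilde x\rVert^2_{L^2_1(\tilde g)}=\overline{m}\lVert x\rVert^2_{L^2}+\overline{m}^{-1}\lVert\nabla_{A_0}x\rVert^2_{L^2}$. That is $\overline{m}^{5}$ times the zeroth-order piece of $\lVert x\rVert^2_{W^{1,2}_\beta}$ plus $\overline{m}$ times its first-order piece. So your route gives $c$ of order $\overline{m}^{7/2}$, not a factor like $\overline{m}^{1/2}$. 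This is allowed here. However, do not carry the $\overline{m}^{1/2}$ figure into the contraction step, which needs $10\,l\,\lVert e_0\rVert_{W^{0,2}_\beta}<1$ with $\lVert e_0\rVert_{W^{0,2}_\beta}=O(\overline{m}^{-7/4})$; a constant of order $\overline{m}^{7/2}$ would break that. The paper's own ball estimate also produces a positive power of $\overline{m}$.

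\textbf{The interface.} The interface is not an obstacle. $\lVert Q(x,y)\rVert^2_{W^{0,2}_\beta}$ is a sum of integrals over the two regions. The Sobolev inequality on each ball, and on the exterior domain (smooth boundary, with extension across the small holes), applies to the restrictions of $x$ and $y$. Those restrictions are controlled by the corresponding pieces of $\lVert x\rVert_{W^{1,2}_\beta}$ and $\lVert y\rVert_{W^{1,2}_\beta}$. So no partition of unity is needed, and this is how the paper's two cases ``$x,y$ both supported in'' each region should be read.
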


\begin{proof}
    $Q(x,y)$ can be divided into two parts: one supported in $\bigsqcup\limits_{i=1}^n B_{2\varepsilon_i}(p_i)$, another in $M \setminus \overline{\bigsqcup\limits_{i=1}^n B_{2\varepsilon_i}(p_i)}$.
    \begin{enumerate}[label=\textbf{(\arabic*)},listparindent=\parindent]
        \item \textbf{On $\bigsqcup\limits_{i=1}^n B_{2\varepsilon_i}(p_i)$.}

        If $x,y$ are both supported in $\bigsqcup\limits_{i=1}^nB_{2\varepsilon_i}(p_i)$, we have       \begin{align*}
            \Vert Q(x,y) \Vert_{W^{0,2}_{\beta}} &
            = (\int_{\bigsqcup\limits_{i=1}^nB_{2\varepsilon_i}(p_i)} \lvert W_0 Q(x,y) \rvert_g^2 \vol_g)^{\frac{1}{2}} \qquad (W_0=\overline{m}^{-2}) \\
            &\leq c[\int_{\bigsqcup\limits_{i=1}^nB_{2\varepsilon_i}(p_i)} (\overline{m}^{-1}\lvert x \rvert_g)^2 (\overline{m}^{-1}\lvert y \rvert_g)^2 \vol_g]^{\frac{1}{2}}  \qquad (\lvert Q(x,y) \rvert \leq c\lvert x \rvert \lvert y \rvert)\\
            &\leq c \Vert \overline{m}^{-1} x \Vert_{L^4} \Vert \overline{m}^{-1} y\Vert_{L^4} \qquad (\text{H\"older inequality}) \\
            &\leq c \Vert \overline{m}^{-1}x \Vert_{L^2_1} \Vert \overline{m}^{-1}y \Vert_{L^2_1} \qquad (L^2_1 \hookrightarrow L^4) \\
            &\leq c \Vert x \Vert_{W^{1,2}_{\beta}} \Vert y \Vert_{W^{1,2}_{\beta}}.
        \end{align*}

        \item \textbf{On $M \setminus \overline{\bigsqcup\limits_{i=1}^n B_{2\varepsilon_i}(p_i)}$.}

        If $x,y$ are both supported in $M \setminus \overline{\bigsqcup\limits_{i=1}^n B_{2\varepsilon_i}(p_i)}$, we have the following decomposition with respect to $\mathfrak{g}_P|_{M \setminus \bigsqcup\limits_{i=1}^nB_{\varepsilon_i}(p_i)} = \underline{\R} \oplus L$,
        \begin{equation*}
            Q(x,y)=[Q(x^\perp,y^\parallel)+Q(x^\parallel,y^\perp) ] + Q(x^\perp,y^\perp),
        \end{equation*}
        where the first two terms are valued in $\mathfrak{g}_P^\perp$, the third term is valued in $\mathfrak{g}_P^\parallel$, and the term $Q(x^\parallel,y^\parallel)$ vanishes.

        For the first term,
        \begin{align*}
            \Vert Q(x^\perp, y^\parallel) \Vert_{W^{0,2}_{\beta}}^2 &= \int_{M \setminus \overline{\bigsqcup\limits_{i=1}^n B_{2\varepsilon_i}(p_i)}}  \lvert W_0 Q(x^\perp, y^\parallel) \rvert_g^2 \vol_g \qquad (W_0=e^{-\frac{1}{2}\beta\rho}) \\
            &\leq c\int_{M \setminus \overline{\bigsqcup\limits_{i=1}^n B_{2\varepsilon_i}(p_i)}} (e^{-\frac{1}{3}\beta\rho}\lvert x^\perp \rvert_g^2) (e^{-\frac{2}{3}\beta\rho}\lvert y^\parallel \rvert_g^2) \vol_g \qquad (\lvert Q(x^\perp,y^\parallel) \rvert\leq \lvert x^\perp \rvert \lvert y^\parallel \rvert)\\
            &\leq c [\int_{M \setminus \overline{\bigsqcup\limits_{i=1}^n B_{2\varepsilon_i}(p_i)}} (e^{-\frac{1}{3}\beta\rho}\lvert x^\perp \rvert^2_g)^3 \vol_g]^\frac{1}{3}  [\int_{M \setminus \overline{\bigsqcup\limits_{i=1}^n B_{2\varepsilon_i}(p_i)}} (e^{-\frac{2}{3}\beta\rho}\lvert y^\parallel \rvert^2_g)^\frac{3}{2} \vol_g]^\frac{2}{3} \\ &\quad \hspace{24em} (\text{H\"older inequality})\\
            &\leq c\Vert e^{-\frac{1}{3}\beta\rho} \lvert x^\perp \rvert^2_g\Vert_{L^3} \Vert e^{-\frac{2}{3}\beta\rho} \lvert y^\parallel \rvert_g^2\Vert_{L^{\frac{3}{2}}}, 
        \end{align*}
        where
        \begin{align*}
            \Vert e^{-\frac{1}{3}\beta\rho} \lvert x^\perp \rvert_g^2 \Vert_{L^3}^3 = \int_{M \setminus \overline{\bigsqcup\limits_{i=1}^n B_{2\varepsilon_i}(p_i)}} e^{-\beta\rho} \lvert x^\perp \rvert^6_g \vol_g = \Vert x^\perp \Vert^6_{L^6_{0,\beta}} &\implies  \Vert e^{-\frac{1}{3}\beta\rho}\lvert x^\perp \rvert_g^2 \Vert_{L^3} = \Vert x^\perp \Vert^2_{L^6_{0,\beta}}, \\
            \Vert e^{-\frac{2}{3}\beta\rho}\lvert y^\parallel \rvert_g^2 \Vert_{L^\frac{3}{2}}^{\frac{3}{2}} = \int_{M \setminus \overline{\bigsqcup\limits_{i=1}^n B_{2\varepsilon_i}(p_i)}} e^{-\beta\rho} \lvert y^\parallel \rvert^3 \vol_g = \Vert y^\parallel \Vert^3_{L^3_{0,\beta}} &\implies  \Vert e^{-\frac{2}{3}\beta\rho}\lvert y^\parallel \rvert_g^2 \Vert_{L^\frac{3}{2}} = \Vert y^\parallel \Vert^2_{L^3_{0,\beta}}.
        \end{align*}
        Furthermore, we have
        \begin{align*}
            \Vert x^\perp \Vert_{W^{1,2}_{\beta}} &=[\int_{M \setminus \overline{\bigsqcup\limits_{i=1}^n B_{2\varepsilon_i}(p_i)}} (\vert W_0 \Phi_0 x^\perp \rvert_g^2 +\lvert W_1\nabla_{A_0} x^\perp \rvert^2_g) \vol_g]^\frac{1}{2} \\
            &\geq c[\int_{M \setminus \overline{\bigsqcup\limits_{i=1}^n B_{2\varepsilon_i}(p_i)}} e^{-\beta\rho} (\lvert x^\perp \rvert^2_g + \lvert \nabla_{A_0}x^\perp \rvert^2_g ) \vol_g]^\frac{1}{2} \qquad (\lvert \Phi_0 \rvert \geq \frac{\overline{m}}{2}) \\
            &= c\Vert x^\perp \Vert_{L^2_{1,\beta}} \\
            &\geq c\Vert x^\perp \Vert_{L^6_{0,\beta}} \qquad (L^2_{1,\beta} \hookrightarrow L^6_{0,\beta} \text{ in Theorem~\ref{Theorem:weighted sobolev embedding theorem}}),
        \end{align*}
        \begin{align*}
            \Vert y^\parallel \Vert_{W^{1,2}_{\beta}} &= [\int_{M \setminus \overline{\bigsqcup\limits_{i=1}^n B_{2\varepsilon_i}(p_i)}} (\lvert W_0y^\parallel \rvert_g^2 + \lvert W_1  \nabla_{A_0} y^\parallel \rvert^2_g) \vol_g]^\frac{1}{2} \\
            &\geq c[\int_{M \setminus \overline{\bigsqcup\limits_{i=1}^n B_{2\varepsilon_i}(p_i)}} e^{-\beta\rho}(\lvert y^\parallel \rvert^2_g + \lvert \nabla_{A_0} y^\parallel \rvert^2_g) \vol_g]^\frac{1}{2} \\
            &= c \Vert y^\parallel \Vert_{L^2_{1,\beta}} \\
            &\geq c \Vert y^\parallel \Vert_{L^6_{0,\beta}} \qquad (L^2_{1,\beta} \hookrightarrow L^6_{0,\beta} \text{ in Theorem~\ref{Theorem:weighted sobolev embedding theorem}}),
        \end{align*}
        and
        \begin{align*}
            \Vert y^\parallel \Vert_{W^{1,2}_{\beta}} &= [\int_{M \setminus \overline{\bigsqcup\limits_{i=1}^n B_{2\varepsilon_i}(p_i)}}  (\lvert W_0 y^\parallel \rvert^2_g + \lvert W_1 \nabla_{A_0} y^\parallel \rvert^2_g) \vol_g]^{\frac{1}{2}} \\
            &\geq c(\int_{M \setminus \overline{\bigsqcup\limits_{i=1}^n B_{2\varepsilon_i}(p_i)}} e^{-\beta\rho} \lvert y^\parallel \rvert^2_g \vol_g)^{\frac{1}{2}}  \\
            &= c \Vert y^\parallel \Vert_{L^2_{0,\beta}},
        \end{align*}
        so that
        \begin{align*}
            \Vert Q(x^\perp , y^\parallel) \Vert_{W^{0,2}_{\beta}}^2 &\leq c\Vert e^{-\frac{1}{3}\beta\rho} \lvert x^\perp \rvert^2_g \Vert_{L^3} \Vert e^{-\frac{2}{3}\beta\rho} \lvert y^\parallel \rvert^2_g \Vert_{L^\frac{3}{2}} \\
            &= c \Vert x^\perp \Vert_{L^6_{0,\beta}}^2 \Vert y^\parallel \Vert_{L^3_{0,\beta}}^2 \\
            &\leq c \Vert x^\perp \Vert_{L^6_{0,\beta}}^2 \Vert y^\parallel \Vert_{L^6_{0,\beta}} \Vert y^\parallel \Vert_{L^2_{0,\beta}} \qquad (\Vert f \Vert_{L^3_{0,\beta}} \leq C\Vert f \Vert_{L^6_{0,\beta}}^{\frac{1}{2}} \Vert f \Vert_{L^2_{0,\beta}}^\frac{1}{2}) \\
            &\leq c \Vert x^\perp \Vert_{W^{1,2}_{\beta}}^2 \Vert y^\parallel \Vert_{W^{1,2}_{\beta}}^2.
        \end{align*}
        The second and third terms can be proved similarly. For instance,
        \begin{align*}
            \Vert Q(x^\perp, y^\perp) \Vert_{W^{0,2}_{\beta}}^2 
            &\leq C \Vert x^\perp \Vert_{L^6_{0,\beta}}^2 \Vert y^\perp \Vert_{L^3_{0,\beta}}^2 \\
            &\leq C \Vert x^\perp \Vert_{L^6_{0,\beta}}^2 \Vert y^\perp \Vert_{L^6_{0,\beta}} \Vert y^\perp \Vert_{L^2_{0,\beta}} \\
            &\leq C \Vert x^\perp \Vert_{W^{1,2}_{\beta}}^2 \Vert y^\perp \Vert_{W^{1,2}_{\beta}}^2.
        \end{align*}
    \end{enumerate}
\end{proof}

\begin{proof}[Proof of Theorem~\ref{theorem: main theorem}]
        In order to apply the contraction mapping theorem~\ref{theorem: contraction mapping theorem}, we set $B=W^{0,2}_{\beta}$, $q(f)=Q(d^{-1}_2 f,d_2^{-1}f)$, and $e=-e_0$. $M$ has a single end, implies that, for sufficiently large $\overline{m}$, the average mass coincides with prescribed mass $m$.

        Since $d_2: W_{\beta}^{1,2} \to W_{\beta}^{0,2}$ is surjective with a bounded right inverse $d_2^{-1}$, we have
        \begin{align*}
        \Vert Q(d_2^{-1}f,d_2^{-1}f)- Q(d_2^{-1}g,d_2^{-1}g) \Vert_{W^{0,2}_{\beta}}
        &= \Vert Q(d_2^{-1}f+d_2^{-1}g, d_2^{-1}f-d_2^{-1}g) \Vert_{W^{0,2}_{\beta}} \\
        &\leq c\Vert d_2^{-1}(f+g)\Vert_{W^{1,2}_{{\beta}}} \cdot \Vert d_2^{-1}(f-g) \Vert_{W^{1,2}_{{\beta}}}\qquad (\text{Lemma~\ref{lemma: quadratic term estimation}}) \\
        &\leq l (\Vert f \Vert_{W^{0,2}_{\beta}} + \Vert g \Vert_{W^{0,2}_{\beta}}) \Vert f-g \Vert_{W^{0,2}_{\beta}},
    \end{align*}
    where $l>0$ is a constant.
    
    When $m$ is sufficiently large, a direct computation
    \begin{align*}
        \Vert e_0 \Vert_{W^{0,2}_{\beta}}^2 &= \int_{\bigsqcup\limits_{i=1}^n B_{2\varepsilon_i}(p_i)} \lvert W_0 e_0 \rvert^2_g \vol_g \\ 
        &= \int_{\bigsqcup\limits_{i=1}^n B_{2\varepsilon_i}(p_i)} {m}^{-4} \lvert  e_0 \rvert^2_g \vol_g \\
        &= {m}^{-4}\cdot O({m}^2) \cdot O((\varepsilon_i)^3) \qquad (\text{Equation}~\eqref{equation:|e0|g=o(overline m)})\\
        &= O({m}^{-\frac{7}{2}})
        \end{align*}
        gives $\Vert e_0 \Vert_{W^{0,2}_{\beta}} \leq \frac{1}{10 l}$ for the above $l>0$.

        The contraction mapping theorem~\ref{theorem: contraction mapping theorem} implies that the equation~\eqref{equation:f+q(f)=e} has a unique solution $f=d_2(a,\varphi)$ satisfying
        \begin{equation*}
            \Vert f \Vert_{W^{0,2}_{\beta}} \leq 2 \Vert -e_0 \Vert_{W^{0,2}_{\beta}} \leq C{m}^{-\frac{7}{4}},
        \end{equation*}
        hence,
        \begin{equation*}
            \Vert (a,\varphi) \Vert_{W^{1,2}_{\beta}} =\Vert d_2^{-1} f \Vert_{W^{1,2}_{\beta}} \leq C \Vert f \Vert_{W^{0,2}_{\beta}} \leq C {m}^{-\frac{7}{4}}.
        \end{equation*}
\end{proof}

\bibliographystyle{alpha}
\bibliography{ref}

\end{document}